\documentclass[a4paper,reqno,12pt]{amsart}

\usepackage[utf8]{inputenc}
\usepackage{microtype}
\usepackage{amsfonts}
\usepackage{amsmath}
\usepackage{graphicx}

% AMS related packages:
 
\usepackage{amssymb}
\usepackage{mathrsfs}
\usepackage{mathtools}
\usepackage{enumitem}
\usepackage{comment}

\usepackage{comment}
\usepackage{color}
\usepackage{hyperref}
\usepackage{tikz}
\usetikzlibrary{arrows}
\usepackage{tikz-cd}

% Layout
\usepackage{a4wide}

\usepackage{empheq}
\numberwithin{equation}{section}

% Theorem definitions:

\newtheorem{theorem}{Theorem}[section]
\newtheorem{lemma}[theorem]{Lemma}
\newtheorem{proposition}[theorem]{Proposition}
\newtheorem{corollary}[theorem]{Corollary}

\newtheorem{observation}[theorem]{Observation}
\newtheorem*{theorem*}{Theorem}
\newtheorem*{lemma*}{Lemma}

\theoremstyle{definition}
\newtheorem{definition}[theorem]{Definition}
\newtheorem{remark}[theorem]{Remark}

\DeclareMathOperator{\Stab}{\mathrm{Stab}}
\DeclareMathOperator{\rist}{\mathrm{Rist}}

\DeclareMathOperator{\SL}{\mathrm{SL}}

\DeclareMathOperator{\Aut}{\mathrm{Aut}}

\usepackage{ifthen}
\usepackage{xargs}
\newcommand{\optionalarg}[2]{
\ifthenelse{\equal{#2}{}}{%
#1}{%
#1(#2)}
}

%
% ====================================================================
%

\setlength{\parskip}{5pt}
\begin{document}

\title[The congruence subgroup problem for a family of branch groups]{The congruence subgroup problem for a family of branch groups}
\date{\today}

\subjclass[2010]{Primary 20E08, 20F65, 20E26; Secondary 20E18}
\keywords{profinite completions, branch groups, self-similar groups, congruence subgroup problem}

\author[Rachel Skipper]{Rachel Skipper}
\address{Department of Mathematics, The Ohio State University, Columbus, Ohio}
\email{skipper.26@osu.edu}
\thanks{The author gratefully acknowledges support from the GIF, grant I-198-304.1-2015 ``Geometric exponents of random walks and intermediate growth groups" and from the Simons Foundation, Grant \#245855 to Marcin Mazur. The author would also like to thank the referee for their careful reading of the paper and many useful comments which helped improve the exposition.}

\begin{abstract}
We construct a family of groups which generalize the Hanoi towers group and study the congruence subgroup problem for the groups in this family. We show that unlike the Hanoi towers group, the groups in this generalization are just infinite and have a trivial rigid kernel. We also put strict bounds on the branch kernel. Additionally, we show that these groups have subgroups of finite index with non-trivial rigid kernel. The only previously known group where this kernel is non-trivial is the Hanoi towers group and so this adds infinitely many new examples. Finally, we show that the topological closures of these groups have Hausdorff dimension arbitrarily close to 1.
\end{abstract}

\maketitle
\thispagestyle{empty}

%--------------------------------------------------------------------
\section*{Introduction}\label{sec:intro}

Branch groups, and more generally groups acting on rooted trees, have been well-studied in recent years as a result of the exotic properties groups in this class can possess. A primary example of this is the Grigorchuk group which was the first group shown to be amenable but not elementary amenable and also the first group shown to have intermediate growth, answering longstanding open questions. Additionally, the Grigorchuk group, followed shortly thereafter by the Gupta-Sidki $p$-groups, provided explicit and tractable examples of Burnside groups, i.e. finitely generated infinite torsion groups. Branch groups also arise in the classification of just infinite groups which serve as the analogue of simple groups for the class of residually finite groups.

A group $G$ acting on a rooted tree has the congruence subgroup property if each subgroup of finite index contains the pointwise stabilizer of the vertices on some level of the tree, a subgroup naturally arising from the tree structure. This property parallels the classical property of the same name for subgroups of $\SL(n, \mathbb{Z})$ for $n>2$. Since the full automorphism group of the tree, $\Aut(\mathcal{T})$, is itself a profinite group, determining whether or not $G$ has the congruence subgroup property amounts to comparing the profinite completion $\widehat{G}$ to the topological closure $\overline{G}$ as a subgroup of $\Aut(\mathcal{T})$ and determining if the congruence kernel, the kernel of the natural surjection $\widehat{G}\twoheadrightarrow \overline{G}$, is trivial. This kernel serves as the measure of the error in studying the group by looking only at the quotients coming from the level stabilizers as opposed to considering all finite quotients.

For branch groups, there exists another naturally occurring family of finite index subgroups, namely the rigid stabilizers defined in Section \ref{sec:groups}. This additional family of subgroups reduces the congruence subgroup problem to separately determining the branch kernel, the kernel of the map $\widehat{G}\twoheadrightarrow \widetilde{G}$, and the rigid kernel, the kernel of the map $\widetilde{G}\twoheadrightarrow \overline{G}$, where $\widetilde{G}$ is the topological completion of the $G$ with respect to the rigid stabilizers.

Many of the most studied branch groups have been shown to have a trivial congruence kernel including the Fabrykowski-Gupta group and the Gupta-Sidki groups \cite{BGS03}, \cite{Gar16b}, the Grigorchuk group and an infinite family of generalizations of the Fabrykowski-Gupta group \cite{Gri00}, and Multi-GGS groups with non-constant accompanying vector \cite{Per07}, \cite{FGU17}, \cite{GUA}.

Pervova \cite{Per07} constructed the first branch groups without the congruence subgroup property. Nevertheless, the groups in her infinite family, periodic EGS groups with non-symmetric accompanying vector, have a non-trivial branch kernel but a trivial rigid kernel. Likewise, the twisted twin of the Grigorchuk group  was found to have a non-trivial branch kernel but a trivial rigid kernel \cite{BS09}.

Even with the existence of infinite families of groups having either a trivial branch and a trivial rigid kernel or a non-trivial branch kernel but a trivial rigid kernel, only one group appearing in the literature has been shown to have a non-trivial rigid kernel. It is the Hanoi towers group on three pegs \cite{BSZ12}, \cite{Ski19}, which we refer to as $G_3$. In this paper, we study a family of generalizations of the Hanoi towers group, $\{G_n \mid n \geq 3 \}$, and show that unlike the Hanoi towers group, the group $G_n$ has a trivial rigid kernel whenever $n\geq 4$. We compute the rigid and level stabilizers and fully compute the congruence kernel for many $n$, placing strict bounds on the kernel for the remaining $n$. Some of the results are proved in three parts since the structure of $G_n$ partially depends on $n$. Although the higher groups do have trivial rigid kernels, we nevertheless find new examples of branch groups having non-trivial rigid kernels coming from certain finite index subgroups of the $G_n$. This adds infinitely many new examples to the only previously known example of the Hanoi towers group.

We remark that the group $G_4$ was studied briefly in \cite{sie09}, but a subtle overgeneralization in the hypotheses of earlier theorems led to some incorrect conclusions. 

We show the following main theorems.

\begin{theorem*}[\ref{thm:4max}, \ref{thm:oddrist}, \ref{thm:evensubgroups}]
The rigid kernel for $G_n$ is trivial if and only if $n\neq 3$. 
\end{theorem*}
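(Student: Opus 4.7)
The rigid kernel of $G_n$ is by definition the kernel of the natural surjection $\widetilde{G}_n \twoheadrightarrow \overline{G}_n$, and unwinding the definitions one can rewrite it as the inverse limit of $\Stab_{G_n}(k)/\mathrm{Rist}_{G_n}(k)$ over $k$, where $\mathrm{Rist}_{G_n}(k)$ denotes the rigid stabilizer of level $k$. Since rigid stabilizers have finite index in level stabilizers, this inverse limit is trivial precisely when, for every $k$, there exists $m\geq k$ with $\Stab_{G_n}(m) \subseteq \mathrm{Rist}_{G_n}(k)$. My plan is therefore to refute this saturation criterion for $n=3$ and to verify it for each $n\geq 4$.

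For $n=3$, nontriviality of the rigid kernel of the Hanoi towers group is already established in \cite{BSZ12} and \cite{Ski16}, so this direction is handled by citation. For $n\geq 4$ the strategy splits along the cases $n=4$, odd $n\geq 5$, and even $n\geq 6$, reflecting the three referenced theorems. The common approach is first to compute, using self-similarity and the explicit generators, a workable description of $\mathrm{Rist}_{G_n}(v)$ at a level-one vertex $v$, and then to leverage that description to exhibit, for each $k$, a level $m$ such that every element of $\Stab_{G_n}(m)$ may be written as a product of elements each supported on a single subtree rooted at a vertex of level $k$. Theorem~\ref{thm:4max} would carry out this program for $n=4$, the key ingredient being the maximality of a particular naturally occurring subgroup. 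Theorem~\ref{thm:oddrist} would supply for odd $n\geq 5$ a precise enough description of $\mathrm{Rist}_{G_n}$ that the saturation step follows from a direct self-similar decomposition of stabilizer elements; Theorem~\ref{thm:evensubgroups} plays the analogous role for even $n\geq 6$.

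The main technical obstacle, and the reason three separate theorems are required, is the explicit identification of generators of $\mathrm{Rist}_{G_n}(v)$ together with the decomposition of elements of $\Stab_{G_n}(m)$ as products of such rigid stabilizers. These are the commutator and word computations that typically dominate congruence-subgroup analyses in branch groups. The combinatorics depend sensitively on the parity of $n$, and in the even case the small example $n=4$ behaves differently from $n\geq 6$, which forces the three-case split. I expect the most delicate part to be the odd-$n$ case, where a uniform word calculation must be produced simultaneously for the entire infinite subfamily.
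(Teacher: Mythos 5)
Your reduction is sound and matches the paper's architecture exactly: the rigid kernel is $\varprojlim_k \Stab_{G_n}(k)/\mathrm{Rist}_{G_n}(k)$, it vanishes precisely when every $\mathrm{Rist}_{G_n}(k)$ contains some $\Stab_{G_n}(m)$, the $n=3$ direction is by citation to \cite{BSZ12} and \cite{Ski16}, and the $n\geq 4$ direction splits into the three cases $n=4$, odd $n\geq 5$, and even $n\geq 6$. The paper realizes your ``saturation'' step in the sharpest possible form, $m=k+1$: in each case it identifies the maximal branching subgroup $K$ with $\mathrm{Rist}_{G_n}(k)=X^k*K$ and shows $\Stab_{G_n}(k+1)=X^{k}*\Stab_{G_n}(1)\leq X^{k}*K$.

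The gap is that everything of substance is deferred: you never identify the rigid stabilizers nor verify the containment, and these are not routine. Concretely, the paper's proof rests on (i) Proposition \ref{prop:elementinverse}, that every tuple $(1,\dots,g,\dots,g^{-1},\dots,1)_1$ lies in $G_n'$ when $n\geq 4$ --- this is the statement that \emph{fails} for $n=3$ and is the structural reason the rigid kernel becomes trivial, so any complete proof must isolate it; (ii) for $n\geq 5$, an application of Goursat's Lemma to $G_n/\langle I_n, X*G_n'\rangle$ viewed as a subdirect product of $(\mathbb{Z}/(n-1)\mathbb{Z})^n$ with $S_n$ (odd $n$) or $A_n$ (even $n$), which pins down $\Stab_{G_n}(1)$ via the exponent-sum homomorphism $\epsilon$; and (iii) for $n=4$, a Reidemeister--Schreier computation of $\Stab_{G_4}(1)$ and explicit words exhibiting the index-$3$ subgroup $K_4$ as a branching subgroup. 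Your anticipation of where the difficulty lies is also inverted: the odd $n\geq 5$ case is handled uniformly and quickly by $\epsilon$ and Goursat, the even $n\geq 6$ case is the cleanest ($G_n$ is layered and $\Stab_{G_n}(m)=\mathrm{Rist}_{G_n}(m)=X^m*G_n$ exactly, so $\overline{G_n}$ is the iterated wreath power of $A_n$), and it is $n=4$ that demands the bespoke computation. As written, your argument establishes only the framework plus the $n=3$ direction; the ``if'' direction is asserted rather than proved.
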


\begin{theorem*}[\ref{thm:branchkernel}]
For $n \neq 3$, the branch kernel, and thus the congruence kernel, for $G_n$ is the inverse limit
\[\varprojlim_{m\geq 1} M_n^m\]
 where $M_n$ is a finite abelian group. When $n\geq 5$ is even, $M_n$ is cyclic of order $(n-1)$ and when $n=4$ or $n\geq 5$ is odd, $M_n$ has exponent bounded between $(n-1)$ and $2(n-1)$.
\end{theorem*}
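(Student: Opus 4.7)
The plan is to combine the general machinery for computing branch kernels of regular branch groups with the triviality of the rigid kernel already established for $n \neq 3$. The second assertion of the theorem, that the branch kernel agrees with the congruence kernel, follows at once from the short exact sequence
$$1 \to \ker(\widehat{G_n} \twoheadrightarrow \widetilde{G_n}) \to \ker(\widehat{G_n} \twoheadrightarrow \overline{G_n}) \to \ker(\widetilde{G_n} \twoheadrightarrow \overline{G_n}) \to 1$$
and the preceding theorem's conclusion that the rightmost term vanishes, so I focus on the branch kernel alone.

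My first step would be to isolate a finite-index normal subgroup $K_n \trianglelefteq G_n$ over which $G_n$ is regular branch: concretely, a commutator-type subgroup whose iterated self-embedding $K_n \times \cdots \times K_n \hookrightarrow K_n$ into the $n$ subtrees beneath the root has finite index. With such a $K_n$ in hand, the rigid stabilizer at level $m$ contains a product of copies of $K_n$ (one per level-$m$ vertex) with finite index, and standard results (following Bartholdi--Siegenthaler--Zalesskii and the exposition in Garrido's work) identify the branch kernel with an inverse limit of finite abelian groups built from $K_n$ modulo its commutators with $G_n$. After unwinding the self-similar structure, one obtains a presentation of the branch kernel as $\varprojlim_m M_n^m$, where $M_n$ is a single finite abelian quotient of $K_n$, morally $K_n / \bigl([G_n, K_n] \cdot K_n'\bigr)$, and the transition maps are induced by the branching embedding.

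The remaining task is to identify $M_n$. I would work with the generators of $G_n$ and the induced relations on $K_n^{\mathrm{ab}}$ viewed as a $G_n/K_n$-module. For even $n \geq 5$, there is sufficient symmetry in the underlying permutation action that the induced action collapses $K_n^{\mathrm{ab}}$ to a cyclic quotient of order exactly $(n-1)$, giving $M_n \cong \Z/(n-1)\Z$. For $n = 4$ and odd $n \geq 5$, this symmetry fails and one instead brackets $M_n$ between the obvious cyclic quotient of order $(n-1)$, which survives in all cases and supplies the lower bound on the exponent, and an ambient group in which the residual parity ambiguity contributes at most an additional factor of $2$, yielding the upper bound $2(n-1)$.

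The main obstacle will be the exact computation of $M_n$ in the cases where only bounds are claimed. The symmetry argument that makes the even-$n$ case clean is absent, so one must work directly with a presentation of $K_n^{\mathrm{ab}}$ and verify that the relations coming from the self-similar structure are just strong enough to yield the lower bound while not being strong enough to force a collapse beyond a factor of $2$. Closing this gap precisely amounts to determining whether a specific $2$-torsion class in the candidate for $M_n$ is trivial, and this is exactly the step I expect to require the most delicate bookkeeping with the conjugation action of the generators of $G_n$ on $K_n$.
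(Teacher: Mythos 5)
Your overall strategy coincides with the paper's: the rigid kernel is trivial for $n\neq 3$, so the congruence and branch kernels agree; the branch kernel is read off from the Bartholdi--Siegenthaler--Zalesskii formula $\varprojlim_{e,m} Rist_{G_n}(m)/Rist_{G_n}(m)'Rist_{G_n}(m)^e$; and the rigid stabilizers are products of copies of the maximal branching subgroup $K_n$ computed earlier, so everything reduces to the abelianization of $K_n$. Your lower bound argument is also the paper's: $K_n$ contains $G_n'$ with index $d$ in $G_n$ ($d=3$ for $n=4$, $d=2$ for odd $n\geq 5$), hence surjects onto an index-$d$ subgroup of $G_n/G_n'\cong(\Z/(n-1)\Z)^n$, which still has exponent $n-1$.

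However, the step you defer as ``delicate bookkeeping'' --- the upper bound $2(n-1)$ --- is where the actual content lies, and it does not require tracking a $2$-torsion class or the conjugation module structure at all. The paper's argument is a one-liner: $K_n$ is normally generated by products of two generators, e.g.\ $\{a_{n-1}a_1,\,a_na_2,\,a_ia_{i+2}\}$ for odd $n$ (and the analogous elements for $n=4$), and one checks directly that each such element has order $2(n-1)$ in $G_n$ (order $6$ for $n=4$). Since conjugation preserves order, $K_n$ is generated by elements of order $2(n-1)$, so the abelian group $K_n/K_n'$ has exponent dividing $2(n-1)$. Two further corrections. First, $M_n$ is $K_n/K_n'$ on the nose, not the coinvariants $K_n/([G_n,K_n]K_n')$ you propose: the earlier theorems give the exact equality $Rist_{G_n}(m)=X^m*K_n$ (not merely finite index), so $Rist_{G_n}(m)/Rist_{G_n}(m)'\cong(K_n/K_n')^{n^m}$ with no $G_n$-action entering, and the double limit over $e$ collapses precisely because this group has finite exponent --- a cofinality point you omit. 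Second, for even $n\geq 5$ the mechanism is not a symmetry collapse of $K_n^{\mathrm{ab}}$ to a cyclic group: there $G_n$ is layered, $K_n=G_n$, and $M_n=(\Z/(n-1)\Z)^n$; the ``cyclic of order $n-1$'' phrasing is only an artifact of reindexing the Cartesian powers in $\varprojlim_m M_n^m$.
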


A main tool in proving these theorems is understanding the abelianization of the rigid stabilizers. This knowledge also allows us to prove: 

\begin{theorem*}[\ref{thm:justinfinite}]
$G_n$ is just infinite if and only if $n\neq 3$.
\end{theorem*}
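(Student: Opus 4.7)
The plan is to invoke Grigorchuk's standard criterion for just infiniteness of branch groups: a branch group $H$ is just infinite if and only if, for every vertex $v$ of the tree, the abelianization $\mathrm{rist}_H(v)^{\mathrm{ab}}$ is finite (equivalently, for every level $m$ the rigid level stabilizer $\mathrm{Rist}_H(m)$ has finite abelianization). Because $G_n$ acts self-similarly on a regular rooted tree, this reduces to a statement about $\mathrm{rist}_{G_n}(v)^{\mathrm{ab}}$ for a single first-level vertex $v$, and so the present theorem will become a corollary of the rigid-stabilizer analysis already carried out elsewhere in the paper.

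For the direction $n\neq 3 \Rightarrow G_n$ is just infinite, I would appeal to the descriptions of $\mathrm{rist}_{G_n}(v)$ modulo its commutator subgroup furnished by Theorems \ref{thm:4max}, \ref{thm:oddrist}, and \ref{thm:evensubgroups}. Split into the cases dictated by the parity and size of $n$, those results supply a finite generating set of $\mathrm{rist}_{G_n}(v)^{\mathrm{ab}}$ together with finite orders on the generators, so $\mathrm{rist}_{G_n}(v)^{\mathrm{ab}}$ is a finite abelian group. Grigorchuk's criterion then forces every nontrivial normal subgroup of $G_n$ to contain some rigid level stabilizer, and since rigid level stabilizers have finite index, $G_n$ is just infinite.

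For the converse, I need to show that $G_3$ itself is not just infinite. Again by Grigorchuk's criterion, it is enough to exhibit an epimorphism from $\mathrm{rist}_{G_3}(v)$ onto an infinite abelian group. Using the three involutive Hanoi-tower generators $a_{01}, a_{02}, a_{12}$ and the self-similar embedding $\mathrm{rist}_{G_3}(v) \hookrightarrow G_3 \times G_3 \times G_3$, one identifies an element of $\mathrm{rist}_{G_3}(v)$ whose image in the abelianization has infinite order, producing a surjection onto $\mathbb{Z}$; alternatively, one can cite the known non-just-infiniteness of the Hanoi towers group from the literature. Either route yields a normal subgroup of $G_3$ of infinite index and rules out just infiniteness.

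The main obstacle lies not in the present theorem but in the rigid-stabilizer abelianization computations it consumes; once those are on the table, the result drops out of Grigorchuk's criterion. The one subtlety worth flagging in the write-up is the qualitative dichotomy between $n\neq 3$ (finite rigid-stabilizer abelianization) and $n=3$ (infinite rigid-stabilizer abelianization), since this is precisely the structural phenomenon that also distinguishes the Hanoi towers group from the rest of the family in the other main theorems, so presenting the two halves of the ``if and only if'' in parallel notation will make the source of the dichotomy transparent to the reader.
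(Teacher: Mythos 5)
Your proposal is correct and takes essentially the same route as the paper: both arguments invoke Grigorchuk's criterion that a branch group is just infinite if and only if each $Rist_G(m)/Rist_G(m)'$ is finite, obtain the $n\neq 3$ direction from the rigid-stabilizer identifications of Theorems \ref{thm:4max}, \ref{thm:oddrist}, and \ref{thm:evensubgroups} (the paper routes this through the proof of Theorem \ref{thm:branchkernel}, where the finiteness of $K_n/K_n'$ and $G_n/G_n'$ is established exactly as you suggest, via finite generation plus bounded exponent), and settle $n=3$ by citing the literature, namely that $G_3/G_3''$ is infinite. No substantive difference.
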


We show that the triviality of the rigid kernel is not necessarily inherited by finite index subgroups, even if they are maximal. In Section \ref{sec:wpandab}, we put a function $\epsilon$ on $G_n$ which is used in the next theorem.

\begin{theorem*}[\ref{thm:finiteindex}]
For $n\geq 4$, let $d >2$ be such that $d \mid (n-1)$ and let $H_{n,d}$ be the set of elements $g$ of $G_n$ with $\epsilon(g)\equiv 0 \bmod d$. Then $H_{n,d}$ is a subgroup of index $d$ in $G_n$ and is a regular branch group with a non-trivial rigid kernel.
\end{theorem*}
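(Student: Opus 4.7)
The plan is to split the proof into three parts. First, I would verify that $H_{n,d}$ is a normal subgroup of $G_n$ of index $d$. Since $\epsilon : G_n \to \Z/(n-1)\Z$ is the surjective group homomorphism constructed in Section \ref{sec:wpandab} and $d \mid (n-1)$, composition with the canonical projection gives a surjective homomorphism $\bar\epsilon : G_n \to \Z/d\Z$ whose kernel is exactly $H_{n,d}$.

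Second, to see that $H_{n,d}$ is branch, I observe that $\mathrm{rist}_{H_{n,d}}(v) = \mathrm{rist}_{G_n}(v) \cap H_{n,d}$ has index at most $d$ in $\mathrm{rist}_{G_n}(v)$. Since $G_n$ is branch, $\mathrm{rist}_{G_n}(v)$ already has finite index in $\Stab_{G_n}(v)$, so $\mathrm{rist}_{H_{n,d}}(v)$ has finite index in $\Stab_{H_{n,d}}(v) = \Stab_{G_n}(v) \cap H_{n,d}$. Taking direct products over level-$k$ vertices yields finite index of $\mathrm{Rist}_{H_{n,d}}(k)$ in $\Stab_{H_{n,d}}(k)$ for every $k$.

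Third, and most substantively, I would establish non-triviality of the rigid kernel. The key algebraic observation is the strict inclusion
\[ \mathrm{Rist}_{H_{n,d}}(k) = \prod_{v} \mathrm{rist}_{H_{n,d}}(v) \;\subsetneq\; H_{n,d} \cap \mathrm{Rist}_{G_n}(k), \]
since membership of a product $\prod_v g_v$ in $H_{n,d}$ only constrains $\sum_v \bar\epsilon(g_v) \equiv 0 \pmod d$, not each $\bar\epsilon(g_v) \equiv 0$. Concretely, for any two distinct level-$k$ vertices $v, w$ and any $g_v \in \mathrm{rist}_{G_n}(v)$, $g_w \in \mathrm{rist}_{G_n}(w)$ with $\bar\epsilon(g_v) = \bar\epsilon(g_w) \neq 0$, the element $g_v g_w^{-1}$ lies in $H_{n,d} \cap \mathrm{Rist}_{G_n}(k)$ but, by uniqueness of the direct-product decomposition, not in $\mathrm{Rist}_{H_{n,d}}(k)$. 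To derive non-triviality of the rigid kernel I would apply the standard criterion: fix $k_0 \geq 1$ and show that for every $m$ there exists an element of $\Stab_{H_{n,d}}(m)$ outside $\mathrm{Rist}_{H_{n,d}}(k_0)$. For each $m$, this is realized by choosing $g_v \in \mathrm{rist}_{G_n}(v) \cap \Stab_{G_n}(m)$ and $g_w \in \mathrm{rist}_{G_n}(w) \cap \Stab_{G_n}(m)$ with matching nonzero $\bar\epsilon$-values, so that $g_v g_w^{-1} \in \Stab_{H_{n,d}}(m) \setminus \mathrm{Rist}_{H_{n,d}}(k_0)$.

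The main obstacle is ensuring that such witness elements exist at arbitrary depth: one needs $\bar\epsilon$ to remain nontrivial on $\mathrm{rist}_{G_n}(v) \cap \Stab_{G_n}(m)$ for every $m$. I would derive this from the self-similar identification of $\mathrm{rist}_{G_n}(v)$ with a subgroup of $G_n$ acting on the subtree below $v$, reducing to the assertion that $\bar\epsilon$ is nontrivial on every level stabilizer $\Stab_{G_n}(j)$. This in turn should follow from the paper's explicit computations of the abelianizations of the rigid and level stabilizers, combined with the formula for $\epsilon$ on generators developed in Section \ref{sec:wpandab}.
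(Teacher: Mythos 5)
Your overall strategy is the same as the paper's. The paper's witnesses, $(\beta^2,1,\dots,1,\beta^{-2})_m$ for odd $n$ and $(\beta,1,\dots,1,\beta^{-1})_m$ for even $n$ with $\beta=a_1a_2\cdots a_n$, are exactly elements of your form $g_vg_w^{-1}$ with $g_v,g_w$ supported on disjoint subtrees and carrying equal nonzero $\bar\epsilon$-values, and the exclusion from $Rist_{H_{n,d}}(k)$ is argued by the same uniqueness-of-components observation you give. Your first two parts are fine (neither you nor the paper comments on level transitivity of $H_{n,d}$, which the definition of branch group requires; it follows from $G_n'\leq H_{n,d}$).

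The gap sits in the step you yourself flag as the main obstacle, and it is not only a deferral: the reduction you propose aims at the wrong statement. By Theorems \ref{thm:4max}, \ref{thm:oddrist} and \ref{thm:evensubgroups}, the image of $Rist_{G_n}(v)$ under the self-similar identification is the branching subgroup ($K_4$, $K_n$, or $G_n$), not all of $G_n$; so what you must prove is that $\bar\epsilon$ is nontrivial on $K_n\cap \Stab_{G_n}(j)$ for every $j$, which is strictly stronger than nontriviality on $\Stab_{G_n}(j)$. This is precisely where the hypothesis $d>2$ is consumed, and your sketch never invokes it: for odd $n$ one has $\epsilon(K_n)=2\Z/(n-1)\Z$, so the required $g_v$ exist only because $2\not\equiv 0\bmod d$; for $d=2$ no such $g_v$ exists, since $\epsilon$ is even on all of $K_n$. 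Once the claim is stated correctly it is easy to verify: for any vertex $u$ at level $j$, $u*\beta^2\in Rist_{G_n}(u)\leq K_n\cap\Stab_{G_n}(j)$ and $\epsilon(u*\beta^2)=\epsilon(\beta^2)\equiv 2 \bmod (n-1)$ by Lemma \ref{lem:epsilonequiv} (use $u*\beta$ and $\epsilon\equiv 1$ in the even case). This is exactly what the paper's explicit computation with $\beta$ supplies; with that step filled in, your argument closes.
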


The work leading up to the theorems in Section \ref{sec:comp} makes the Hausdorff dimension for the topological closure of $G_n$ straightforward to compute so we include it for completeness.
   
\begin{theorem*}[\ref{thm:haus}]
For $n\geq 3$, the Hausdorff dimension for $\overline{G_n}$ is
\[
  dim_{\mathrm{H}}(\overline{G_n}) =
  \begin{cases}
  	   \vspace{.2cm}
       1-\frac{\log(48)}{\log(331776)} & \text{if $n=4$} \\
       \vspace{.2cm}
       1-\frac{\log(2)}{\log(n!)} & \text{if $n\geq 5$ is even} \\
       1-\frac{\log(2)}{n\log(n!)} & \text{if $n$ is odd}
  \end{cases}
\]
\end{theorem*}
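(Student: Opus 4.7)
\emph{Proof strategy.} The plan is to apply the standard formula for the Hausdorff dimension of a closed subgroup of $\Aut(\mathcal{T}_n)$,
\[
\dim_H(\overline{G_n}) \;=\; \lim_{m\to\infty} \frac{\log\bigl|G_n/\Stab_{G_n}(m)\bigr|}{\log\bigl|\Aut(\mathcal{T}_n)/\Stab(m)\bigr|},
\]
whose denominator equals $\tfrac{n^m-1}{n-1}\log(n!)$. The task is therefore to pin down the asymptotic growth of the numerator with enough precision.

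I would start by invoking the regular-branch description of $G_n$ developed in Section~\ref{sec:comp}, together with the explicit form of the embedding $\psi: \Stab_{G_n}(1) \hookrightarrow G_n^n$. The key input is the same data that drove the proof of Theorem~\ref{thm:branchkernel}: the rigid-stabilizer abelianizations, together with the branching containment $K^n \leq \psi(K)$ for the branching subgroup $K$, pin down (up to a bounded correction) the ratio
\[
\frac{\bigl|\Stab_{G_n}(1)/\Stab_{G_n}(m+1)\bigr|}{\bigl|G_n/\Stab_{G_n}(m)\bigr|^n}.
\]
Combined with the initial value $[G_n:\Stab_{G_n}(1)]$, iterating this recursion yields an asymptotic of the form $\bigl|G_n/\Stab_{G_n}(m)\bigr| = c^{(n^m-1)/(n-1)}\cdot b(m)$ with $b(m)$ subexponential and $c$ an explicit constant determined by the obstruction; plugging into the limit then gives $\dim_H(\overline{G_n}) = \log c / \log(n!)$.

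The three cases in the statement correspond to three different obstruction patterns. When $n\geq 5$ is even the recursion stabilizes after a single level, with obstruction equal to a single $\Z/2$ per new vertex, yielding $c = n!/2$ and the dimension $1 - \log 2/\log(n!)$. When $n$ is odd, the $\Z/2$ obstruction has period $n$ rather than $1$ (reflecting that an odd-parity obstruction must descend through $n$ consecutive levels before stabilizing the recursion), which accounts for the extra factor $n$ in the denominator. For the distinguished case $n=4$ the recursion has period $4$ with an aggregated obstruction of $48 = 2\cdot 4!$ per four-level block; since $|\Aut(\mathcal{T}_4)/\Stab(m)|$ grows by a factor $(4!)^4 = 331776$ over each such block, the correction term is $\log 48/\log 331776$.

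The main obstacle will be the $n = 4$ case. As the introduction flags, a subtle overgeneralization caused incorrect conclusions in \cite{sie09}, so I will need to redo the first few levels of the stabilizer computation by hand using the explicit rigid-stabilizer data from Section~\ref{sec:comp}, rather than importing the earlier calculation, and verify that the per-block obstruction is exactly $48$. Once those indices are checked, stabilization of the recursion and extraction of $c$ are immediate, and the remaining two cases fall out as routine symbolic substitutions into the limit formula above.
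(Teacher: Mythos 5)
Your overall framework coincides with the paper's: apply the Barnea--Shalev formula and feed it the indices $|G_n/\Stab_{G_n}(m)|$ extracted from the level-stabilizer structure established in Section~\ref{sec:comp}. However, the part of your outline that actually determines the constants --- the description of the ``obstruction patterns'' --- is wrong in two of the three cases, and executed as written it would not yield the stated answers. There is no periodicity in the level direction in any case: by Theorems~\ref{thm:4max}, \ref{thm:oddrist} and \ref{thm:evensubgroups}, $\Stab_{G_n}(m+1)=X^m*\Stab_{G_n}(1)$ for every $m\geq 1$, so the recursion stabilizes after a \emph{single} level for all $n\geq 4$, not with period $n$ (odd case) or period $4$ (the case $n=4$). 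The extra factors of $n$ and $4$ in the denominators come from somewhere else entirely: some of the constraints defining $\Stab_{G_n}(m+1)$ inside $\prod_{|v|=m+1} \Aut(\mathcal{T})$ are imposed once per \emph{parent} vertex (i.e.\ once per block of $n$ siblings) rather than once per vertex, so their density relative to the denominator of the Barnea--Shalev quotient is $1/n$ rather than $1$.

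Concretely, for odd $n$ one has $G_n/\Stab_{G_n}(1)\cong S_n$, but $\Stab_{G_n}(1)$ is the index-$2$ subgroup of $X*G_n$ cut out by the parity condition of Proposition~\ref{prop:oddstab1}; one $\Z/2\Z$ constraint per internal vertex, against a factor $n!$ per child vertex, gives the correction $\log 2/(n\log n!)$. For $n=4$ there are two independent deficiencies: the vertex groups are $A_4$ rather than $S_4$ (a factor $2$ per vertex at every level, density $1$), and $\Stab_{G_4}(1)$ has index $3$ in $X*G_4$ by Proposition~\ref{prop:stab4.1} (a factor $3$ per internal vertex, density $1/4$); hence the correction is $\frac{\log 2}{\log 4!}+\frac{\log 3}{4\log 4!}=\frac{\log(2^4\cdot 3)}{\log\bigl((4!)^4\bigr)}=\frac{\log 48}{\log 331776}$. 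Your reading of $48$ as ``$2\cdot 4!$ per four-level block'' and of $331776=(4!)^4$ as the growth of $|\Aut(\mathcal{T})/\Stab(m)|$ over such a block is a misparsing: $(4!)^4$ is the contribution of the four children of a single vertex, and $|\Aut(\mathcal{T})/\Stab(m)|$ in fact grows by $(4!)^{4^m}$ from level $m$ to level $m+1$. Since these indices are precisely what the proof must get right, you need to replace the period heuristic with the per-vertex versus per-parent count above before the final substitution into the limit becomes routine.
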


The paper is organized as follows. In Section \ref{sec:groups}, we make precise the congruence subgroup problem for branch groups, describe the generalization of the Hanoi towers group to the $n$-ary tree, and prove basic properties of the groups. In Section \ref{sec:wpandab}, we outline a solution to the word problem that aids in computing the abelianization. In Section \ref{sec:comp}, we compute the level and rigid stabilizers for $G_n$ and use this to prove the first three main results. In Section \ref{sec:finiteindex}, we study some subgroups of finite index in $G_n$. And finally in Section \ref{sec:haus}, we compute the Hausdorff dimension for $\overline{G_n}$.

\subsection{Notation}\label{subsec:notation}
For two group elements $g$ and $h$ we will write $g^h$ to indicate $h^{-1}gh$ and $[g,h]$ for $g^{-1}h^{-1}gh$. Additionally, for any group $G$ and any subset $S \subseteq G$, $\langle\langle S \rangle \rangle$ will denote the normal closure of $S$ in $G$.

\section{The groups}\label{sec:groups}

For notational purposes, we focus here on groups acting on regular rooted trees. A fuller discussion in the more general case of rooted spherically homogeneous trees can be found in \cite{BSZ12}, \cite{Gar16a}, or \cite{Ski19}.

Let $n\geq 2$ and let $X$ be a set of size $n$ called an alphabet, $X^m$ the set of words of length $m$ in $X$, and $X^*$ the set of all finite words over $X$ including the empty word denoted by $\varnothing$. Then a regular rooted $n$-ary tree, $\mathcal{T}$, is the Cayley graph of the free monoid on the set $X$, see Figure \ref{fig:binarytree}. For a vertex $u\in X^*$, define the length of $u$, denoted $|u|$, to be the length of the word in $X^*$ corresponding to $u$.

\begin{figure}[ht]
\begin{center}
\begin{tikzpicture}[line cap=round,line join=round,>=triangle 45,x=.83cm,y=.88cm]
\draw (5.,7.)-- (3.,5.);
\draw (5.,7.)-- (7.,5.);
\draw (3.,5.)-- (2.,3.);
\draw (3.,5.)-- (4.,3.);
\draw (7.,5.)-- (6.,3.);
\draw (7.,5.)-- (8.,3.);
\draw (4.5,7.6) node[anchor=north west] {\small $\varnothing$};
\draw (2.36,5.66) node[anchor=north west] {\small $0$};
\draw (6.9,5.66) node[anchor=north west] {\small $1$};
\draw (1.0,3.66) node[anchor=north west] {\small $00$};
\draw (3.8,3.66) node[anchor=north west] {\small $01$};
\draw (5.0,3.66) node[anchor=north west] {\small $10$};
\draw (7.8,3.66) node[anchor=north west] {\small $11$};
\draw [thick, dotted] (7,2.9) -- (7,2.5);
\draw [thick, dotted] (3,2.9) -- (3,2.5);
\end{tikzpicture}
\end{center}
\caption{$X=\{0,1\}$, binary tree}
\label{fig:binarytree}
\end{figure}
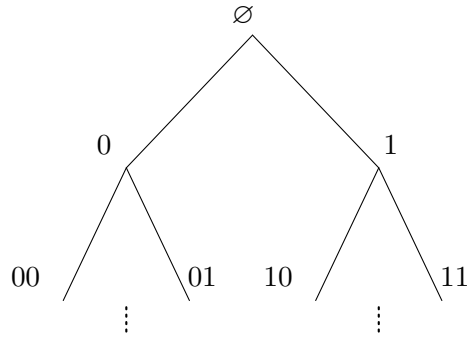

An automorphism of $\mathcal{T}$ is a bijection from $X^*$ to $X^*$ which fixes the root and preserves edge incidences. The symmetric group on $n$ letters, denoted $S_n$, acts in the standard way on $X$ and as such any automorphism $g$ of $\mathcal{T}$ can be described by a labeling of the elements of $X^*$ by permutations 
\[\{g(u) \mid u \in X^*\},\]
where for a vertex $u=x_1x_2 \dots x_m \in X^*$, the action of $g$ on $u$ is described by
\[u^g=x_1^{g(\varnothing)}x_2^{g(x_1)}x_3^{g(x_1x_2)} \cdots x_m^{g(x_1\cdots x_{m-1})}.\]

This gives the isomorphisms 
\[\Aut(\mathcal{T}) \cong \underset{m \geq 1}{\varprojlim}\overset{m \text{ copies }}{(S_n\cdots(\cdots \wr S_n)\wr S_n)\wr S_n}\cong(\cdots(\cdots \wr S_n)\wr S_n)\wr S_n,\]
where on the right-hand side we have the infinitely iterated wreath product of symmetric groups. In particular, this induces the identification
\[\Aut(\mathcal{T})\cong \Aut(\mathcal{T})\wr S_n=\big(\Aut(\mathcal{T})\times \cdots \times \Aut(\mathcal{T})\big)\rtimes S_n\]
where the action of $S_n$ is to permute the coordinates in the product. An element $g\in \Aut(\mathcal{T})$ can be decomposed under this isomorphism as 
\[g=(g_1, \dots, g_n)\sigma\]
 where $\sigma \in S_n$ and $g_i$ is the restriction of the permutation labeling of $g$ to the $i$-th subtree rooted at the first level (canonically identified with the original tree $\mathcal{T}$) and is referred to as the state of $g$ at the $i$-th vertex. Iterating this decomposition, for each $u\in X^*$ we obtain $g_u$, the state of $g$ at the vertex $u$.

\begin{definition}
A group $G\leq \text{Aut}(\mathcal{T})$ is called self-similar if $g_u$ is in $G$ for every $g\in G$ and every $u\in X$. 
\end{definition}

For any subgroup $G$ of Aut$(\mathcal{T})$, four families of subgroups arise naturally from the structure of $\mathcal{T}$.

\begin{definition}
For a vertex $u \in X^*$, the vertex stabilizer, denoted $\Stab_G(u)$, is the set of elements in $G$ which fix the vertex $u$.
\end{definition}

In terms of the labeling of the vertices by elements in a symmetric group, this consists of the elements that necessarily have trivial labeling on all vertices on the geodesic connecting $u$ and $\varnothing$, except possibly at $u$. 

\begin{definition}
For a non-negative integer $m$, the $m$-th level stabilizer, denoted $\Stab_G(m)$, is the normal subgroup $\displaystyle\cap_{|u|=m}\Stab_G(u)$.
\end{definition}

In terms of the labeling, this consists of the elements of $G$ with trivial labeling on all vertices $v$ where $|v|\leq m-1$.  Thus an element $g\in \Stab_G(m)$ will be defined by $|X|^m$ tuple $(g_1, \cdots, g_{n^m})_m$ where each $g_i$ is the state of $g$ at the corresponding vertex on the $m$-th level. Note that $\Stab_G(m)$ has finite index in $G$ for all $m$.

\begin{definition}
For a vertex $u \in X^*$, the rigid stabilizer of the vertex, denoted $\rist_G(u)$, consists of the elements of $G$ which act trivially outside of the subtree rooted at $u$.
\end{definition}

In terms of the labeling, this consists of elements that have trivial labeling on all vertices outside of $\mathcal{T}_u$, the subtree rooted at $u$. If $G$ acts transitively on all the levels of $\mathcal{T}$, then for any two vertices $u$ and $v$ on the same level of the tree, $\rist_G(u)$ is isomorphic to $\rist_G(v)$ (and in fact they are conjugate in $G$). Notationally, for an element $g$ in $\rist_G(u)$, we will write $g=u*\tilde{g}$ where $\tilde{g}=g_u$, the state of $g$ at $u$.  Similarly, for a subgroup $K$ of $\Aut(\mathcal{T})$, we write $v*K=\{v*k \mid k\in K\}$ and $X^m*K=\prod_{|v|=m} v*K$.

\begin{definition}
For a non-negative integer $m$, the $m$-th level rigid stabilizer is the normal subgroup $\rist_G(m)={\langle \rist_G(u)\mid \text{ } |u|=m \rangle} = {\displaystyle\prod_{|u|=m} \rist_G(u)}$, the internal direct product of the rigid stabilizers of the vertices of level $m$. 
\end{definition}

For any group $G$ acting on $\mathcal{T}$, the $m$-th level rigid stabilizer is a subgroup of the $m$-th level stabilizer. Moreover, $\Stab_G(m)$ can be canonically identified with a subgroup of the direct product of $n^m$ copies of $\text{Aut}(\mathcal{T})$ as described above. With this identification, $\rist_G(m)$ is the largest subgroup of $\Stab_G(m)$ which decomposes as a direct product in the same coordinates.

\begin{definition}
A group $G\leq \text{Aut}(\mathcal{T})$ is said to be level transitive if it acts transitively on every level of $\mathcal{T}$.
\end{definition}

Our primary interest here will be in subgroups of $\Aut(\mathcal{T})$ which are branch groups.

\begin{definition}
A group $G\leq \text{Aut}(\mathcal{T})$ is said to be a branch group if it is level transitive and $\rist_G(m)$ has finite index in $G$ for all $m\geq 1$. It is said to be regular branch if it is level transitive and there is a subgroup $K$ with finite index in $G$ such that $v*K\leq K$ for all $v \in X^*$ and such that $X^m*K$ has finite index in $G$ for all $m$. In this case, $K$ is called a branching subgroup.
\end{definition}

If a group is a regular branch group then it is also a branch group as $X^m*K\leq \rist_G(m)$. Note that if $K_1$ and $K_2$ are two branching subgroups for a group $G$ then $\langle K_1, K_2\rangle$ is also a branching subgroup. Thus we define the maximal branching subgroup to be the largest subgroup of $G$ that is branching. Note that the maximal branching subgroup need not be proper. As an example, the Hanoi towers group is a regular branch group with maximal branching subgroup $G_3'$, the derived subgroup of $G_3$ \cite{Ski19}.

The main focus of this paper is on a particular family of groups. For a fixed $n\geq 3$, let $\sigma_i=(1,2, \dots, i-1, i+1, \dots, n-1, n)$, a permutation in $S_n$. Let $a_i$ be the automorphism of the $n$-ary tree defined recursively as follows:
\[a_i=(1,\dots, 1, a_i, 1, \dots, 1)\sigma_i\]
where on the right side of the equation $a_i$ appears in the $i$-th coordinate. 

\begin{definition}
The group $G_n$ is the group generated by $\{a_1, \dots, a_n\}$.
\end{definition}

As mentioned previously, the group $G_3$ is the well-studied Hanoi towers group \cite{GS06}, \cite{GS07}, \cite{BGS03}, \cite{Ski19} whose generators appear in Figure \ref{fig:HTgen}. Our primary focus herein will be on $G_n$, $n\geq 4$. We will recall facts about the Hanoi towers group as they are necessary.

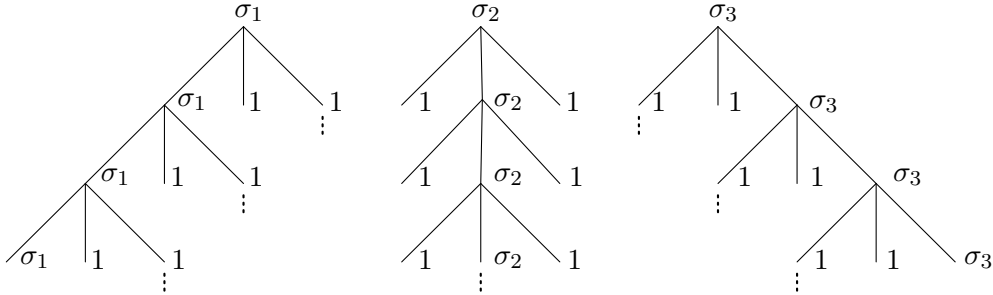
\begin{figure}[ht]
\begin{center}
\begin{tikzpicture}[line cap=round,line join=round,>=triangle 45,x=1.0cm,y=1.0cm, scale=.52];
\draw (0.,0.)-- (-2.,-2.);
\draw (0.,0.)-- (0.,-2.);
\draw (0.,0.)-- (2.,-2.);
\draw (-2.,-2.)-- (-4.,-4.);
\draw (-2.,-2.)-- (-2.,-4.);
\draw (-2.,-2.)-- (0.,-4.);
\draw (-4.,-4.)-- (-6.,-6.);
\draw (-4.,-4.)-- (-4.,-6.);
\draw (-4.,-4.)-- (-2.,-6.);
\draw (6.,0.)-- (4.,-2.);
\draw (6.,0.)-- (6.04,-1.86);
\draw (6.,0.)-- (8.,-2.);
\draw (6.04,-1.86)-- (6.,-4.);
\draw (6.04,-1.86)-- (8.,-4.);
\draw (6.,-4.)-- (4.,-6.);
\draw (6.,-4.)-- (6.,-6.);
\draw (6.,-4.)-- (8.,-6.);
\draw (6.04,-1.86)-- (4.,-4.);
\draw (10.,-2.)-- (12.,0.);
\draw (12.,0.)-- (12.,-2.);
\draw (12.,0.)-- (14.,-2.);
\draw (14.,-2.)-- (12.,-4.);
\draw (14.,-2.)-- (14.,-4.);
\draw (14.,-2.)-- (16.,-4.);
\draw (16.,-4.)-- (14.,-6.);
\draw (16.,-4.)-- (16.,-6.);
\draw (16.,-4.)-- (18.,-6.);
\draw [thick, dotted] (2.,-2.3) -- (2.,-2.8);
\draw [thick, dotted] (0.,-4.3) -- (0.,-4.8);
\draw [thick, dotted] (-2.,-6.3) -- (-2.,-6.8);
\draw [thick, dotted] (6.,-6.3) -- (6.,-6.8);
\draw [thick, dotted] (10.,-2.3) -- (10.,-2.8);
\draw [thick, dotted] (12.,-4.3) -- (12.,-4.8);
\draw [thick, dotted] (14.,-6.3) -- (14.,-6.8);
\begin{small}
\draw[color=black] (0.14,0.36) node {$\sigma_{1}$};
\draw[color=black] (-1.30,-1.92) node {$\sigma_{1}$};
\draw[color=black] (0.3,-1.92) node {$1$};
\draw[color=black] (2.34,-1.92) node {$1$};
\draw[color=black] (-3.24,-3.82) node {$\sigma_{1}$};
\draw[color=black] (-1.66,-3.82) node {$1$};
\draw[color=black] (0.32,-3.82) node {$1$};
\draw[color=black] (-5.28,-5.94) node {$\sigma_{1}$};
\draw[color=black] (-3.68,-5.94) node {$1$};
\draw[color=black] (-1.64,-5.94) node {$1$};
\draw[color=black] (6.14,0.36) node {$\sigma_{2}$};
\draw[color=black] (4.6,-1.92) node {$1$};
\draw[color=black] (6.7,-1.92) node {$\sigma_{2}$};
\draw[color=black] (8.34,-1.92) node {$1$};
\draw[color=black] (4.6,-3.82) node {$1$};
\draw[color=black] (6.7,-3.82) node {$\sigma_{2}$};
\draw[color=black] (8.42,-3.82) node {$1$};
\draw[color=black] (4.6,-5.94) node {$1$};
\draw[color=black] (6.7,-5.94) node {$\sigma_{2}$};
\draw[color=black] (8.38,-5.94) node {$1$};
\draw[color=black] (12.14,0.36) node {$\sigma_{3}$};
\draw[color=black] (10.5,-1.92) node {$1$};
\draw[color=black] (12.44,-1.92) node {$1$};
\draw[color=black] (14.68,-1.96) node {$\sigma_{3}$};
\draw[color=black] (12.68,-3.82) node {$1$};
\draw[color=black] (14.52,-3.82) node {$1$};
\draw[color=black] (16.8,-3.82) node {$\sigma_{3}$};
\draw[color=black] (14.62,-5.94) node {$1$};
\draw[color=black] (16.44,-5.94) node {$1$};
\draw[color=black] (18.6,-5.94) node {$\sigma_{3}$};
\end{small}
\end{tikzpicture}
\end{center}
\caption{The generators $a_1$, $a_2$, and $a_3$ of the Hanoi towers group $G_3$}
\label{fig:HTgen}
\end{figure}

\begin{lemma}
\label{lem:altsym}
For $n\geq 3$, $\langle \sigma_i \mid 1\leq i \leq n \rangle$ is the alternating group on $n$ letters, $A_n$, when $n$ is even and the symmetric group on $n$ letters, $S_n$, when $n$ is odd.
\end{lemma}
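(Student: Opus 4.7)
The approach is to argue in three pieces: a parity computation, the extraction of a single 3-cycle, and an assembly step.

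First, I would observe that each $\sigma_i$ is an $(n-1)$-cycle (it cyclically permutes the $n-1$ symbols in $\{1,\ldots,n\}\setminus\{i\}$ and fixes $i$), so $\operatorname{sgn}(\sigma_i)=(-1)^{n-2}=(-1)^n$. Hence when $n$ is even every generator lies in $A_n$, giving $\langle\sigma_1,\ldots,\sigma_n\rangle\leq A_n$, while when $n$ is odd every generator is odd and the subgroup cannot be contained in $A_n$.

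Second, I would show the reverse inclusion $A_n\leq\langle\sigma_1,\ldots,\sigma_n\rangle$. The key step is producing a single 3-cycle: a direct cycle computation gives that $\sigma_1\sigma_2^{-1}$ is a 3-cycle on $\{1,2,3\}$ (up to a convention on the order of composition, one obtains $(1,2,3)$ or $(1,n,2)$). Conjugating this 3-cycle by powers of $\sigma_1$, which acts as an $(n-1)$-cycle on $\{2,\ldots,n\}$, produces the family of 3-cycles $(1,k,k+1)$ for $k=2,\ldots,n-1$; taking products of consecutive such 3-cycles then yields all adjacent 3-cycles $(k,k+1,k+2)$ in $\{1,\ldots,n\}$. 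Since adjacent 3-cycles are well known to generate $A_n$, the inclusion follows.

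Finally, assembly: in the even case the two inclusions combine to give $\langle\sigma_1,\ldots,\sigma_n\rangle=A_n$, and in the odd case we have $A_n$ contained in the subgroup together with the odd element $\sigma_1$, forcing $\langle\sigma_1,\ldots,\sigma_n\rangle=S_n$. The only real obstacle is the bookkeeping in the first 3-cycle computation; everything afterward is formal manipulation or a standard generating fact for $A_n$, and the sign count is immediate. A slicker alternative would be to check that the subgroup is 2-transitive on $\{1,\ldots,n\}$ (the stabilizer of $1$ contains the full $(n-1)$-cycle $\sigma_1$ on $\{2,\ldots,n\}$, and transitivity is clear) and then invoke Jordan's theorem that a primitive subgroup of $S_n$ containing a 3-cycle already contains $A_n$; this bypasses the conjugation bookkeeping at the cost of citing a deeper result.
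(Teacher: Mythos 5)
Your proof is correct and follows essentially the same route as the paper: a sign count on the $(n-1)$-cycles $\sigma_i$ plus the fact that consecutive $3$-cycles generate $A_n$. The paper is only slightly more direct, observing the single identity $\sigma_{i+1}^{-1}\sigma_i=(i,i+1,i+2)$ for $1\leq i\leq n-2$, which produces all the adjacent $3$-cycles at once and avoids your conjugation-and-product bookkeeping.
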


\begin{proof}
For all $i$, when $n$ is even $\sigma_i \in A_n$ and when $n$ is odd $\sigma_i \notin A_n$. Further, $\sigma_{i+1}^{-1}\sigma_i=(i, i+1, i+2)$ for $1 \leq i \leq n-2$. Since $\{(i, i+1, i+2)\mid 1\leq i \leq n-2\}$ is a generating set of $A_n$, the result follows. 
\end{proof}

For $g\in \text{Aut}(\mathcal{T})$ and $u\in X^*$, let $\pi_u$ be the projection $g\mapsto g_u$. When the domain of $\pi_u$ is restricted to a subgroup stabilizing the vertex $u$ the map $\pi_u$ is a homomorphism.

\begin{definition}
A self-similar group $G$ is called self-replicating if $\pi_u(\Stab_G(u))=G$ for all $u$.
\end{definition}

If $G$ is both self-replicating and acts transitively on the first level of the tree, then $G$ is level transitive.

\begin{lemma}
\label{lem:allfrac}
For all $n$, $G_n$ is self-replicating.
\end{lemma}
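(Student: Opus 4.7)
The plan is to reduce the statement to first-level self-replication: showing that $\pi_i(Stab_{G_n}(i)) = G_n$ for each $i \in \{1, \ldots, n\}$ suffices. Indeed, for any deeper vertex $u = u_1 \cdots u_m$ and any $g \in G_n$, one lifts iteratively by setting $g_m := g$ and then inductively choosing $g_{k-1} \in Stab_{G_n}(u_k)$ with $\pi_{u_k}(g_{k-1}) = g_k$; first-level self-replication guarantees each lift exists, and the final element $g_0$ lies in $Stab_{G_n}(u)$ and projects onto $g$.

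For the first level, the recursive definition of the generators shows that $G_n$ is self-similar, so $\pi_i(Stab_{G_n}(i)) \subseteq G_n$ automatically, and it suffices to exhibit each generator $a_j$ in the image. The case $j = i$ is immediate: $\sigma_i$ fixes $i$, so $a_i \in Stab_{G_n}(i)$ and $\pi_i(a_i) = a_i$. For $j \neq i$, since $n \geq 3$ we may choose $l \in \{1, \ldots, n\} \setminus \{i, j\}$. The permutation $\sigma_l$ is an $(n-1)$-cycle on $\{1, \ldots, n\} \setminus \{l\}$, a set containing both $i$ and $j$, so some power $\sigma_l^k$ sends $j$ to $i$. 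The candidate witness is the conjugate $w := a_l^k a_j a_l^{-k}$.

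Verifying that $w$ does the job is the main technical step, though it is short. Since $\sigma_l^k$ sends $j$ to $i$ and $\sigma_j$ fixes $j$, the top-level permutation of $w$ fixes $i$. The state of $w$ at position $i$ can be read off from the wreath-product formula for a conjugate, and the crucial simplification is that $l \neq j$ forces $(a_l^{\pm k})_j = 1$, so the conjugating factors contribute nothing at position $j$ and one is left with $\pi_i(w) = (a_j)_j = a_j$. Hence every generator $a_j$ lies in $\pi_i(Stab_{G_n}(i))$, and the reverse inclusion from self-similarity completes the proof. The only point requiring care is keeping the conventions consistent in the wreath-product state computation; there is no genuine obstacle, since the freedom $n \geq 3$ to pick $l \notin \{i, j\}$ is what makes all conjugators trivial at the positions that matter.
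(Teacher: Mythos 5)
Your proof is correct and takes essentially the same route as the paper: reduce self-replication to the first level, then realize each generator $a_j$ as the state at vertex $i$ of a conjugate of $a_j$ by a power of $a_l$ with $l\notin\{i,j\}$, the choice $l\neq j$ being exactly what makes the conjugating factors have trivial state in the relevant coordinate. The only caveat is the direction of conjugation (whether the power of $\sigma_l$ should send $j$ to $i$ or $i$ to $j$ depends on the left/right action convention), which you flag yourself and which is harmless since the exponent can be adjusted.
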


\begin{proof}
If a vertex $v$ is a descendant of $u$ (i.e. $v=uw$ for some $w \in X^*$), then 
\[\pi_v(\Stab_G(v))=\pi_w(\pi_u(\Stab_G(v))).\]
Thus a self-similar group $G$ is self-replicating if and only if $\pi_u(\Stab_G(u))=G$ for every vertex $u$ of level 1.
Suppose $u$ is in the $i$-th coordinate. Then for each $a_j$ and $a_k$ where $k\neq i$, there exists a number $m$ such that $j^{\sigma_k^m}=i$. Moreover, $\sigma_j^{\sigma_k^m}$ fixes $i$. Therefore, $a_j^{a_k^m}$ is in $\Stab_{G_n}(u)$ and $\pi_u(a_j^{a_k^m})=a_j$.
\end{proof}

\begin{corollary}
\label{cor:levtrans}
For all $n$, $G_n$ is level transitive.
\end{corollary}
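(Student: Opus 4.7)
The plan is to combine Lemma \ref{lem:allfrac} with the observation made just before it, namely that a self-replicating group whose action on the first level is transitive is automatically level transitive. Since self-replication is already in hand, the only remaining task is to verify transitivity on level one.

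First I would observe that the action of $G_n$ on the first level $X^1$ factors through the root label, which gives a homomorphism $G_n\to S_n$ sending $a_i\mapsto \sigma_i$. Consequently the image of $G_n$ in $\Symm(X^1)$ is exactly the subgroup $\langle \sigma_1,\dots,\sigma_n\rangle\le S_n$. By Lemma \ref{lem:altsym}, this subgroup is either $A_n$ or $S_n$, and in either case (for $n\ge 3$) it acts transitively on the set $\{1,\dots,n\}=X$. Hence $G_n$ acts transitively on the first level.

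Having verified both hypotheses, I would invoke the general principle recorded in the paragraph preceding Lemma \ref{lem:allfrac}: an inductive argument shows that self-replication combined with transitivity on level one propagates to every level. Explicitly, given a vertex $u=x_1\cdots x_m$ and another $v=y_1\cdots y_m$ of the same level, one uses first-level transitivity to find $g\in G_n$ sending $x_1$ to $y_1$; then self-replication allows one to adjust at the subtree rooted at $y_1$ by an element of $G_n$ mapping $x_2\cdots x_m$ to $y_2\cdots y_m$, which by induction exists. This finishes the proof.

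I do not anticipate any obstacle here — this corollary is essentially a direct bookkeeping consequence of Lemma \ref{lem:allfrac} and Lemma \ref{lem:altsym}, with the short inductive step being the only work involved.
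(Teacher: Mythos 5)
Your proposal is correct and follows exactly the route the paper intends: the corollary is stated without a separate proof precisely because it is immediate from Lemma \ref{lem:allfrac} together with the remark that a self-replicating group acting transitively on the first level is level transitive, with first-level transitivity supplied by Lemma \ref{lem:altsym}. Your explicit inductive step (modulo the trivial bookkeeping point that one should map the image of $x_2\cdots x_m$ under the first element, rather than $x_2\cdots x_m$ itself) is just the standard verification of that remark.
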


\section{Word problem and abelianization}\label{sec:wpandab}
We remark that $G_n$ is an example of an automaton group and as such there exists an algorithm in exponential time that solves the word problem \cite{Zuk12}. Here we outline an alternative algorithm for $G_n$ which also allows for the computation of the abelianization. It is a particular case of the algorithm described in Section 3 of \cite{bar03}.

Let $F_n$ be a free group with basis $\{s_1, \dots, s_n\}$. For a freely reduced word $w(s_1, \dots, s_n)=s_{i_1}^{r_1}s_{i_2}^{r_2}\cdots s_{i_k}^{r_k}$, define the length of $w$ to be $|w|=k$. Let $\gamma: F_n \hookrightarrow F_n \wr S_n$ be the map defined by $\gamma(s_i)=(1,\dots, 1, s_i, 1, \dots 1)\sigma_i$ where $s_i$ is in the $i$-th coordinate and $\sigma_i=(1, \dots i-1, i+1, \dots, n)$ as before. In other words, $\gamma$ mimics the recursive definition of $a_i$.

\begin{proposition}
\label{prop:contracting}
Let $w(s_1,\dots, s_n)$ be an element of $F_n$ and suppose $\gamma(w)=(w_1, \dots, w_n)\theta$. Then for all $i$, $|w_i| \leq \frac{|w|+1}{2}$. 
\end{proposition}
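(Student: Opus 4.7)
The plan is to track, for each of the $k$ letters appearing in $w$, the unique coordinate of the tuple part of $\gamma(w)$ to which that letter contributes, and then show that two consecutive letters never land in the same coordinate. The bound on $|w_i|$ will then follow because any subset of $\{1,\dots,k\}$ with no two consecutive elements has size at most $\lceil k/2 \rceil \leq (k+1)/2$.

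First I would expand $\gamma(w) = \prod_{j=1}^k \gamma(s_{i_j})^{r_j}$. Since $\sigma_{i_j}$ fixes $i_j$, each factor satisfies
\[
\gamma(s_{i_j})^{r_j} = (1, \dots, 1, s_{i_j}^{r_j}, 1, \dots, 1)\,\sigma_{i_j}^{r_j}
\]
with $s_{i_j}^{r_j}$ in the $i_j$-th coordinate. Multiplying these factors in the wreath product and setting $T_j \defeq \sigma_{i_1}^{r_1}\cdots\sigma_{i_{j-1}}^{r_{j-1}}$ (with $T_1$ trivial), the $j$-th factor contributes $s_{i_j}^{r_j}$ to position $p_j \defeq T_j(i_j)$ and a $1$ to every other coordinate. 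Consequently, if $J_i \defeq \{j : p_j = i\}$, then $w_i$ is the freely reduced form of the concatenation $\prod_{j \in J_i} s_{i_j}^{r_j}$ taken in increasing order of $j$, and in particular $|w_i| \leq |J_i|$.

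The key step is to show $p_j \neq p_{j+1}$ for every $j \in \{1,\dots,k-1\}$. Writing $T_{j+1} = T_j \sigma_{i_j}^{r_j}$ gives $p_{j+1} = T_j\bigl(\sigma_{i_j}^{r_j}(i_{j+1})\bigr)$, so $p_j = p_{j+1}$ would be equivalent to $i_j = \sigma_{i_j}^{r_j}(i_{j+1})$. Because $w$ is freely reduced we have $i_{j+1} \neq i_j$, and because $\sigma_{i_j}$ fixes only $i_j$ (acting as an $(n-1)$-cycle on the remaining coordinates by Lemma \ref{lem:altsym}'s setup), the element $\sigma_{i_j}^{r_j}(i_{j+1})$ is again different from $i_j$. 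Hence $p_j \neq p_{j+1}$, so $J_i$ contains no two consecutive integers, giving $|J_i| \leq \lceil k/2 \rceil$ and therefore $|w_i| \leq (k+1)/2$.

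The main (minor) obstacle is getting the wreath-product conventions straight so that the formula $p_j = T_j(i_j)$ is unambiguous; once that bookkeeping is in place, the proof reduces to the one-line combinatorial observation that a subset of $\{1,\dots,k\}$ with no two consecutive integers has size at most $\lceil k/2 \rceil$. The argument does not require $w$ to be cyclically reduced, only freely reduced, which is exactly what is needed to force $i_j \neq i_{j+1}$.
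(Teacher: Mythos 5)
Your proof is correct and rests on essentially the same observation as the paper's: because $\sigma_{i_j}$ fixes only $i_j$, two consecutive syllables of a freely reduced word land in distinct coordinates of the tuple. The paper packages this by splitting $w$ into $\lceil k/2\rceil$ blocks of length at most $2$ (each block contributing at most one syllable to each coordinate), whereas you track the target positions $p_j$ directly and invoke the fact that a set with no two consecutive indices has size at most $\lceil k/2\rceil$; the two bookkeeping schemes are interchangeable.
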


\begin{proof}
If $w$ is of length $1$, then $w$ is of the form $s_i^{r}$ so $\gamma(w)=(1, \dots,1, s_i^{r}, 1, \dots, 1)\sigma_i^{r}$ and the claim is true.

Note that $\sigma_{i_1}^{r_1}$ is a permutation of $\{1, \dots, n\} \backslash \{i_1\}$. In particular, if $w=s_{i_1}^{r_1}s_{i_2}^{r_2}$ where $i_1\neq i_2$ then
 $\gamma(w)$ is of the form

\[(1, \dots, s_{i_1}^{r_1}, 1, \dots, 1, s_{i_2}^{r_2}, 1, \dots, 1)\sigma_{i_1}^{r_1}\sigma_{i_2}^{r_2}\]
where $s_{i_2}^{r_2}$ is in the $i_2^{\sigma_{i_1}^{r_1}}$ coordinate and $i_2^{\sigma_{i_1}^{r_1}}\neq i_1$. Again the claim holds.

Now suppose $w=s_{i_1}^{r_1}s_{i_2}^{r_2}\cdots s_{i_k}^{r_k}$ has length $k$ for some $k\geq 3$ and $\gamma(w)=(w_1, \dots, w_n)\theta$. Then for $m=\lceil \frac{k}{2} \rceil$, $w$ can be written as $u_1\cdots u_m$ where $|u_i| \leq 2$ for each $i$. In this case, 
\[\gamma(u_i)=(u_{i_1}, u_{i_2}, \dots, u_{i_n})\theta_i\]
for some $\theta_i \in S_n$ and where for all $i$ between $1$ and $m$  and all $j$ between $1$ and $n$, $|u_{i_j}|$ is either $0$ or $1$. Therefore each $w_i$ is a product of $m$ words of length $0$ or $1$ and $|w_i| \leq \lceil \frac{k}{2} \rceil \leq \frac{k+1}{2}$. \end{proof}

Now let $1 \rightarrow R_n \rightarrow F_n \overset{\phi_0}{\rightarrow} G_n \rightarrow 1$ be a presentation for $G_n$ where $\phi_0(s_i)=a_i$. Since $\gamma$ mimics the recursive definition of the generators of $G_n$, the following diagram commutes:

\[
  \begin{tikzcd}
    F_n \arrow{r}{\gamma} \arrow[swap]{dr}{\phi_0} & Im(\gamma) \arrow{d}{\phi_1} \\
     & G_n
  \end{tikzcd}
\]
where $\phi_1((1, \dots, 1, s_i, 1, \dots 1 )\sigma_i)=a_i$.

This fact along with Proposition \ref{prop:contracting} provide tools for solving the word problem. Indeed, let $w(s_1, \dots, s_n)$ be in $F_n$. If $|w|=1$, then $w(a_1, \dots, a_n)$ is trivial if and only if $w(s_1, \dots, s_n)=s_i^{r(n-1)}$ for some $i$ and $r$. If $|w|\geq 2$, then we can apply $\gamma$ to $w$ to obtain $\gamma(w)=(w_1, \dots, w_n)\theta$ where $|w_j|<|w|$. If $\theta$ is a non-trivial permutation then $w(a_1, \dots, a_n)\neq 1$ and we are done. Similarly, if $\theta$ is trivial and each $w_j$ has length $0$ or $1$, then $w(a_1, \dots, a_n)=1$ if and only if each $w_j(s_1, \dots, s_n)$ is of the form $s_{i_j}^{r_j(n-1)}$. The remaining possibility is that $\theta$ is the trivial permutation and for some $w_j$, the length of $w_j$ is at least 2. In this case repeat the above process to the each $w_j$ until either we find a non-trivial permutation or each obtained word has length at most 1 and is of the form $s_i^{r(n-1)}$.

As a result of the word problem algorithm, the abelianization of $G_n$ is straightforward to compute. First, observe that the generators of $G_n$ have order $(n-1)$ and so $G_n/G_n'$ is a quotient of $(\mathbb{Z}/(n-1)\mathbb{Z})^n$. Now for a word $w(s_1, \dots, s_n)$, let $\epsilon_{s_i}$ be the sum of the exponents on the $s_i$ terms in $w$. Consider now $\gamma(w)=(w_1, \dots, w_n)\theta$. By the way $\gamma$ is defined 
\[\epsilon_{s_i}(w(s_1, \dots, s_n))=\sum_{j=1}^n \epsilon_{s_i}(w_j(s_1, \dots, s_n))\]
The algorithm states that if a word $w(s_1, \dots, s_n)$ produces a trivial word in $G_n$, then after some number of iterations, the sum of the exponents of the $s_i$'s over all the states on a given level is equal to $0$ modulo $n-1$. But this is the same as $\epsilon_{s_i}(w)$. In other words, if $w(a_1, \dots, a_n)=1$ then $\epsilon_{s_i}(w(s_1, \dots, s_n))\equiv 0 \bmod (n-1)$ for all $i$. Thus $R_n\leq \langle F_n', s_1^{n-1}, \dots, s_n^{n-1} \rangle$ and $G_n$ surjects onto $(\mathbb{Z}/(n-1)\mathbb{Z})^n$.

\begin{proposition}
\label{prop:abel}
The abelianization of $G_n$ is $G_n/G_n'\cong(\mathbb{Z}/(n-1)\mathbb{Z})^n$.
\end{proposition}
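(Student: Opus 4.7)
The plan is to squeeze $G_n/G_n'$ between copies of $(\Z/(n-1)\Z)^n$ in both directions. First, I would verify that each generator $a_i$ has order exactly $n-1$. Because $\sigma_i$ fixes the position $i$, the self-similar identity $a_i^k = (1, \dots, 1, a_i^k, 1, \dots, 1)\sigma_i^k$ holds for every $k$; taking $k = n-1$ makes the permutation part trivial and a straightforward level-by-level induction shows $a_i^{n-1} = 1$. Since $G_n/G_n'$ is then generated by $n$ commuting elements of order dividing $n-1$, there is a surjection $(\Z/(n-1)\Z)^n \twoheadrightarrow G_n/G_n'$.

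For the reverse direction, I would construct a surjection $G_n \twoheadrightarrow (\Z/(n-1)\Z)^n$. The exponent-sum maps assemble into a homomorphism $\bar{\epsilon} = (\bar{\epsilon}_{s_1}, \dots, \bar{\epsilon}_{s_n}) \colon F_n \to (\Z/(n-1)\Z)^n$ sending $s_i$ to the $i$-th standard basis vector. The whole task is to show that $\bar{\epsilon}$ factors through $\phi_0$, i.e., $R_n \subseteq \ker \bar{\epsilon}$. The induced map $G_n \to (\Z/(n-1)\Z)^n$ is then surjective and factors through $G_n/G_n'$, closing the sandwich.

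The main obstacle is proving $\bar{\epsilon}$ vanishes on every $w \in F_n$ with $\phi_0(w) = 1$. I would use strong induction on $|w|$. The case $|w| \leq 1$ is immediate: if $w = s_i^r$ and $a_i^r = 1$, then $(n-1) \mid r$ by the order computation above. For the inductive step, let $|w| \geq 2$ and write $\gamma(w) = (w_1, \dots, w_n)\theta$. From $\phi_1(\gamma(w)) = \phi_0(w) = 1$ and the uniqueness of the wreath-product decomposition in $\Aut(\mathcal{T})$, one concludes $\theta$ is trivial and each $w_j$ also evaluates to $1$ in $G_n$. Proposition \ref{prop:contracting} gives $|w_j| \leq (|w|+1)/2 < |w|$ (the strict inequality uses $|w| \geq 2$), so by the inductive hypothesis $\bar{\epsilon}(w_j) = 0$ for every $j$. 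Because $\gamma$ is built so that each occurrence of $s_i$ in $w$ contributes exactly one $s_i$ to a single component $w_j$, we have the additivity identity $\epsilon_{s_i}(w) = \sum_j \epsilon_{s_i}(w_j)$ noted in the preceding discussion, and therefore $\bar{\epsilon}(w) = 0$. This completes the induction and the proof.
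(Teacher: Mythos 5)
Your proposal is correct and follows essentially the same route as the paper: both directions of the "sandwich" use the order-$(n-1)$ computation for the generators, the additivity of the exponent sums $\epsilon_{s_i}$ under $\gamma$, and the contraction estimate of Proposition \ref{prop:contracting} to show that relators have all exponent sums divisible by $n-1$. Your explicit strong induction on $|w|$ is just a cleaner formalization of the paper's appeal to the word-problem algorithm.
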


A similar property to what is described in Proposition \ref{prop:contracting} is frequently studied in the setting of self-similar groups.

\begin{definition} 
A self-similar group $G$ is called contracting if there exists a finite set $\mathrm{N} \subset G$ such that for every $g \in G$, there exists $k\in \mathbb{N}$ such that $g_v \in \mathrm{N}$ for all words $v \in X^*$ of length greater than or equal to $k$. The minimal set $\mathrm{N}$ with this property is called the nucleus of the self-similar action. 
\end{definition}

Since the generators of $G_n$ have order $(n-1)$, the next result follows immediately from Proposition \ref{prop:contracting}.

\begin{corollary}
$G_n$ is contracting with nucleus \[\mathrm{N}=\{1, a_1, \dots, a_1^{n-2}, \dots, a_n, \dots, a_n^{n-2}\}.\]
\end{corollary}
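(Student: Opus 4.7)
The plan is to combine the positive-word representation of elements in $G_n$ with the length-halving bound of Proposition \ref{prop:contracting}. Since each generator $a_i$ has order $n-1$, the inverse $a_i^{-1}$ coincides with the positive power $a_i^{n-2}$, and therefore every $g\in G_n$ can be represented by some positive word $w(s_1,\dots,s_n)\in F_n$.

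First I would iterate $\gamma$. Proposition \ref{prop:contracting} gives $|w_i| \leq (|w|+1)/2$, so whenever $|w|\geq 2$ the length of each state drops strictly below $|w|$. Applying $\gamma$ repeatedly, after finitely many steps every state at the corresponding level is represented in $F_n$ by a word of length at most one, namely either the empty word or some $s_i^{r}$. Under the presentation map $\phi_0$ these project to the identity or to $a_i^{r \bmod (n-1)}$ in $G_n$, both of which lie in $\mathrm{N}$. This exhibits the required finite set and shows that $G_n$ is contracting with nucleus contained in $\mathrm{N}$.

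To finish I would check that $\mathrm{N}$ is in fact minimal. For each $i$ and $1 \leq r \leq n-2$, a short induction using the defining relation $a_i = (1,\dots,1,a_i,1,\dots,1)\sigma_i$ (with $a_i$ in the $i$-th slot, which is fixed by $\sigma_i$) shows that the state of $a_i^{r}$ at the level-$k$ vertex $\underbrace{i\cdots i}_{k}$ is again $a_i^{r}$ for every $k$. Hence every element of $\mathrm{N}$ must lie in any valid nucleus, so $\mathrm{N}$ as listed is minimal. There is no serious obstacle here, since Proposition \ref{prop:contracting} does all the heavy lifting; the only point requiring care is the clean passage from the $F_n$-length bound to the explicit list of $G_n$-elements via the order-$(n-1)$ reduction of length-one words.
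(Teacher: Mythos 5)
Your argument is correct and follows essentially the same route as the paper: represent each element by a positive word using $a_i^{-1}=a_i^{n-2}$, then iterate $\gamma$ and invoke Proposition \ref{prop:contracting} to drive every state down to a word of length at most one, which projects into $\mathrm{N}$. The only difference is that you also verify minimality of $\mathrm{N}$ explicitly (via the fixed state $a_i^{r}$ at the vertices $i\cdots i$), a point the paper leaves implicit; that check is correct and a welcome addition.
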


The abelianization also allows us to put some functions on $G_n$ which will be of use to us later.
 
\begin{definition}
Let $g$ be an element of $G_n$. Let $w(s_1, \dots, s_n)=s_{i_1}^{r_1}s_{i_2}^{r_2}\cdots s_{i_k}^{r_k}$ be a word in $s_1, s_2, \dots, s_n$ such that $w(a_1, \dots, a_n)=g$. Define $\epsilon(g)$ to be
\[\epsilon(g)=(\sum_{j=1}^k r_i) \bmod (n-1).\]
\end{definition}

\begin{lemma} 
\label{lem:surject}
$\epsilon: G_n \rightarrow \mathbb{Z}/(n-1)\mathbb{Z}$ is a well defined, surjective homomorphism.
\end{lemma}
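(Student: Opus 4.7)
The plan is to factor $\epsilon$ through the abelianization, so that well-definedness and the homomorphism property come for free from Proposition \ref{prop:abel}, and then to exhibit a preimage of a generator of $\mathbb{Z}/(n-1)\mathbb{Z}$ for surjectivity.

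First I would observe that the formula $w \mapsto (\sum_j r_j) \bmod (n-1)$ on the free group $F_n$ is simply the composition of the abelianization map $F_n \twoheadrightarrow F_n^{\mathrm{ab}} = \mathbb{Z}^n$ with the coordinate-summation map $\mathbb{Z}^n \to \mathbb{Z}/(n-1)\mathbb{Z}$; call this composition $\tilde{\epsilon}\colon F_n \to \mathbb{Z}/(n-1)\mathbb{Z}$. By construction $\tilde{\epsilon}$ is a group homomorphism. To show that $\epsilon$ is well-defined on $G_n$, it suffices to check that $\tilde{\epsilon}$ vanishes on the kernel of the presentation map $\phi_0\colon F_n \twoheadrightarrow G_n$, i.e.\ on $R_n$.

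For this I would invoke the computation just performed before the statement: that argument shows $R_n \leq \langle F_n', s_1^{n-1}, \dots, s_n^{n-1}\rangle$. Since $\tilde{\epsilon}(F_n') = 0$ and $\tilde{\epsilon}(s_i^{n-1}) = (n-1) \equiv 0 \bmod (n-1)$, we get $\tilde{\epsilon}(R_n) = 0$, so $\tilde{\epsilon}$ descends to a homomorphism $\epsilon\colon G_n \to \mathbb{Z}/(n-1)\mathbb{Z}$ agreeing with the stated formula on any word representative of $g$. Equivalently, one may package this by saying that $\epsilon$ is the composite $G_n \twoheadrightarrow G_n/G_n' \cong (\mathbb{Z}/(n-1)\mathbb{Z})^n \to \mathbb{Z}/(n-1)\mathbb{Z}$ given by summing coordinates, where the middle isomorphism is Proposition \ref{prop:abel}.

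For surjectivity I would just note $\epsilon(a_i) = 1$ for any generator $a_i$, and $1$ generates $\mathbb{Z}/(n-1)\mathbb{Z}$. There is no genuine obstacle here; the only subtlety is not to argue well-definedness from scratch (which would require a direct inspection of relations) but to defer it to the abelianization result, which has already been established.
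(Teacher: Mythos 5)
Your proof is correct and follows essentially the same route as the paper: the paper also defines $\epsilon$ as the composition of the abelianization map $G_n \to (\mathbb{Z}/(n-1)\mathbb{Z})^n$ from Proposition \ref{prop:abel} with the coordinate-summation map, deducing well-definedness and surjectivity from that factorization. Your extra verification at the level of $F_n$ (that the exponent-sum map kills $R_n$ because $R_n \leq \langle F_n', s_1^{n-1}, \dots, s_n^{n-1}\rangle$) is a slightly more explicit unpacking of the same argument, and your surjectivity check via $\epsilon(a_i)=1$ is equivalent to the paper's observation that both maps in the composition are surjective.
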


\begin{proof}
Since $G_n/G_n' \cong (\mathbb{Z}/(n-1)\mathbb{Z})^n$, $\epsilon$ is the composition of the abelianization map $[Ab]:G_n \rightarrow (\mathbb{Z}/(n-1)\mathbb{Z})^n$ with the map 
$\psi: (\mathbb{Z}/(n-1)\mathbb{Z})^n \rightarrow \mathbb{Z}/(n-1)\mathbb{Z}$ defined by $\psi:(s_1, s_2, \dots s_n) \mapsto \sum_{i=1}^n s_i$. Clearly, this map is well defined and as both $[Ab]$ and $\psi$ are surjective, $\epsilon$ is surjective.
\end{proof}

\begin{definition}
Let $g=(g_1, \dots, g_n)\sigma \in G_n$ where $g_i \in G_n$ for all $i$.  Define
\[\epsilon_1(g)=\sum_{i=1}^n \epsilon(g_i) \bmod (n-1).\]
\end{definition}

\begin{lemma}
\label{lem:epsilonequiv}
For an element $g\in G_n$, $\epsilon(g)=\epsilon_1(g)$.
\end{lemma}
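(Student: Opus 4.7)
The plan is to verify that $\epsilon_1$ is a homomorphism $G_n \to \mathbb{Z}/(n-1)\mathbb{Z}$, and then observe that $\epsilon$ and $\epsilon_1$ agree on the generating set $\{a_1,\dots,a_n\}$; since both maps are homomorphisms into an abelian group, this forces $\epsilon=\epsilon_1$. Note that $\epsilon_1$ is well-defined because $G_n$ is self-similar (so the first-level states $g_i$ lie in $G_n$, where $\epsilon$ is already defined by Lemma \ref{lem:surject}).

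For the homomorphism property, I would take two arbitrary elements $g=(g_1,\dots,g_n)\sigma$ and $h=(h_1,\dots,h_n)\tau$ in $G_n$, and compute their product in the wreath-product decomposition $G_n \cong G_n \wr S_n$ coming from self-similarity. The product has the form
\[
gh = \bigl(g_1 h_{\sigma(1)},\, g_2 h_{\sigma(2)},\, \dots,\, g_n h_{\sigma(n)}\bigr)\,\sigma\tau,
\]
so using that $\epsilon$ is already known to be a homomorphism,
\[
\epsilon_1(gh) \;=\; \sum_{i=1}^n \epsilon\bigl(g_i h_{\sigma(i)}\bigr) \;=\; \sum_{i=1}^n \epsilon(g_i) + \sum_{i=1}^n \epsilon(h_{\sigma(i)}).
\]
Since $\sigma$ is a permutation of $\{1,\dots,n\}$, the second sum reindexes as $\sum_{j=1}^n \epsilon(h_j) = \epsilon_1(h)$, yielding $\epsilon_1(gh)=\epsilon_1(g)+\epsilon_1(h)$.

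For agreement on generators, recall that $a_i=(1,\dots,1,a_i,1,\dots,1)\sigma_i$ with $a_i$ in position $i$ and the other first-level states trivial. Thus $\epsilon_1(a_i)=\epsilon(a_i)+\sum_{j\neq i}\epsilon(1)=\epsilon(a_i)$, trivially equal to $\epsilon(a_i)$. Since the $a_i$ generate $G_n$ and both $\epsilon,\epsilon_1$ are homomorphisms agreeing on them, they coincide on all of $G_n$.

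There is no real obstacle here; the only point requiring care is the bookkeeping of the wreath-product multiplication rule to confirm that the permutation $\sigma$ acts only by reshuffling the summands in $\sum_i \epsilon(h_{\sigma(i)})$ and therefore does not affect the sum modulo $n-1$.
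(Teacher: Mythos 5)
Your proof is correct, but it is packaged differently from the paper's. The paper derives the lemma from its earlier word-level computation: for a word $w$ in the free group $F_n$ with $\gamma(w)=(w_1,\dots,w_n)\theta$, each letter $s_i^{\pm 1}$ of $w$ lands in exactly one coordinate $w_j$, so $\epsilon_{s_i}(w)=\sum_{j}\epsilon_{s_i}(w_j)$ for every $i$; summing over $i$ gives $\epsilon(g)=\epsilon_1(g)$ directly. You instead stay inside $G_n$: you verify from the wreath-product multiplication that $\epsilon_1$ is itself a homomorphism into $\mathbb{Z}/(n-1)\mathbb{Z}$ (the permutation $\sigma$ only reindexes the summands $\epsilon(h_{\sigma(i)})$, so it cannot change the total), and then check equality with $\epsilon$ on the generators $a_i$, concluding by the fact that two homomorphisms agreeing on a generating set coincide. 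The underlying combinatorial point is the same in both arguments, but yours is self-contained and avoids lifting to $F_n$, whereas the paper's version is finer-grained — it tracks each $\epsilon_{s_i}$ separately, which is what is actually needed to pin down the full abelianization in Proposition \ref{prop:abel}, not just the total exponent sum. Your appeal to Lemma \ref{lem:surject} for the homomorphism property and well-definedness of $\epsilon$ is legitimate, since that lemma precedes this one.
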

\begin{proof}
This follows from the discussion preceding Proposition \ref{prop:abel}. 
\end{proof}

\section{The congruence subgroup problem}\label{sec:comp}
\begin{definition}
A group $G$ acting on a regular rooted tree has the congruence subgroup property if every subgroup of finite index contains a level stabilizer.
\end{definition}

In the setting of branch groups, this is equivalent to every subgroup of finite index containing a rigid stabilizer and every rigid stabilizer containing a level stabilizer. Since $\Stab_G(m)$ has finite index in $G$ for all $m$ and since this collection forms a descending collection of normal subgroups, taking $\{\Stab_G(m)\mid m \in \mathbb{N} \}$ as a basis for the neighborhoods of $\{1\}$ produces a profinite topology on $G$ (see section 3.1 \cite{RZ10}), called the \textit{congruence topology}. Likewise $\rist_G(m)$ has finite index for all $m$, and in the same way produces a profinite topology called the \textit{branch topology}. Further, $G$ has a third natural topology, the \textit{full profinite topology} where $\mathcal{N}=\{N\unlhd G\mid  |G:N|<\infty\}$ is taken as a basis for the neighborhoods of $\{1\}$. The congruence topology is weaker than the branch topology which is weaker than the full profinite topology. We can complete $G$ in terms of these topologies and obtain three profinite groups:

\begin{align*}
&\overline{G}=\underset{m \geq 1}{\varprojlim} G/\Stab_G(m)  &\textit{the congruence completion}  \\
&\widetilde{G}=\underset{m\geq 1}{\varprojlim} G/\rist_G(m)  &\textit{the branch completion}  \\
&\widehat{G}=\underset{N \in \mathcal{N}}{\varprojlim}G/N  &\textit{the profinite completion}  \\
\end{align*}
Since $\cap_{m\geq 1} \Stab_G(m)=\{1\}$, G is residually finite and embeds into $\overline{G}$, $\widetilde{G}$, and $\widehat{G}$.

Thus $G$ has the congruence subgroup property if and only if $\widehat{G}$ and $\overline{G}$ coincide, that is \textit{congruence kernel}, the kernel of the natural surjection $\widehat{G}\twoheadrightarrow \overline{G}$, is trivial. The \textit{congruence subgroup problem} for branch groups consists not only of determining whether a branch group has the congruence subgroup property but also of quantitatively describing the congruence kernel. Since there is a third topology at play, namely the branch topology, we can instead study two pieces of the congruence kernel, namely the \textit{branch kernel}, the kernel of the natural surjection $\widehat{G}\twoheadrightarrow \widetilde{G}$, and the \textit{rigid kernel}, the kernel of the natural surjection $\widetilde{G} \twoheadrightarrow \overline{G}$. Although a group may have many realizations as a branch group, each of these kernels are invariants of the group and are not dependent on the choice of realization \cite{Gar16a}.

The kernels for $G_3$ are calculated in \cite{BSZ12}.

\begin{theorem*}[\cite{BSZ12}, Theorem 3.11]
The kernel of $\widehat{G_3}\rightarrow \widetilde{G_3}$ is free profinite abelian. The kernel of $\widetilde{G_3}\rightarrow \overline{G_3}$ is a Klein group of order 4. The kernel of $\widehat{G_3}\rightarrow \overline{G_3}$ is metabelian and torsion-free, but is not nilpotent.
\end{theorem*}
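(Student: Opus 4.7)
The plan is to establish the three statements separately, exploiting the regular branch structure of $G_3$ over its derived subgroup $K = G_3'$. The unifying framework is the short exact sequence
\[1 \to \ker(\widehat{G_3} \twoheadrightarrow \widetilde{G_3}) \to \ker(\widehat{G_3} \twoheadrightarrow \overline{G_3}) \to \ker(\widetilde{G_3} \twoheadrightarrow \overline{G_3}) \to 1,\]
arising from the factorization $\widehat{G_3} \twoheadrightarrow \widetilde{G_3} \twoheadrightarrow \overline{G_3}$, which assembles the branch and rigid kernels into the congruence kernel.

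First I would compute the rigid kernel, which by definition equals $\varprojlim_m Stab_{G_3}(m)/Rist_{G_3}(m)$. Using self-similarity and the inclusions $X^m * K \subseteq Rist_{G_3}(m) \subseteq Stab_{G_3}(m)$, each finite quotient reduces to a calculation involving only the first few levels. The aim is to show the inverse system stabilizes after finitely many stages to a fixed finite group, and then to verify via explicit generators and relations that this limit is the Klein four group $(\mathbb{Z}/2\mathbb{Z})^2$. Concretely this means exhibiting two independent nontrivial elements of $Stab_{G_3}(m)/Rist_{G_3}(m)$ that persist under the transition maps, and showing no further elements survive.

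Next, for the branch kernel, the standard identification for regular branch groups writes it as $\varprojlim_m \widehat{Rist_{G_3}(m)}/Rist_{G_3}(m)$. Since $G_3$ is regular branch over $K = G_3'$, the abelianization of $Rist_{G_3}(m)$ is largely controlled by that of $K$, and at each level the relevant quotient is a finitely generated abelian $2$-group determined by a Schur-multiplier-type computation on $G_3/K$. The key step is to check that the structure maps of the inverse system are compatible in a way that produces no torsion in the limit, thereby identifying the branch kernel as a free profinite abelian group.

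Finally, for the congruence kernel, metabelianness is immediate from the exact sequence above, since both the rigid kernel and the branch kernel are abelian. I expect torsion-freeness to be the main obstacle: the rigid kernel has $2$-torsion, so one must show that every lift of a nontrivial Klein-four element into the congruence kernel acquires infinite order from the branch kernel, which amounts to an explicit cohomological calculation showing the extension class is genuinely non-split in a torsion-free manner. Non-nilpotence would then follow by exhibiting an element whose iterated commutators with the branch kernel generate an infinite descending chain in the lower central series, using the recursive action of $\widehat{G_3}$ on itself via the self-similar decomposition. Throughout, the main technical difficulty is the careful bookkeeping of how sections $g_u$ distribute under the self-similar decomposition of $G_3$, since any error in identifying the rigid and level stabilizers propagates through the entire inverse-limit analysis.
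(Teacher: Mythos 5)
The first thing to note is that this paper does not prove the statement at all: it is quoted verbatim as Theorem 3.11 of \cite{BSZ12}, with the remark that a second, constructive proof of the rigid-kernel part appears in \cite{Ski16}. So there is no proof here to compare yours against, and your proposal has to be measured against the cited source. Measured that way, your architecture is the standard one and does match \cite{BSZ12}: split the congruence kernel by the exact sequence relating branch and rigid kernels, identify the rigid kernel with $\varprojlim_m Stab_{G_3}(m)/Rist_{G_3}(m)$ (this identification is correct: a compatible sequence $(g_mRist_{G_3}(m))_m$ dies in $\overline{G_3}$ exactly when each $g_m$ can be taken in $Stab_{G_3}(m)$), compute the branch kernel from abelianizations of rigid stabilizers, and deduce metabelianness formally from the extension. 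But as written this is a plan, not a proof: every step carrying content --- the explicit determination of $Rist_{G_3}(m)$ and $Stab_{G_3}(m)$, the exhibition of the two classes surviving to the Klein four group and the verification that nothing else survives, the computation of $Rist_{G_3}(m)^{\mathrm{ab}}$ and of the transition maps, the non-torsion of lifts, and the non-nilpotence witness --- is deferred to computations you do not perform, and those computations are where the theorem actually lives.

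There is also one concrete error. You describe the layers of the branch kernel as ``finitely generated abelian $2$-groups'' obtained from a Schur-multiplier-type computation, and you write the branch kernel as $\varprojlim_m \widehat{Rist_{G_3}(m)}/Rist_{G_3}(m)$, which is not a well-formed quotient since $Rist_{G_3}(m)$ is dense in its profinite completion. The correct identification, quoted later in this paper from \cite{BSZ12}, is
\[\varprojlim_{\substack{e\geq 1 \\ m\geq 1}} Rist_{G_3}(m)/Rist_{G_3}(m)'Rist_{G_3}(m)^e,\]
an inverse limit over \emph{all} exponents $e$. If the layers were $2$-groups the limit would be pro-$2$; the conclusion that the branch kernel is free profinite abelian, i.e.\ built from copies of $\widehat{\mathbb{Z}}$, comes precisely from the fact that $Rist_{G_3}(m)^{\mathrm{ab}}$ has positive free rank (the same fact that makes $G_3$ fail to be just infinite), so that all finite cyclic quotients occur. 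Your torsion-freeness step is also understated: non-splitness of the extension is not sufficient; you must show that \emph{every} lift of \emph{every} nontrivial element of the Klein four group has nontrivial (hence non-torsion) square in the branch kernel, which again requires the explicit form of the extension rather than a general cohomological principle.
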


A second, more constructive proof for the rigid kernel calculation in the last theorem can be found in \cite{Ski19}.

The first step in computing the kernels for the groups $G_n$, $n\geq 4$, is to understand their rigid stabilizers and level stabilizers.

First we make the following observation.

\begin{observation}
\label{obs:conjugation}
For any vertex $v$, conjugating any element $h\in \rist_{\Aut(\mathcal{T})}(v)$ by an automorphism $g$ of $\mathcal{T}$ works as follows:

Let $|v|=m$ and suppose 
\[h=(1, \dots, 1, h_v, 1, \dots, 1)_m\]
where $h_v$ is in the $v$-th coordinate. Let $g$ decompose as 
\[(g_1, \dots, g_{n^m})\sigma\]
where $\sigma$ is in the $m$-fold iterated wreath product of $S_n$. Suppose $\sigma$ sends the vertex $v$ to the vertex $u$. Then 
\[h^g=(1, \dots, 1, h_v^{g_u}, 1, \dots, 1)_m\]
where $h_v^{g_u}$ is in the $u$-th coordinate.
\end{observation}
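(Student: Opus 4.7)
The plan is a direct wreath-product computation, organized around the structural fact that $\mathrm{Stab}_{\mathrm{Aut}(\mathcal{T})}(m)$ is normal in $\mathrm{Aut}(\mathcal{T})$ and decomposes as the internal direct product $\prod_{|w|=m}\mathrm{Rist}_{\mathrm{Aut}(\mathcal{T})}(w)$, with each factor canonically isomorphic to $\mathrm{Aut}(\mathcal{T})$ via $\mathcal{T}_w \cong \mathcal{T}$. Reducing the problem to this decomposition collapses the observation to computing the state of $h^g$ at each level-$m$ vertex.

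First, since $h \in \mathrm{Rist}_{\mathrm{Aut}(\mathcal{T})}(v) \leq \mathrm{Stab}_{\mathrm{Aut}(\mathcal{T})}(m)$ and the level stabilizer is normal (as the kernel of the action on the truncated tree of depth $m$), the conjugate $h^g$ lies in $\mathrm{Stab}_{\mathrm{Aut}(\mathcal{T})}(m)$, so it is determined by its tuple of $n^m$ states at the level-$m$ vertices. For each such vertex $w$, the automorphism $g$ sends the subtree $\mathcal{T}_w$ bijectively onto $\mathcal{T}_{w^\sigma}$, which forces conjugation by $g$ to send $\mathrm{Rist}_{\mathrm{Aut}(\mathcal{T})}(w)$ isomorphically onto $\mathrm{Rist}_{\mathrm{Aut}(\mathcal{T})}(w^\sigma)$. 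Applied to $w=v$, this yields $h^g \in \mathrm{Rist}_{\mathrm{Aut}(\mathcal{T})}(u)$, so the state of $h^g$ at every level-$m$ vertex other than $u$ is trivial.

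To identify the state at $u$, I restrict attention to the subtree $\mathcal{T}_v$. Under the canonical identifications $\mathcal{T}_v \cong \mathcal{T} \cong \mathcal{T}_u$, the map $g|_{\mathcal{T}_v}\colon \mathcal{T}_v \to \mathcal{T}_u$ is precisely the tree automorphism recorded in the $u$-coordinate of the wreath decomposition $g = (g_1, \dots, g_{n^m})\sigma$, while $h|_{\mathcal{T}_v}$ corresponds to $h_v$. Transporting the identity $h^g = g^{-1} h g$ through these identifications then gives the state $g_u^{-1} h_v g_u = h_v^{g_u}$ at the vertex $u$, as claimed. The main obstacle I anticipate is purely bookkeeping: one must keep the indexing convention in the decomposition of $g$ consistent with the statement (so that the datum $g_u$ is the one carrying $\mathcal{T}_v$ to $\mathcal{T}_u$); once that is fixed, the verification is a direct application of the wreath-product multiplication rule, with no content beyond tracking where a point $x = u y$ goes under $g^{-1}$, then $h$, then $g$.
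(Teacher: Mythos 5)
The paper states this as an Observation with no accompanying proof, so there is no argument of the paper's to compare against; your direct verification is the natural one, and its skeleton is correct: $h^g\in Stab_{Aut(\mathcal{T})}(m)$ by normality of the level stabilizer, conjugation by $g$ carries $Rist_{Aut(\mathcal{T})}(v)$ onto $Rist_{Aut(\mathcal{T})}(v^{\sigma})=Rist_{Aut(\mathcal{T})}(u)$, and the state of $h^g$ at $u$ is $h_v$ conjugated by the automorphism $\mathcal{T}_v\to\mathcal{T}_u$ induced by $g$.

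The gap is exactly the step you defer as ``bookkeeping'': you stipulate that this induced automorphism is the $u$-th entry of the tuple $(g_1,\dots,g_{n^m})$, so that the statement comes out as printed, rather than checking the stipulation against the conventions the paper fixes. Under those conventions ($u^g=x_1^{g(\varnothing)}x_2^{g(x_1)}\cdots$, with $g_i$ defined as the state of $g$ at the \emph{source} vertex $i$), the map $\mathcal{T}_v\to\mathcal{T}_u$ induced by $g$ is $g_v$, not $g_u$: tracing a point $uw$ through $g^{-1}$, then $h$, then $g$ gives $uw\mapsto v\,w^{g_v^{-1}}\mapsto v\,w^{g_v^{-1}h_v}\mapsto u\,w^{g_v^{-1}h_vg_v}$, so the state of $h^g$ at $u$ is $h_v^{g_v}$. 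A concrete instance in $G_3$: for $g=a_1a_2=(a_1,1,a_2)\sigma$ with $1^\sigma=3$ and $h=(h_1,1,1)_1$, one gets $h^g=(1,1,h_1^{a_1})_1$, i.e.\ conjugation by $g_1$ rather than by $g_3=a_2$. So either one adopts your target-indexed reading of the symbol $g_u$, or, with the paper's own indexing, the exponent in the statement should be $g_v$. None of this matters downstream --- Proposition \ref{prop:reduction} and the proof of Proposition \ref{prop:elementinverse} only use that the state at $u$ is \emph{some} conjugate of $h_v$ by an element of the group --- but a complete proof must actually perform the identification you skipped, and performing it shows that this is precisely where the (small) discrepancy between the statement and the paper's conventions sits.
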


For a level transitive, self-replicating group, this significantly reduces the calculations for rigid stabilizers as illustrated by Proposition \ref{prop:reduction}.

\begin{proposition}\label{prop:reduction}
Suppose $G$ is a level transitive, self-replicating group. If $v*g \in G$, then 
\[u*\langle \langle g \rangle \rangle \leq \rist_G(u)\]
for all $u$ with $|u|=|v|$.
\end{proposition}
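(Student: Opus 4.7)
The plan is to combine the conjugation rule of Observation \ref{obs:conjugation} with the two hypotheses on $G$: level transitivity transports $v*g$ to the vertex $u$, while self-replicating lets us adjust the resulting state at $u$ by an arbitrary element of $G$. Since $u*\langle\langle g\rangle\rangle$ is closed under products (as $(u*h_1)(u*h_2)=u*(h_1h_2)$) and inverses, it suffices to produce, for every $x\in G$, an element of $Rist_G(u)$ of the form $u*g^x$ and also $u*(g^{-1})^x$.

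First I would use level transitivity to choose $\tau \in G$ with $\tau(v)=u$. Conjugating $v*g\in G$ by $\tau$, Observation \ref{obs:conjugation} yields an element of the form $u*(g^{\alpha})$ where $\alpha\in G$ is the relevant state of $\tau$ at level $|v|$. Since $v*g\in G$ implies $(v*g)^{\tau}\in G$, and this element acts trivially outside the subtree $\mathcal{T}_u$ by construction, we have $u*(g^{\alpha})\in Rist_G(u)$.

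Next I would invoke self-replicating to cancel the twist $\alpha$ and replace it by an arbitrary $x\in G$. Since $\pi_u\colon Stab_G(u)\twoheadrightarrow G$, pick $y\in Stab_G(u)$ with $\pi_u(y)=\alpha^{-1}x$. Because $y$ fixes $u$, a second application of the observation (with $u=\sigma_y(u)$) shows that conjugation by $y$ sends $u*(g^{\alpha})$ to $u*\bigl((g^{\alpha})^{\alpha^{-1}x}\bigr)=u*(g^{x})$, which therefore lies in $Rist_G(u)$. Taking inverses gives $u*(g^{-1})^{x}\in Rist_G(u)$ as well, so the whole set $\{u*g^{\pm x}:x\in G\}$ sits inside $Rist_G(u)$.

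Finally, every element $h\in\langle\langle g\rangle\rangle$ is a finite product of conjugates $g^{\pm x_i}$, and the map $h\mapsto u*h$ is a homomorphism on $\langle\langle g\rangle\rangle$, so it carries $\langle\langle g\rangle\rangle$ into $Rist_G(u)$, as required. I do not anticipate a serious obstacle; the only delicate point is bookkeeping the conjugating element $\alpha$ produced by the first step and verifying it can indeed be absorbed by the self-replicating lift at $u$ — this comes down to checking that the state appearing in Observation \ref{obs:conjugation} is an element of $G$ (which it is, since $\tau\in G$ is self-similar) so that the required lift $y$ exists.
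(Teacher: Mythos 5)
Your proposal is correct and follows essentially the same route as the paper: use level transitivity to conjugate $v*g$ to the vertex $u$ (picking up a twist by a state $\alpha\in G$), then use the self-replicating property to find an element of $Stab_G(u)$ whose state at $u$ is $\alpha^{-1}x$, so that a further conjugation produces $u*g^{x}$ for arbitrary $x\in G$. The closing observation that $h\mapsto u*h$ is a homomorphism carrying the normal closure into $Rist_G(u)$ is left implicit in the paper but is exactly the intended conclusion.
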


\begin{proof}
Suppose $v*g \in G$ and that $G$ is a level transitive, self-replicating group. Let $g^{h}$ be a conjugate of $g$ in $G$. Since $G$ is level transitive, for any vertex $u$ on the same level as $v$ there exists $\tilde{h}_1\in G$ such that $\tilde{h}_1$ takes $v$ to $u$. Then by Observation \ref{obs:conjugation}, $(v*g)^{\tilde{h}_1}=u*g^{h_1}$ for some $h_1 \in G$. Since $G$ is self-replicating there exists $\tilde{h}\in \Stab_G(u)$ such that the state of $\tilde{h}$ at $u$ is $h_1^{-1}h$. Then $(v*g)^{\tilde{h}_1 \tilde{h}}=u*g^h$.
\end{proof}

Additionally, for the groups $G_n$ there is another simplification that comes from the symmetry of the generators.

\begin{observation}
\label{obs:simplification}
Let $\omega$ be the permutation $(1,2,\cdots, n)$ and let $\lambda$ be the automorphism of $n$-ary tree defined recursively by 
\[\lambda=(\lambda, \lambda, \dots, \lambda)\omega.\]
Then conjugation by $\lambda$ is an automorphism of the group $G_n$ which takes $a_n \mapsto a_1$ and $a_i \mapsto a_{i+1}$ for $1 \leq i \leq n-1$. Further, if \[g=(g_1, g_2, \dots, g_n)\sigma,\]
then 
\[g^\lambda=(g_n^\lambda, g_1^\lambda, \dots, g_{n-1}^\lambda)\sigma^\omega.\]
\end{observation}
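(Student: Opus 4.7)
The plan is to derive the general conjugation formula by computing directly in the wreath product and then to read off the action on the generators using uniqueness of recursive definitions. First I would observe that $\lambda$ is a well-defined element of $\mathrm{Aut}(\mathcal{T})$, since its recursive equation determines all of its permutation labels level by level, and that $\lambda^{-1}=(\lambda^{-1},\ldots,\lambda^{-1})\omega^{-1}$, which one checks by direct multiplication.

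For a general $g=(g_1,\ldots,g_n)\sigma$, I would then compute $\lambda^{-1}g\lambda$ using the wreath product multiplication rule consistent with the paper's action convention $u^g=x_1^{g(\varnothing)}x_2^{g(x_1)}\cdots$, namely $(h_1,\ldots,h_n)\tau\cdot(k_1,\ldots,k_n)\rho=(h_1 k_{1^\tau},\ldots,h_n k_{n^\tau})\tau\rho$. Because every first-level state of $\lambda$ is the same element, $g\lambda$ simplifies immediately to $(g_1\lambda,\ldots,g_n\lambda)\sigma\omega$, and multiplying on the left by $\lambda^{-1}$ then introduces the coordinate shift $i\mapsto i^{\omega^{-1}}$ together with a conjugation of each entry by $\lambda$. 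Since $\omega^{-1}$ sends $i$ to $i-1\bmod n$, this yields the asserted formula $g^\lambda=(g_n^\lambda,g_1^\lambda,\ldots,g_{n-1}^\lambda)\sigma^\omega$.

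Specializing to $a_i=(1,\ldots,1,a_i,1,\ldots,1)\sigma_i$ (with $a_i$ in slot $i$) gives $a_i^\lambda=(1,\ldots,1,a_i^\lambda,1,\ldots,1)\sigma_i^\omega$, with the nontrivial entry now sitting in slot $i+1\bmod n$. A short calculation shows $\sigma_i^\omega=\sigma_{(i\bmod n)+1}$: writing $\sigma_i$ as the cyclic permutation of $\{1,\ldots,n\}\setminus\{i\}$ in their natural order, conjugation by the global shift $\omega$ simply moves the fixed point from $i$ to $i+1$ and shifts the remaining labels accordingly. Hence $a_i^\lambda$ satisfies the same recursive defining equation as $a_{i+1}$; since such a recursion has a unique solution in $\mathrm{Aut}(\mathcal{T})$ (it determines the label at every vertex inductively), we obtain $a_i^\lambda=a_{i+1}$ for $1\le i\le n-1$ and $a_n^\lambda=a_1$. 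Because $\lambda$ cyclically permutes the generating set, conjugation by $\lambda$ restricts to an automorphism of $G_n$. The main obstacle is purely bookkeeping---keeping the wreath-product conventions consistent throughout---after which the uniqueness of recursive definitions does the real work.
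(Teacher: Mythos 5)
Your proof is correct. The paper states this as an Observation with no proof at all, treating it as a routine verification; your direct computation in the wreath product (with the multiplication rule matching the paper's right-action convention), followed by the identification $\sigma_i^\omega=\sigma_{i+1}$ and the appeal to uniqueness of solutions of the recursive defining equations, is exactly the check the author leaves to the reader, and all the steps are sound.
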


\begin{theorem}
\label{thm:branchingcommutator}
For all $n$, $G_n$ is a regular branch group with branching subgroup $G_n'$.
\end{theorem}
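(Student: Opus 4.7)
My plan is to verify the three defining conditions for $G_n$ being regular branch with branching subgroup $K = G_n'$: (i) $[G_n : G_n'] < \infty$; (ii) $v * G_n' \leq G_n'$ for every $v \in X^*$; (iii) $[G_n : X^m * G_n'] < \infty$ for every $m \geq 1$. Condition (i) is immediate from Proposition \ref{prop:abel}, which gives $[G_n : G_n'] = (n-1)^n$.

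For condition (ii), note first that by induction on $|v|$ --- using the identity $v'v'' * g = v' * (v'' * g)$ --- it suffices to handle $v$ on level one; moreover, since $G_n'$ is characteristic, conjugation by the $\lambda$ of Observation \ref{obs:simplification} carries $v * G_n' \leq G_n'$ at one level-one vertex to the same statement at every level-one vertex, so I may fix a single such $v$. I would split the level-one claim into two subtasks. \textbf{(A)} Show $v * G_n' \subseteq G_n$. Set $H_v := \pi_v(Rist_{G_n}(v)) \leq G_n$; by Proposition \ref{prop:reduction}, $H_v$ is automatically a normal subgroup of $G_n$, so it suffices to show the quotient $G_n / H_v$ is abelian. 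For this I would construct, for each pair $(i, j)$, an explicit element of $G_n$ of the form $(1, \ldots, 1, c_{ij}, 1, \ldots, 1)$ (supported only at $v$ with trivial first-level action) whose state at $v$ represents the class of $[a_i, a_j]$ modulo $G_n'$; such witnesses can be assembled from short words in the $a_k$'s by combining self-replication (Lemma \ref{lem:allfrac}) with Lemma \ref{lem:altsym}, which supplies the $A_n$- or $S_n$-action needed to conjugate activity onto a single coordinate. \textbf{(B)} Upgrade $v * g \in G_n$ (for $g \in G_n'$) to $v * g \in G_n'$. The recursion $\epsilon_{s_i}(h) = \sum_{j=1}^n \epsilon_{s_i}(h_j)$ displayed in the discussion preceding Proposition \ref{prop:abel} was in fact established there for each individual exponent-sum $\epsilon_{s_i}$, not just their sum; specializing $h = v*g$ with $g \in G_n'$ yields $\epsilon_{s_i}(v*g) = \epsilon_{s_i}(g) = 0$ for every $i$, placing $v*g$ in the kernel of the abelianization map $G_n \twoheadrightarrow (\mathbb{Z}/(n-1)\mathbb{Z})^n$, i.e., in $G_n'$.

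For condition (iii), iterating (ii) gives $X^m * G_n' \leq G_n'$. Under the injection $Stab_{G_n}(m) \hookrightarrow \prod_{|v|=m} G_n$ the image of $X^m * G_n'$ equals $(G_n')^{n^m}$ --- precisely because (ii) supplies, for any tuple $(g_v)_{|v|=m} \in (G_n')^{n^m}$, the element $\prod_v (v * g_v) \in G_n$ realising it. Hence $[Stab_{G_n}(m) : X^m * G_n'] \leq (n-1)^{n \cdot n^m}$, which combined with the finite index $[G_n : Stab_{G_n}(m)]$ gives $[G_n : X^m * G_n'] < \infty$.

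The main obstacle is Subtask (A): producing the explicit witnesses $(1, \ldots, 1, c_{ij}, 1, \ldots, 1) \in G_n$. This is the sole step requiring concrete word calculations in the $a_k$'s, and it is the part of the argument most sensitive to $n$, even though the resulting branching conclusion is uniform for all $n \geq 3$.
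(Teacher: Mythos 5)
Your reduction scheme is essentially the paper's: both arguments combine self-replication (Lemma \ref{lem:allfrac}), level transitivity, Proposition \ref{prop:reduction}, and the symmetry $\lambda$ of Observation \ref{obs:simplification} to reduce the whole theorem to exhibiting, for finitely many commutators of generators, an element of $G_n$ supported at a single first-level vertex whose state there is a conjugate of that commutator. Your subtask (B) and your index count in (iii) are correct, and they usefully make explicit why one gets $v*G_n'\leq G_n'$ rather than merely $v*G_n'\leq G_n$ (the paper handles this implicitly by choosing witnesses that already lie in $G_n'$). However, there are two problems. First, the condition you impose on the witnesses in subtask (A) is vacuous: since $[a_i,a_j]\in G_n'$, an element ``whose state at $v$ represents the class of $[a_i,a_j]$ modulo $G_n'$'' is simply an element whose state at $v$ lies in $G_n'$ --- a condition met by $c_{ij}=1$ --- and such a witness cannot force $[a_i,a_j]\in H_v$. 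What is actually needed is that the state at $v$ be a genuine $G_n$-conjugate of $[a_i,a_j]$; then normality of $H_v$ gives $[a_i,a_j]\in H_v$ for every pair, hence $G_n'\leq H_v$.

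Second, and more seriously, you have deferred exactly the step that constitutes the paper's entire proof. The content of Theorem \ref{thm:branchingcommutator} is the list of explicit identities, e.g. $[(a_1a_j^{-1})^2,((a_ia_j^{-1})^2)^{a_j^{-(i-2)}}]=([a_1,a_i],1,\dots,1)_1$ with $j=i+2$ for large $n$, together with separate ad hoc elements for $n=4$, $5$, $6$ (and a citation to \cite{Ski16} for $n=3$, where the computation is genuinely different). Nothing in Lemma \ref{lem:altsym} or Lemma \ref{lem:allfrac} guarantees abstractly that single-coordinate witnesses with the required states exist; indeed the delicacy of this point for small $n$ is why the paper treats $n=4,5,6$ separately, and why $Rist_{G_n}(1)$ turns out to be a proper subgroup of $X*G_n$ for $n=4$ and odd $n$. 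As written, your proposal is a correct framing of what must be proved, not a proof of it.
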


\begin{proof}
By Lemma \ref{lem:allfrac} and Corollary \ref{cor:levtrans}, $G_n$ is level transitive and self-replicating. Therefore, by Proposition \ref{prop:reduction}, it suffices to show that for each $g$ in some normal generating set of $G_n'$ there is some $v\in X$ such that $v*g\in G_n$. And finally, by Observation \ref{obs:simplification}, it suffices to find a conjugate of $v*[a_1,a_i]$ for each $i$ between $1$ and $1 + \lfloor \frac{n}{2} \rfloor$ and for some $v\in X$.

The case when $n=3$ is dealt with in \cite{Ski19}.

When $n=4$, we have the following elements:
\[ [a_3^{-a_1},a_3^{-a_2}](a_2^{-1}a_1)^3=(1,1,[a_1,a_2]^{a_2},1)_1\]
\[ [a_2^{a_1^{-1}}, a_2^{a_3}](a_1a_3)^{-3}=(1,[a_1,a_3]^{-a_3^{-1}},1,1)_1\]

When $n=5$, we have the following elements: 
\[ [(a_1a_4^{-1})^2, (a_2a_4^{-1})^2]=([a_1,a_2],1,1,1,1)_1\]
\[ [(a_3^{-1}a_1)^2,(a_3a_1^{-1})^2]=(1,[a_1,a_3],1,1,1)_1\]

When $n=6$, we have he following elements:
\[ [(a_1a_4^{-1})^2,(a_2a_4^{-1})^2]=([a_1,a_2],1,1,1,1,1)_1\]
\[ [(a_3^{-1}a_1)^2, (a_3a_1^{-1})^2]=(1,[a_1,a_3],1,1,1,1)_1\]
\[ [(a_6^{-1}a_1a_2a_1^{-1})^{a_3}, (a_4a_5^{-1}a_4^{-1}a_3)]=(1,1,1,[a_1,a_4],1,1)_1 \]

For the remaining $n$, fix $i$, $1 \leq i \leq 1 + \lfloor \frac{n}{2} \rfloor$ and let $j=i+2 \geq 4$. Then
\[[(a_1a_j^{-1})^2, ((a_ia_j^{-1})^2)^{a_j^{-(i-2)}}]=([a_1, a_i], 1, \dots, 1)_1.\]

Since $G_n'$ has finite index in $G_n$ and we obtain the result.
\end{proof}

\begin{remark}
Note that $G_n'$ is \textit{not} the maximal branching subgroup for $n\geq 4$. The maximal branching subgroup for $G_n$, which depends on the size of $n$ and whether $n$ is even or odd, will be computed in Theorems \ref{thm:4max}, \ref{thm:oddrist}, and \ref{thm:evensubgroups}.
\end{remark}

\begin{definition}
Let $I_n$ be the collection of elements of the form 
\[(1, \dots, 1, g, 1, \dots, 1, \dots, 1, g^{-1}, 1, \dots, 1)_1\]
where $g$ ranges over all elements of $G_n$ and the coordinates in which $g$ and $g^{-1}$ appear ranges over the set $\{1, \dots, n\}$.
\end{definition}

\begin{proposition}
\label{prop:elementinverse}
When $n\geq 4$, $I_n$ is contained in $G_n'$.
\end{proposition}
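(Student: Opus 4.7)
The plan is to show $I_n \subseteq G_n'$ in two stages: first establishing that every element of $I_n$ actually lies in $G_n$, then upgrading this to membership in the derived subgroup.

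The upgrade is immediate. By Proposition \ref{prop:abel}, the abelianization $G_n/G_n' \cong (\mathbb{Z}/(n-1)\mathbb{Z})^n$ sends $g$ to the vector of generator-exponent sums $(\epsilon_{s_1}(g), \ldots, \epsilon_{s_n}(g))$, and the discussion preceding that proposition shows that each $\epsilon_{s_i}$ is additive over the first-level state decomposition. Hence if $h = (1, \ldots, g, \ldots, g^{-1}, \ldots, 1)_1$ lies in $G_n$, then every $\epsilon_{s_i}(h)$ equals $\epsilon_{s_i}(g) + \epsilon_{s_i}(g^{-1}) = 0 \pmod{n-1}$, so $h$ is in the kernel of the abelianization and thus in $G_n'$.

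For the membership claim, first place the nontrivial entries at coordinates $1$ and $2$; other coordinate pairs will be addressed afterward. I would induct on the word length of $g$ in the generators $a_1, \ldots, a_n$, using the identity
\begin{equation*}
(ga, (ga)^{-1}, 1, \ldots, 1)_1 = (g, g^{-1}, 1, \ldots, 1)_1 \cdot (a, a^{-1}, 1, \ldots, 1)_1 \cdot (1, [a^{-1}, g^{-1}], 1, \ldots, 1)_1,
\end{equation*}
which one checks by direct component-wise multiplication in $Stab_{G_n}(1)$. The final factor lies in $G_n$ because $[a^{-1}, g^{-1}] \in G_n'$ and $G_n'$ is the branching subgroup (Theorem \ref{thm:branchingcommutator}). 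The induction thus reduces the problem to producing $(a_i, a_i^{-1}, 1, \ldots, 1)_1 \in G_n$ for each generator; by the cyclic symmetry of Observation \ref{obs:simplification}, a single such construction for one generator suffices, since conjugation by $\lambda$ takes an element of the form $(a_i, a_i^{-1}, 1, \ldots, 1)_1$ to $(1, a_{i+1}, a_{i+1}^{-1}, 1, \ldots, 1)_1$ while preserving membership in $G_n$. To transfer the nontrivial entries to an arbitrary pair of coordinates, conjugate by an element of $G_n$ whose top-level permutation realizes the required transposition or cycle (available by Lemma \ref{lem:altsym} together with Observation \ref{obs:conjugation}).

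The main obstacle is the base case: an explicit realization of $(a_i, a_i^{-1}, 1, \ldots, 1)_1$ as a word in the generators of $G_n$. This is where the hypothesis $n \geq 4$ is presumably needed---the Hanoi towers group $G_3$ has a genuinely different structure---and the construction will likely require case-specific computations in the spirit of the proof of Theorem \ref{thm:branchingcommutator}, exploiting the same kind of contractions coming from squaring products $a_j a_k^{-1}$ so that cancellation in the first-level permutations leaves behind a clean state profile.
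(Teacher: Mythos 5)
Your overall architecture is sound and close in spirit to the paper's: the upgrade from membership in $G_n$ to membership in $G_n'$ via vanishing exponent sums is correct, the telescoping identity $(ga,(ga)^{-1},1,\dots,1)_1=(g,g^{-1},1,\dots,1)_1(a,a^{-1},1,\dots,1)_1(1,[a^{-1},g^{-1}],1,\dots,1)_1$ checks out and legitimately reduces everything to single generators (the paper does the analogous multiplicative assembly), and the use of the $\lambda$-symmetry of Observation \ref{obs:simplification} plus conjugation to move coordinates is exactly what the paper does. The problem is that you have explicitly deferred the base case, and the base case is the entire mathematical content of the proposition. Exhibiting $(a_i,a_i^{-1},1,\dots,1)_1$ (even just modulo $X*G_n'$) as an element of $G_n$ is precisely where the hypothesis $n\geq 4$ must enter --- the statement is \emph{false} for $n=3$, as the paper remarks --- so no amount of formal reduction can substitute for producing an explicit word in $a_1,\dots,a_n$ whose first-level decomposition has this shape. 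Without it, nothing in $I_n$ has been shown to lie in $G_n$, let alone $G_n'$.

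For comparison, the paper's proof supplies exactly this missing ingredient: it computes
\[
[a_1^{a_2},a_3]=(1,a_2^{-1},[a_1,a_3],a_2^{2},a_2^{-1},1,\dots,1)_1\equiv(1,a_2^{-1},1,a_2^{2},a_2^{-1},1,\dots,1)_1 \bmod G_n',
\]
and then multiplies this element $\delta$ by the conjugate $\delta^{-a_1a_3^{-1}}$ to cancel down to $(1,a_2^{-1},a_2,1,\dots,1)_1$ modulo $G_n'$; all the elements involved are commutators or conjugates thereof, so they already lie in $G_n'$ and the two-stage upgrade you propose becomes unnecessary. Everything after that point in the paper (permuting coordinates, passing from $a_2$ to $a_i$, assembling a general $g$) matches your outline. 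So your proposal is not wrong in approach, but as written it is a reduction of the proposition to its one nontrivial computation rather than a proof.
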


\begin{proof}
First, we observe that if $g=(g_1, \dots, g_n)_1$ is an element in $\Stab_{G_n}(1)$ and $h=(h_1, \dots, h_n)\sigma$ is an element of $G_n$, then $g^h=(g_{1^\sigma}^{h_{1^\sigma}}, \dots, g_{n^\sigma}^{h_{n^\sigma}})_1$ which is equivalent to $(g_{1^\sigma}, g_{2^\sigma} , \dots, g_{n^\sigma})_1$ modulo $G_n'$ by Theorem \ref{thm:branchingcommutator}.

Consider the element 
\[[a_1^{a_2}, a_3]=(1, a_2^{-1}, [a_1, a_3], a_2^2, a_2^{-1}, 1, \dots, 1)_1\equiv (1, a_2^{-1}, 1, a_2^2, a_2^{-1},1, \dots, 1)_1 \bmod G_n'\] 
where the equivalence is again by Theorem \ref{thm:branchingcommutator}. Letting $\delta=(1, a_2^{-1}, 1, a_2^2, a_2^{-1},1, \dots, 1)_1$, we see that
\[\delta\delta^{-a_1a_3^{-1}}=(1, a_2^{-1}, a_2^{a_3^{-1}},1, \dots, 1)_1\equiv (1, a_2^{-1}, a_2,1, \dots, 1)_1 \bmod G_n'.\]
Since $G_n$ acts as either $A_n$ or $S_n$ on the first level, by our first observation all elements of the form $(1, \dots, 1, a_2, 1,\dots, 1, a_2^{-1}, 1, \dots,1)_1$ with the $a_2$ and $a_2^{-1}$ in any coordinate are contained in $G_n'$. Similarly, by Observation \ref{obs:conjugation} all elements of the form $(1, \dots, 1, a_i, 1,\dots, 1, a_i^{-1}, 1, \dots,1)_1$ for $1 \leq i \leq n$ are likewise in $G_n'$.

Finally suppose $g=a_{i_1}^{m_{i_1}}\cdots a_{i_k}^{m_{i_k}}$. Then,
\[(1, \dots, 1, a_{i_1}, 1, \dots, 1, a_{i_1}^{-1},1, \dots, 1)_1^{m_{i_1}}\cdots (1, \dots, 1, a_{i_k}, 1, \dots, 1, a_{i_k}^{-1},1, \dots, 1)_1^{m_{i_k}}\]
\[=(1, \dots, 1, g, 1, \dots, 1, a_{i_1}^{-m_{i_1}}\cdots a_{i_k}^{-m_{i_k}}, 1, \dots, 1)_1\]
and 
\[(1, \dots, 1, g, 1, \dots, 1, a_{i_1}^{-m_{i_1}}\cdots a_{i_k}^{-m_{i_k}}, 1, \dots, 1)_1 \equiv (1, \dots, 1, g, 1, \dots, 1, g^{-1}, 1, \dots, 1)_1 \bmod G_n'.\]
\end{proof}

\begin{remark}
Proposition \ref{prop:elementinverse} is not true when $n=3$ which can be seen from the generators for $\Stab_{G_3}(1)$ obtained in \cite{Ski19}. This significantly contributes to the change in the rigid kernels for $G_n$ starting at $n=4$ described in Theorem \ref{thm:branchkernel}.
\end{remark}

\begin{corollary} 
\label{cor:reorderreplace}
For $n\geq 4$,  $(g_1, \dots, g_n)_1$ is in $\Stab_{G_n}(1)$ if and only if $v*(g_{1^\theta}\cdots g_{n^\theta})$ is in $\rist_{G_n}(1)$ for every vertex $v$ on the first level and every permutation $\theta$ of $\{1, \dots, n\}$.
\end{corollary}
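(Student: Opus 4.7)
The plan is to exploit Proposition \ref{prop:elementinverse}, which places $I_n$ inside $G_n'\le G_n$. The key manipulation is the following: starting from an element of $Stab_{G_n}(1)$ whose coordinate $1$ holds some $P$ and whose coordinate $j$ holds $g_j$, right-multiplying by the $I_n$-element with $g_j$ in coordinate $1$ and $g_j^{-1}$ in coordinate $j$ produces a new element of $G_n$ whose coordinate $1$ holds $Pg_j$ and whose coordinate $j$ is now trivial, with the remaining coordinates unchanged; the symmetric left-multiplication prepends $g_j$ instead of appending. Throughout, each $g_i$ lies in $G_n$ by self-similarity (Lemma \ref{lem:allfrac}), so every $I_n$-element invoked is well defined and sits in $G_n$.

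For the forward direction, fix $(g_1,\ldots,g_n)_1\in Stab_{G_n}(1)$ and a permutation $\theta\in S_n$, and set $k=\theta^{-1}(1)$, so that $g_1=g_{\theta(k)}$ must occupy position $k$ of the target product. First, absorb $g_{\theta(k+1)},g_{\theta(k+2)},\ldots,g_{\theta(n)}$ into coordinate $1$ by successive right-multiplications; the result is an element of $G_n$ whose coordinate $1$ holds $g_1g_{\theta(k+1)}\cdots g_{\theta(n)}$ while the states $g_{\theta(1)},\ldots,g_{\theta(k-1)}$ remain untouched in their original coordinates. Next, absorb $g_{\theta(k-1)},g_{\theta(k-2)},\ldots,g_{\theta(1)}$, in that order, by successive left-multiplications, prepending each in turn. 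The final element is $(g_{\theta(1)}g_{\theta(2)}\cdots g_{\theta(n)},1,\ldots,1)_1\in G_n$; with trivial top permutation and nontrivial state only at one vertex of level $1$, it lies in $Rist_{G_n}(1)$.

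For the converse, the hypothesis applied at $\theta=\mathrm{id}$ already gives $(g_1g_2\cdots g_n,1,\ldots,1)_1\in Rist_{G_n}(1)\le G_n$, and running the forward construction in reverse finishes the argument: for $j=n,n-1,\ldots,2$, right-multiply by the $I_n$-element with $g_j^{-1}$ in coordinate $1$ and $g_j$ in coordinate $j$, peeling off one factor at a time and restoring $g_j$ to its original coordinate. After all these steps we arrive at $(g_1,g_2,\ldots,g_n)_1\in G_n$, which has trivial top permutation and hence lies in $Stab_{G_n}(1)$. The only real obstacle is the combinatorial bookkeeping of these reorderings; no structural input beyond Proposition \ref{prop:elementinverse} is needed.
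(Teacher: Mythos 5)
Your argument is correct and is exactly the one the paper intends: the corollary is stated without proof precisely because it follows from Proposition \ref{prop:elementinverse} by the $I_n$-multiplication bookkeeping you carry out (each factor of $I_n$ lies in $G_n'\le Stab_{G_n}(1)$, so multiplying by such factors shuffles states between coordinates without leaving $G_n$). The only cosmetic omission is that you produce the product $g_{1^\theta}\cdots g_{n^\theta}$ in the first coordinate, whereas the statement allows it in an arbitrary coordinate; one further multiplication by the $I_n$-element carrying $h^{-1}$ in coordinate $1$ and $h$ in the desired coordinate handles this by the same trick.
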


\subsection{Rigid kernels}

Since by Lemma \ref{lem:altsym},  
\[ \langle a_1(\varnothing), a_2(\varnothing), \dots, a_n(\varnothing) \rangle =
\begin{cases} 
      S_n & \text{if } n \text{ is odd} \\
      A_n & \text{if } n \text{ is even}
   \end{cases}
\]
and since the normal subgroup structure of the alternating and symmetric groups changes starting at $n=5$, we will split the next computations into 3 settings, when $n=4$, when $n\geq 5$ is odd, and when $n\geq 5$ is even.

\begin{proposition}
\label{prop:stab4.1}
$\Stab_{G_4}(1)= \langle a_1a_3a_4^2, a_2a_1a_3a_1^{-1}, a_1a_3^{-1}a_4a_3, X*G_4', I_4\rangle$.
\end{proposition}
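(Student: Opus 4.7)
The forward inclusion $\supseteq$ is a direct check: using the $3$-cycles $\sigma_i$ from Section~\ref{sec:groups}, one verifies $\sigma_1\sigma_3\sigma_4^2 = \sigma_2\sigma_1\sigma_3\sigma_1^{-1} = \sigma_1\sigma_3^{-1}\sigma_4\sigma_3 = e$ in $A_4$, so the three listed words fix the first level, while $X*G_4'$ and $I_4$ are contained in $Stab_{G_4}(1)$ by inspection.

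For the reverse inclusion, denote the right-hand side by $H$ and set $N \defeq \langle X*G_4', I_4\rangle \leq H$. The key step is a \emph{concentration} identity: for any $g = (g_1,g_2,g_3,g_4)_1 \in Stab_{G_4}(1)$, the elements $(g_1^{-1},1,1,g_1)_1$, $(1, g_2^{-1}, 1, g_2)_1$, and $(1,1,g_3^{-1},g_3)_1$ all lie in $I_4$ by definition, and multiplying $g$ by each in turn yields $g \equiv (1,1,1,g_4 g_1 g_2 g_3)_1 \pmod{I_4}$. Since $X*G_4' \leq N$ permits modifying the fourth coordinate by any element of $G_4'$, we obtain $g \equiv (1,1,1,h)_1 \pmod{N}$ with $h \in G_4$ well-defined modulo $G_4'$; and by the identity $\epsilon_{s_i}(g) = \sum_j \epsilon_{s_i}(g_j)$ established in the proof of Proposition~\ref{prop:abel}, $[h] = \sum_j[g_j] = [g]$ in $G_4^{ab} = (\Z/3\Z)^4$.

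Since $Stab_{G_4}(1)$ is the kernel of $G_4 \twoheadrightarrow A_4$, its image in $G_4^{ab}$ lies in the kernel $K$ of the induced map $G_4^{ab} \to A_4^{ab} \cong \Z/3\Z$. A short calculation in $A_4/V_4$ (where $V_4$ denotes the Klein four-group) gives $[\sigma_1] = [\sigma_3] = -[\sigma_2] = -[\sigma_4]$, whence $K = \{(x_1,x_2,x_3,x_4) : x_1 - x_2 + x_3 - x_4 \equiv 0 \bmod 3\}$, a subgroup of order $27$. The abelianizations
\[v_1 \defeq [a_1a_3a_4^2] = (1,0,1,2),\quad v_2 \defeq [a_2a_1a_3a_1^{-1}] = (0,1,1,0),\quad v_3 \defeq [a_1a_3^{-1}a_4a_3] = (1,0,0,1)\]
all lie in $K$ and, as row reduction confirms, form a $\Z/3\Z$-basis of $K$.

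To conclude, given $g \in Stab_{G_4}(1)$, write $[g] = c_1 v_1 + c_2 v_2 + c_3 v_3 \in K$ and set $g' \defeq (a_1a_3a_4^2)^{c_1}(a_2a_1a_3a_1^{-1})^{c_2}(a_1a_3^{-1}a_4a_3)^{c_3} \in H$. Then $g(g')^{-1} \in Stab_{G_4}(1) \cap G_4'$, and applying the concentration procedure to $g(g')^{-1}$ produces an equivalence $g(g')^{-1} \equiv (1,1,1,u)_1 \pmod N$ with $u \in G_4'$; since $(1,1,1,u)_1 \in X*G_4' \leq N \leq H$, we get $g(g')^{-1} \in N$ and hence $g \in H$. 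The main obstacle is the concentration identity itself: one must verify both that $(g^{-1}, 1, \dots, 1, g, 1, \dots, 1)_1 \in I_4$ for arbitrary $g \in G_4$ (immediate from the definition of $I_4$) and that reorderings of non-commuting products in $G_4$ differ only by elements of $G_4'$, which are absorbed into $X*G_4' \subseteq N$.
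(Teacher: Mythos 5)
Your argument is correct, and it is a genuinely different route from the paper's: the paper's entire proof is a one-line appeal to the Reidemeister--Schreier method for the index-$12$ subgroup $Stab_{G_4}(1)$ followed by an unrecorded elimination of redundant generators, whereas you give a structural argument that can actually be checked line by line. Your ``concentration'' step --- pushing $(g_1,g_2,g_3,g_4)_1$ into the last coordinate at the cost of elements of $I_4$, then absorbing $X*G_4'$ --- reduces the whole problem to the abelianization, where the exponent-sum identity $\epsilon_{s_i}(w)=\sum_j\epsilon_{s_i}(w_j)$ from Section~\ref{sec:wpandab} shows the surviving coordinate represents $[g]\in G_4^{ab}=(\Z/3\Z)^4$. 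I verified the remaining computations: the three words do stabilize the first level; the classes satisfy $[\sigma_1]=[\sigma_3]=-[\sigma_2]=-[\sigma_4]$ in $A_4$ modulo the Klein four-group, so the kernel $K$ of $G_4^{ab}\to A_4^{ab}\cong\Z/3\Z$ is cut out by $x_1-x_2+x_3-x_4\equiv 0$ and has order $27$; and the vectors $(1,0,1,2)$, $(0,1,1,0)$, $(1,0,0,1)$ are independent over $\Z/3\Z$, hence span $K$. Your approach makes transparent exactly which earlier results are needed ($X*G_4'\leq G_4'$ from Theorem~\ref{thm:branchingcommutator} and $I_4\subseteq G_4'$ from Proposition~\ref{prop:elementinverse}, so there is no circularity, and both precede this proposition), and it explains \emph{why} the answer has the shape it does: modulo $\langle X*G_4', I_4\rangle$ the level-one stabilizer is just the preimage of $A_4'$ in the abelianization. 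What the paper's method buys instead is mechanical generality --- Reidemeister--Schreier applies without first knowing the auxiliary subgroups --- at the price of an unverifiable ``eliminating redundant generators'' step; your proof would be a worthwhile replacement.
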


\begin{proof}
This is done by using the Reidemeister-Schreier method for finding generators of a subgroup or by using the GAP \cite{GAP4} package AutomGrp \cite{Atm} and eliminating redundant generators.
\end{proof}

Put $K_4=\langle a_1a_3a_4^2, a_2a_1a_3a_1^{-1}, a_1a_3^{-1}a_4a_3, G_4'\rangle$ and note that it has index $3$ in $G_4$ by Proposition \ref{prop:abel}.

\begin{theorem}
\label{thm:4max}
The subgroup $K_4$ is normally generated in $G_4$ by the set $\{ a_1a_2, a_2a_3, a_3a_4, a_4a_1 \}$ and is the maximal branching subgroup for $G_4$. In particular, for all $m$, the rigid stabilizer of the $m$-th level is precisely $X^m*K_4$ and the stabilizer of the $m$-th level is $X^{m-1}*\Stab_{G_4}(1)$. Consequently, $\Stab_{G_4}(m+1)\leq \rist_{G_4}(m)$ for all $m$ and $G_4$ has a trivial rigid kernel.
\end{theorem}

\begin{proof}
Since $G_4'\leq K_4$, $K_4$ is a normal subgroup. Moreover, $ a_2a_1a_3a_1^{-1}\equiv a_2a_3 \bmod G_n'$ and similarly $a_1a_3^{-1}a_4a_3\equiv a_4a_1 \bmod G_n'$. Further, $a_3a_4$ and $a_1a_2$ can be written as a product of the generators of $K_4$ and their conjugates. Let $\widetilde{K_4}=\langle \langle  a_1a_2, a_2a_3, a_3a_4, a_4a_1 \rangle \rangle$. Now clearly $\widetilde{K_4} \leq K_4$. Since $G_4/ K_4\cong \mathbb{Z}/3 \mathbb{Z}$ to check that $\widetilde{K_4}=K_4$ it suffices to show that $G_4/\widetilde{K_4}$ has order at most three. This is immediate from the fact that 
\[a_1 \equiv a_2^{-1} \equiv a_3 \equiv a_4^{-1} \bmod \widetilde{K_4}\]
and that each $a_i$ has order 3. 

Now to show that $K_4$ is a branching subgroup, by self-similarity it is only necessary to show that $K_4 \geq X*K_4$. Consider the elements
\[a_1 a_3 a_4^2=(a_1, a_3, 1, a_4^2)_1,\]
\[a_2a_1a_3a_1^{-1}=(a_1^{-1}, a_2a_3, 1,a_1)_1,\]
\[a_1a_3^{-1}a_4a_3=(a_1a_4, a_3^{-1},a_3,1)_1.\]

Applying Corollary \ref{cor:reorderreplace}, $(a_1 a_3 a_4^2,1,1,1)_1$, $(a_2a_1a_3a_1^{-1},1,1,1)_1$, and $(a_1a_3^{-1}a_4a_3,1,1,1)_1$ are in $K_4$ (since only elements in the commutator subgroup are required to shift the coordinates).

To show that $K_4$ is the maximal branching subgroup, observe that the generators obtained in Proposition \ref{prop:stab4.1} for $\Stab_{G_4}(1)$ generate a subgroup of index 3 in $X*G_4$ and so in particular, for each vertex $v$ on the first level $\rist_{G_4}(v)$ must be a proper subgroup of $v*G_4$ and thus any branching subgroup must also be a proper subgroup of $G_4$. Since $K_4$ has index 3 and $K_4$ contains $v*K_4$ for every vertex $v$, $K_4$ is the maximal branching subgroup and $X*K_4=\rist_{G_4}(1)$.

By Theorem \ref{thm:branchingcommutator}, Proposition \ref{prop:elementinverse}, and Proposition \ref{prop:stab4.1}, $\Stab_{G_4}(1) \leq K_4$ and the rest follows from self-similarity. 
\end{proof}

We note for the reader that the group $G_4$ was studied first by Siegenthaler in Chapter 6 of \cite{sie09} providing theorems which seem to be in opposition to Theorem \ref{thm:4max}. Indeed, Siegenthaler correctly notes that often in the calculations for regular branch groups one can replace the maximal branching subgroup with any branching subgroup containing the normal core of the maximal one. He then proceeds to incorrectly work with an arbitrary normal branching subgroup which is most evident when his results from this chapter are applied to the group $G_4$. With the exception of Theorem 6.2.3., the proposition, theorems, and corollaries in this chapter remain true if one replaces the hypothesis that $K$ is an arbitrary normal branching subgroup with the hypothesis that $K$ is the normal core of the maximal branching subgroup.

Now we move to odd $n \geq 5$. Recall that for $g\in G_n$ we define $\epsilon(g)=(\sum_{j=1}^k r_i) \bmod (n-1)$ where $w(s_1, \dots, s_n)=s_{i_1}^{r_1}s_{i_2}^{r_2}\cdots s_{i_k}^{r_k}$ is a word with $w(a_1, \dots, a_n)=g$. Recall also that if $g=(g_1, \dots, g_n)\sigma$, then $\epsilon_1(g)=\sum_{i=1}^n \epsilon(g_i) \bmod{(n-1)}$ and by Lemma \ref{lem:epsilonequiv}, $\epsilon(g)=\epsilon_1(g)$.

\begin{proposition}
\label{prop:oddstab1}
For odd $n \geq 5$, if $g\in G_n$ is in $\Stab_{G_n}(1)$, then $\epsilon(g)\equiv 0 \bmod 2$. Moreover, if $g_1, \dots, g_n$ are arbitrary elements of $G_n$ with $\sum_{i=1}^n\epsilon(g_i)\equiv 0 \bmod 2$, then there exists $g\in G_n$ with $g=(g_1, \dots, g_n)_1$ in $\Stab_{G_n}(1)$.
\end{proposition}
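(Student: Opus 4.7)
The plan is to handle the two directions separately.

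For \textbf{necessity}, I would use a sign computation. Since $n$ is odd, $n-1$ is even, so each $(n-1)$-cycle $\sigma_i$ has sign $(-1)^{n-2}=-1$. The composition of the first-level action $G_n\to S_n$ with the sign map $\mathrm{sgn}\colon S_n\to\Z/2\Z$ is therefore a homomorphism sending every $a_i$ to $1$. Any $g=a_{i_1}^{r_1}\cdots a_{i_k}^{r_k}$ in $Stab_{G_n}(1)$ maps to $0$, giving $r_1+\cdots+r_k\equiv 0\pmod 2$, which equals $\epsilon(g)\pmod 2$ (well-defined because $n-1$ is even).

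For \textbf{sufficiency}, I would reduce to a single-coordinate question using Corollary \ref{cor:reorderreplace}. Since $\epsilon$ is a homomorphism (Lemma \ref{lem:surject}), for any tuple with $\sum_i \epsilon(g_i)\equiv 0\pmod 2$ and any permutation $\theta$, the product $g_{1^\theta}\cdots g_{n^\theta}$ again has $\epsilon\equiv 0\pmod 2$. The corollary then reduces the claim to showing that $v*h\in Rist_{G_n}(1)$ for every $v\in X$ and every $h$ in the index-$2$ subgroup $H:=\{h\in G_n:\epsilon(h)\equiv 0\pmod 2\}$. By Theorem \ref{thm:branchingcommutator} we already have $v*G_n'\subseteq Rist_{G_n}(1)$, so the task is to realize a generating set of $H/G_n'$.

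The quotient $H/G_n'$ is the even-sum index-$2$ subgroup of the abelianization $(\Z/(n-1)\Z)^n$, and is generated by the images of $\{a_1^2\}\cup\{a_1 a_j:2\le j\le n\}$ (a straightforward check: any tuple with even coordinate-sum can be written as an integer combination of the $a_1 a_j$'s plus a multiple of $2a_1$). By the $\lambda$-symmetry of Observation \ref{obs:simplification}, which cyclically permutes both the generators $a_i$ and the first-level coordinates, it suffices to exhibit one element of $G_n$ of the form $v_0*a_1^2$ and one of the form $v_1*(a_1 a_2)$: Proposition \ref{prop:reduction} spreads each across all first-level coordinates, and $\lambda$-translates yield the remaining generators in the list above.

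The main obstacle is the explicit construction of these two witnesses. In analogy with the computations in Theorem \ref{thm:branchingcommutator}, I would hunt for short words whose self-similar expansion has trivial top-level permutation and a single non-trivial state equal to $a_1^2$ or $a_1 a_2$. Natural candidates are $(n-1)$-th powers of mixed expressions such as $(a_1 a_j^{-1})^{n-1}$ (with top-level action $(\sigma_1\sigma_j^{-1})^{n-1}$, which can be analyzed directly in $S_n$) and products of conjugates of the commutators used in Theorem \ref{thm:branchingcommutator}; once one element of $G_n$ of each form is exhibited, the preceding reductions close the proof.
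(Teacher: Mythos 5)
Your necessity argument is the same as the paper's (each $\sigma_i$ is an $(n-1)$-cycle with $n-1$ even, hence odd, so the sign of the root permutation reads off the exponent sum mod $2$), and your reduction of sufficiency via Corollary \ref{cor:reorderreplace} and Theorem \ref{thm:branchingcommutator} to the single statement ``$v*h\in G_n$ for every $h$ with $\epsilon(h)\equiv 0\bmod 2$'' is sound. But the proof then stops exactly where the real difficulty begins: you never exhibit the witnesses $v*(a_1a_2)$ or $v*a_1^2$, and the candidates you name are not verified. For instance, $(a_1a_j^{-1})^{n-1}$ has root permutation $(\sigma_1\sigma_j^{-1})^{n-1}$, which need not be trivial (the order of $\sigma_1\sigma_j^{-1}$ need not divide $n-1$), and even when the permutation dies there is no reason the $n$ states should collapse to a single coordinate carrying exactly $a_1a_2$. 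Producing such an element is essentially equivalent to proving that $K_n$ is a branching subgroup (Theorem \ref{thm:oddrist}), which is downstream of this proposition; so the proposal leaves the core of the statement unproved. A smaller inaccuracy: $\lambda$-translates of a witness for $a_1a_2$ give $a_ia_{i+1}$ (cyclically), not $a_1a_j$ for all $j$; this happens to be harmless because for odd $n$ the elements $e_i+e_{i+1}$ (indices mod $n$) already generate the even-sum subgroup of $(\Z/(n-1)\Z)^n$ — in particular $2e_1$ is an alternating sum of them — but as written the reduction to your stated generating set is not what the symmetry delivers.

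The paper avoids witness-hunting entirely. It sets $H_n=\langle I_n, X*G_n'\rangle$, which lies in $Stab_{G_n}(1)$ by Proposition \ref{prop:elementinverse} and Theorem \ref{thm:branchingcommutator}, and observes that $G_n/H_n$ is a subdirect product of $(\Z/(n-1)\Z)^n\times S_n$ (the first factor recording the coordinatewise sum of abelianizations of the states, the second the root permutation). Goursat's Lemma forces the associated common quotient to be an abelian quotient of $S_n$, hence trivial or of order $2$ for $n\geq 5$; your necessity computation rules out triviality, so the image is exactly the parity-matching subdirect product. Every tuple with even $\epsilon$-sum is therefore realized modulo $H_n\leq Stab_{G_n}(1)$, which gives sufficiency. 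If you want to complete your own route instead, you must actually construct an element of $G_n$ of the form $v*(a_1a_2)$ modulo $v*G_n'$; until then the sufficiency direction is a genuine gap.
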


\begin{proof}
First observe that since the $a_i(\varnothing)$ is an element in $S_n \backslash A_n$ for all $i$, if a word in $a_1, \dots, a_n$ produces an element $g$ in $\Stab_{G_n}(1)$, then it necessarily has even exponent sum. In particular, $\epsilon(g)\equiv 0 \bmod 2$.

Recall that $I_n$ is the set of all elements of the form $(1, \dots, 1, g, 1, \dots, 1, g^{-1},1,\dots,1)_1$ and that $I_n \subseteq \Stab_{G_n}(1)$. Define $H_n=\langle I_n, X*G_n' \rangle \unlhd G_n.$ Observe that $G_n/ X*G_n'$ is isomorphic to a subgroup of 
\[(\mathbb{Z}/(n-1)\mathbb{Z})^n \wr S_n=\big((\mathbb{Z}/(n-1)\mathbb{Z})^n \times \cdots \times (\mathbb{Z}/(n-1)\mathbb{Z})^n\big) \rtimes S_n.\]
and hence $G_n/H_n$ isomorphic to a subgroup of $(\mathbb{Z}/(n-1)\mathbb{Z})^n \times S_n$. We claim that in fact $G_n/H_n$ is a subdirect product of $(\mathbb{Z}/(n-1)\mathbb{Z})^n \times S_n$. Indeed, $H_n$ is contained in the kernel of $\epsilon_1$, a surjective homomorphism onto $(\mathbb{Z}/(n-1)\mathbb{Z})^n$, and $G_n$ surjects onto $S_n$.

Let $\pi_1: G_n/H_n \twoheadrightarrow S_n$ and let $\pi_2: G_n/H_n \twoheadrightarrow (\mathbb{Z}/(n-1)\mathbb{Z})^n$.  Identify the kernel of $\pi_1$ with a subgroup of $(\mathbb{Z}/(n-1)\mathbb{Z})^n$ and the kernel of $\pi_2$ with a subgroup of $S_n$. By Goursat's Lemma, $(\mathbb{Z}/(n-1)\mathbb{Z})^n/ \ker(\pi_1) \cong S_n/\ker(\pi_2)$. Since the only non-trivial abelian quotient of $S_n$ has order $2$, $(\mathbb{Z}/(n-1)\mathbb{Z})^n/ \ker(\pi_1)$ is either trivial or order 2. But since a word in $a_1, \dots, a_n$ has a trivial permutation only if it has even exponent sum, $\ker(\pi_1)$ is a proper subgroup of $(\mathbb{Z}/(n-1)\mathbb{Z})^n$. Therefore, \[\Stab_{G_n}(1)=\{ (g_1, \dots, g_n)_1 \mid \sum_{i=1}^n \epsilon(g_i) \equiv 0 \bmod 2\}.\]
\end{proof}

\begin{definition}
For odd $n$, define $K_n=\{g \in G_n \mid \epsilon(g) \equiv 0 \bmod 2\} \leq G_n$.
\end{definition}

\begin{theorem}
\label{thm:oddrist}
For odd $n \geq 5$, $K_n$ is the maximal branching subgroup for $G_n$. Moreover, $\rist_{G_n}(m)=X^m*K_n$ for all $m$. Consequently, 
\[\Stab_{G_n}(m+1)=X^{m}*\Stab_{G_n}(1) \leq \rist_{G_n}(m)\]
for all $m$ and $G_n$ has a trivial rigid kernel.
\end{theorem}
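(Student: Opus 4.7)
The plan is to identify $Rist_{G_n}(1) = X*K_n$, extend this to all levels by induction, and then read off maximality and the rigid-kernel conclusion. First I would verify that $K_n$ is a branching subgroup. Since $K_n$ has index $2$ in $G_n$, each $X^m*K_n$ has finite index, so what needs checking is $v*K_n \subseteq K_n$ for every $v \in X^*$. For $|v|=1$ and $k \in K_n$, Proposition \ref{prop:oddstab1} places the tuple with $k$ in the $v$-th coordinate and $1$ elsewhere inside $Stab_{G_n}(1)\leq G_n$ (its component $\epsilon$-sum is $\epsilon(k)\equiv 0 \bmod 2$); moreover Lemma \ref{lem:epsilonequiv} gives $\epsilon(v*k) = \epsilon(k)\equiv 0 \bmod 2$, so $v*k \in K_n$. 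Iterating handles general $v$.

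Next I would establish $Rist_{G_n}(1) = X*K_n$. The inclusion $\supseteq$ is immediate: given $k \in K_n$, the tuple $(k,1,\dots,1)_1$ lies in $Stab_{G_n}(1)$ by Proposition \ref{prop:oddstab1}, and Corollary \ref{cor:reorderreplace} places $k$ in any chosen coordinate, yielding an element of $Rist_{G_n}(v)$. For $\subseteq$, any $v*\tilde h \in Rist_{G_n}(v)$ lies a priori in $Stab_{G_n}(1)$, so Proposition \ref{prop:oddstab1} forces $\epsilon(\tilde h) = \epsilon(v*\tilde h)\equiv 0 \bmod 2$, i.e., $\tilde h \in K_n$. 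I would then induct on $m$ for the statement $Rist_{G_n}(m) = X^m*K_n$: given the claim at level $m$, any $h \in Rist_{G_n}(v'w)$ with $|v'|=m-1$ and $|w|=1$ lies in $Rist_{G_n}(v') = v'*K_n$ by induction, hence $w*\tilde h_{v'w} \in K_n$, whence the same $\epsilon$-argument yields $\tilde h_{v'w}\in K_n$; conversely the branching property of $K_n$ from step one guarantees $v'w*k \in G_n$ for each $k\in K_n$, and this element lies in $Rist_{G_n}(v'w)$ by construction.

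Maximality is immediate: if $L\leq G_n$ is any branching subgroup, then for every $\ell\in L$ the element $(1,\dots,\ell,\dots,1)_1$ belongs to $L\leq G_n$ and so to $Stab_{G_n}(1)$, forcing $\epsilon(\ell)\equiv 0 \bmod 2$ by Proposition \ref{prop:oddstab1}, hence $L\leq K_n$. For the final claim, Proposition \ref{prop:oddstab1} already shows $Stab_{G_n}(1)\leq K_n$, so decomposing level stabilizers via self-similarity yields
\[Stab_{G_n}(m+1) = X^m*Stab_{G_n}(1) \leq X^m*K_n = Rist_{G_n}(m),\]
which is exactly the condition that every level stabilizer is contained in a rigid stabilizer, so the branch and congruence topologies coincide and the rigid kernel vanishes.

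The main obstacle I anticipate is the inductive step for $Rist_{G_n}(m)=X^m*K_n$, specifically ensuring that the branching property of $K_n$ is applied correctly so that the candidate elements $v*k$ genuinely lie in $G_n$ (and thus in $Rist_{G_n}(v)$), rather than only in $\mathrm{Aut}(\mathcal{T})$. Everything else is a direct repackaging of Proposition \ref{prop:oddstab1} and the additive nature of $\epsilon$.
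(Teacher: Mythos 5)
Your proposal is correct and follows essentially the same route as the paper: use Proposition \ref{prop:oddstab1} to characterize $Stab_{G_n}(1)$ by the parity of $\epsilon$, combine it with Lemma \ref{lem:epsilonequiv} to get $X*K_n=Rist_{G_n}(1)\leq K_n$ and the maximality of $K_n$ among branching subgroups, and then deduce $Stab_{G_n}(m+1)=X^m*Stab_{G_n}(1)\leq Rist_{G_n}(m)$. You simply spell out the induction on levels and the maximality argument that the paper leaves as ``the rest follows from the above work.''
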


\begin{proof}
Again, it suffices to show that $K_n\geq X*K_n =\rist_{G_n}(1)$.
By Proposition \ref{prop:oddstab1}, $(1,\dots, 1,g,1, \dots, 1)_1\in G_n$ if and only if $\epsilon(g)\equiv 0 \bmod 2$ which is if and only if $g\in K_n$. Moreover, by Lemma \ref{lem:epsilonequiv} such an $(1,\dots, 1, g, 1, \dots, 1)_1$ is in $K_n$. By Lemma \ref{lem:epsilonequiv} and Proposition \ref{prop:oddstab1}, $\Stab_{G_n}(1)\leq K_n$ and the rest follows from the above work.
\end{proof}

Now, we work with the remaining groups: $G_n$ where $n\geq 5$ is even.

\begin{definition}
A group $G \leq \Aut( \mathcal{T})$ is called layered if $G$ contains the direct product of $|X|$ copies of $G$ each acting on one of the subtrees of $\mathcal{T}$ rooted at the first level, i.e.
\[X*G\leq G.\]
\end{definition}

\begin{theorem}
\label{thm:evensubgroups}
For even $n\geq 5$, $\Stab_{G_n}(m)=\rist_{G_n}(m)=X^m*G_n$. In particular, $G_n$ is layered and consequently $G_n$ has a trivial rigid kernel.
\end{theorem}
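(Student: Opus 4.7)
The plan is to reduce the theorem to the layering statement $Stab_{G_n}(1)=X\ast G_n$; self-similarity and induction then upgrade this to all $m$, and triviality of the rigid kernel is immediate from $Rist_{G_n}(m)=Stab_{G_n}(m)$, which forces the branch and congruence topologies to coincide.

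To prove $Stab_{G_n}(1)=X\ast G_n$, I would study the image
\[
\overline{Stab}:=Stab_{G_n}(1)/(X\ast G_n')
\]
inside $X\ast G_n/(X\ast G_n')=(G_n/G_n')^n=(\mathbb{Z}/(n-1)\mathbb{Z})^{n^2}$ via the first-level wreath decomposition followed by coordinate-wise abelianization, and show equality. Viewing elements of this module as $n\times n$ matrices over $\mathbb{Z}/(n-1)\mathbb{Z}$ (rows indexed by the generators $a_1,\dots,a_n$, columns by level-$1$ vertices), the inclusion $Stab_{G_n}(1)\hookrightarrow G_n$ composed with abelianization descends to the ``column-sum'' map $\overline{Stab}\to (\mathbb{Z}/(n-1)\mathbb{Z})^n$, by the identity $\epsilon_{s_i}(w)=\sum_j\epsilon_{s_i}(w_j)$ that appears in the discussion preceding Proposition \ref{prop:abel}.

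The argument then splits into two steps. For \emph{surjectivity of the column-sum map}: by Lemma \ref{lem:altsym}, $G_n/Stab_{G_n}(1)\cong A_n$, and since $A_n$ is perfect for $n\geq 5$, the image of $G_n'$ in $A_n$ is all of $A_n$, giving $G_n=Stab_{G_n}(1)\cdot G_n'$; taking abelianizations then shows $Stab_{G_n}(1)$ surjects onto $G_n/G_n'$. For \emph{the kernel being in the image}: by Proposition \ref{prop:elementinverse}, for each $g\in G_n$ and each pair $v\neq w$ of level-$1$ vertices, the element $(v\ast g)(w\ast g^{-1})$ lies in $G_n'\subseteq Stab_{G_n}(1)$ and has matrix image $[g]e_v^T-[g]e_w^T$. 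Any column-sum-zero matrix with columns $c_1,\dots,c_n$ can be written as $\sum_{k\geq 2}c_k(e_k-e_1)^T$, so it lies in $\overline{Stab}$.

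Combining, $\overline{Stab}=(\mathbb{Z}/(n-1)\mathbb{Z})^{n^2}$, so $Stab_{G_n}(1)=X\ast G_n$ and $G_n$ is layered. Induction on $m$ via self-similarity then gives $Stab_{G_n}(m)=X^m\ast G_n$, and the sandwich $X^m\ast G_n\leq Rist_{G_n}(m)\leq Stab_{G_n}(m)$ forces equality throughout. The main obstacle, and the reason the same argument fails both for $n=3$ and for the finite-index subgroups $H_{n,d}$ of Section \ref{sec:finiteindex}, is arranging both halves simultaneously: Proposition \ref{prop:elementinverse} demands $n\geq 4$, while perfectness of the level-$1$ image demands that that image be $A_n$ with $n\geq 5$, forcing $n$ to be even and at least $6$.
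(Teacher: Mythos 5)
Your argument is correct, and it rests on the same two pillars as the paper's proof: Proposition \ref{prop:elementinverse} (to realize all ``column-sum-zero'' elements inside $Stab_{G_n}(1)$) and the perfectness of $A_n$ for $n\geq 5$ (to realize arbitrary column sums). The packaging differs: the paper forms $H_n=\langle I_n, X*G_n'\rangle$, observes that $G_n/H_n$ is a subdirect product of $(\mathbb{Z}/(n-1)\mathbb{Z})^n\times A_n$, and invokes Goursat's Lemma to conclude the product is full because $A_n$ has no nontrivial abelian quotient; you instead work directly in the module $(X*G_n)/(X*G_n')\cong(\mathbb{Z}/(n-1)\mathbb{Z})^{n^2}$ and verify separately that $\overline{Stab}$ contains the kernel of the column-sum map and surjects onto its image. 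These are the same computation seen from two sides --- your factorization $G_n=Stab_{G_n}(1)\cdot G_n'$ is exactly the perfectness input that drives the Goursat step --- but your version is more explicit and avoids Goursat entirely, which some readers may find cleaner; the paper's version has the advantage of running in parallel with the odd case (Proposition \ref{prop:oddstab1}), where the order-two abelianization of $S_n$ produces the index-two subgroup $K_n$ instead of the full group.

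One small correction: you justify $(v*g)(w*g^{-1})\in Stab_{G_n}(1)$ by writing $G_n'\subseteq Stab_{G_n}(1)$, which is false for even $n$ --- the root permutations of $G_n'$ generate $A_n'=A_n$, not the trivial group. What you actually need, and what is true, is that $(v*g)(w*g^{-1})$ has trivial root permutation by construction and lies in $G_n$ because Proposition \ref{prop:elementinverse} places $I_n$ inside $G_n'$; membership in $Stab_{G_n}(1)$ then follows. This does not affect the validity of the argument.
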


\begin{proof}
It suffices to show for $m=1$.
Let $H_n$ be as in the proof of Proposition \ref{prop:oddstab1}. By the same arguments presented there, for even $n\geq 5$, $G_n/H_n$ isomorphic to a subgroup of $(\mathbb{Z}/(n-1)\mathbb{Z})^n \times A_n$ (since the root permutations generate $A_n$ by Lemma \ref{lem:altsym}). This time, $G_n/H_n$ is a subdirect product of $(\mathbb{Z}/(n-1)\mathbb{Z})^n \times A_n$ as $H_n$ is again contained in the kernel of $\epsilon_1$ and $G_n$ surjects onto $A_n$.

Let $\pi_1: G_n/H_n \twoheadrightarrow A_n$ and let $\pi_2: G_n/H_n \twoheadrightarrow (\mathbb{Z}/(n-1)\mathbb{Z})^n$. By Goursat's Lemma, $(\mathbb{Z}/(n-1)\mathbb{Z})^n/ \ker(\pi_1) \cong A_n/\ker(\pi_2)$. Since the only abelian quotient of $A_n$ is the trivial group, $(\mathbb{Z}/(n-1)\mathbb{Z})^n/ \ker(\pi_1)$ is trivial and so $\Stab_{G_n}(1)=X*G_n$. Since $X*G_n$ is in fact a direct product, it is also $\rist_{G_n}(1)$. Moreover, as $X*G_n\leq G_n$, for any $m$ we have $X^m*G_n\leq G_n$. Since the group is self-similar, the result follows.
\end{proof}

Note that Theorem \ref{thm:evensubgroups} tells us that for even $n \geq 5$, $G_n=G_n \wr A_n$. In particular, this implies the following corollary.

\begin{corollary}
For even $n\geq 5$, $\overline{G_n}=(\cdots A_n \wr A_n)\wr A_n)\wr \cdots A_n)$, the infinitely iterated wreath product of $A_n$.
\end{corollary}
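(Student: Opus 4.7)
The plan is to start from the identification $G_n=G_n\wr A_n$, which is exactly what Theorem \ref{thm:evensubgroups} delivers for even $n\geq 5$ (the stabilizer of the first level is the full direct product $X*G_n$, while the action on the first level is $A_n$ by Lemma \ref{lem:altsym} together with the fact that $S_n\cap A_n=A_n$ when all top labels are already in $A_n$). Combined with the definition of the congruence completion
\[
\overline{G_n}=\varprojlim_{m\geq 1}G_n/\Stab_{G_n}(m),
\]
the whole statement becomes a routine inverse-limit calculation, with the only real work being an induction on $m$.

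First I would establish the base case $G_n/\Stab_{G_n}(1)\cong A_n$: the map $g\mapsto g(\varnothing)$ has kernel $\Stab_{G_n}(1)$ and surjects onto the subgroup of $S_n$ generated by the $\sigma_i$, which is $A_n$ by Lemma \ref{lem:altsym} for even $n$. Then I would prove by induction on $m$ that
\[
G_n/\Stab_{G_n}(m)\;\cong\;\underbrace{(\cdots((A_n\wr A_n)\wr A_n)\cdots)\wr A_n}_{m\text{ copies of }A_n}.
\]
For the inductive step, Theorem \ref{thm:evensubgroups} gives $\Stab_{G_n}(1)=X*G_n$ as an internal direct product, so $G_n\cong G_n\wr A_n$ canonically; passing to the quotient by $X^m*\Stab_{G_n}(1)=\Stab_{G_n}(m+1)$ on the left and $(\Stab_{G_n}(m))^n\rtimes\{e\}$ on the right gives the natural isomorphism
\[
G_n/\Stab_{G_n}(m+1)\;\cong\;\bigl(G_n/\Stab_{G_n}(m)\bigr)\wr A_n,
\]
which closes the induction.

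Finally I would verify that these isomorphisms are compatible with the natural restriction maps $G_n/\Stab_{G_n}(m+1)\twoheadrightarrow G_n/\Stab_{G_n}(m)$ (the compatibility is immediate once one writes both sides as automorphisms of the truncated tree $\mathcal{T}_m$), and take the inverse limit to obtain
\[
\overline{G_n}\;\cong\;\varprojlim_{m\geq 1}\underbrace{(\cdots(A_n\wr A_n)\cdots)\wr A_n}_{m\text{ copies}}\;=\;(\cdots\wr A_n)\wr A_n)\wr A_n,
\]
the infinitely iterated wreath product of $A_n$. The only mild subtlety — and the step I would be most careful about — is the compatibility of the wreath decomposition at level $m+1$ with the one at level $m$, i.e.\ checking that the isomorphism $G_n\cong G_n\wr A_n$ iterates coherently so that the directed system on the right-hand side is genuinely the standard one defining the profinite iterated wreath product; once that bookkeeping is done the corollary is immediate.
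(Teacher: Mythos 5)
Your argument is correct and follows the same route as the paper, which obtains the corollary directly from the identification $G_n=G_n\wr A_n$ supplied by Theorem \ref{thm:evensubgroups} (the paper leaves the induction on $m$ and the passage to the inverse limit implicit, whereas you write them out). No gaps; the extra care you take about compatibility of the wreath decompositions across levels is exactly the bookkeeping the paper suppresses.
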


\subsection{Branch kernels}

The combination of Theorems \ref{thm:4max}, \ref{thm:oddrist}, and \ref{thm:evensubgroups} show that unlike when $n=3$, when $n\geq 4$ the congruence kernel for $G_n$ is the same as the branch kernel. The following is extracted from the proof of Theorem 2.7 in \cite{BSZ12}.

\begin{theorem*}[\cite{BSZ12}]
Let $G$ be a branch group. Then the branch kernel is 
\[\varprojlim_{\substack{e\geq 1 \\ m\geq 1}} \rist_G(m)/\rist_G(m)'\rist_G(m)^e\]
\end{theorem*}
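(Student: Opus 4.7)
The goal is to identify the branch kernel $K := \ker(\widehat G \twoheadrightarrow \widetilde G)$ with the displayed inverse limit by analyzing the profinite topology that $\widehat G$ induces on each rigid stabilizer $U_m := \text{Rist}_G(m)$.

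First, I would observe that since $\widetilde G = \varprojlim_m G/U_m$, the branch kernel satisfies
\[
K \;=\; \bigcap_{m \geq 1} \overline{U_m},
\]
with closures taken in $\widehat G$. Each $\overline{U_m}$ is an open (since $[G : U_m] < \infty$) closed subgroup of $\widehat G$, so it is itself profinite, and standard completion theory identifies it as
\[
\overline{U_m} \;=\; \varprojlim_{\substack{N \trianglelefteq G, \, [G : N] < \infty \\ N \leq U_m}} U_m/N,
\]
where the constraint $N \leq U_m$ loses nothing because any $G$-normal $N$ of finite index may be replaced by $N \cap U_m$.

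Second, the technical core is to show that the subfamily $\{U_m' U_m^e : e \geq 1\}$ is cofinal in the inverse system above, as $(e, m)$ varies. The easy direction is formal: each $U_m'U_m^e$ is $G$-normal because commutator and power subgroups are characteristic in the $G$-normal subgroup $U_m$, and $U_m/U_m'U_m^e$ is a finitely generated abelian group of exponent dividing $e$, hence finite. Conversely, given a $G$-normal finite-index $N \leq U_m$, setting $e = [G : N]$ makes $U_m^e \leq G^e \leq N$ automatic by Lagrange, so the problem reduces to finding $m' \geq m$ with $U_{m'}' \leq N$, i.e.~with $U_{m'}/(U_{m'} \cap N)$ abelian.

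Third, this abelianness is where the branch structure must enter. The subgroup $U_{m'} N / N$ of the finite group $G/N$ decreases with $m'$ and therefore stabilizes at some finite $Q_\infty$. Using the internal direct product decomposition $U_{m'} = \prod_{|v| = m'} \text{Rist}_G(v)$, the images in $Q_\infty$ of the factors $\text{Rist}_G(v)$ pairwise commute, and by level-transitivity of $G$ they are all $G/N$-conjugate to a single quotient $Q_0$ of $\text{Rist}_G(v_0)$. Exploiting the sub-branch structure of $\text{Rist}_G(v_0)$ at still deeper levels, any commutator $[x,y]$ with $x, y \in \text{Rist}_G(v_0)$ can be rewritten, via conjugation by $G$, as a product of commutators of elements supported on disjoint sub-subtrees, and these vanish in $Q_0$; iterating forces $Q_0$ (hence $Q_\infty$) to be abelian.

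Combining the three stages gives $\overline{U_m} \cong \varprojlim_e U_m/U_m'U_m^e$, and taking the intersection over $m$ yields
\[
K \;=\; \bigcap_{m} \overline{U_m} \;\cong\; \varprojlim_{e, m} U_m / U_m' U_m^e.
\]
The principal obstacle I expect is the third stage: the abelianness of $Q_\infty$ is genuinely branch-theoretic and has no analogue for general residually finite groups; it rests on the characteristic branch-group trick of spreading commutators across disjoint sub-subtrees via conjugation by $G$. Once that is in hand, the remainder is a bookkeeping exercise with inverse limits of finite groups.
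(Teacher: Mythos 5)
A preliminary remark: the paper offers no proof of this statement --- it is quoted as extracted from the proof of Theorem~2.7 of \cite{BSZ12} --- so there is no in-paper argument to compare against, and your attempt must stand on its own. Your overall architecture (identify the branch kernel with $\bigcap_{m}\overline{U_m}$ in $\widehat G$, writing $U_m=Rist_G(m)$ as you do, then show the subgroups $U_m'U_m^e$ are cofinal) is the right one, and your Stage~2 reduction to finding $m'$ with $U_{m'}'\leq N$ is exactly the pivot of the argument. But Stage~3, your substitute for Grigorchuk's lemma that every non-trivial normal subgroup $N$ of a branch group contains some $Rist_G(m')'$, does not work as described. If $x,y\in Rist_G(v_0)$ are replaced modulo $N$ by elements $x'',y''$ of a deeper $U_{m''}$ and you expand $[x'',y'']$ along the direct product decomposition, the cross terms do vanish, but you are left with $\prod_{|v|=m''}[x''_v,y''_v]$, where each surviving commutator has \emph{both} entries supported on the \emph{same} subtree $\mathcal{T}_v$; nothing more vanishes. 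Iterating shows only that the images of $Rist_G(m)'$ in $G/N$ stabilize at some subgroup, and forcing that stable image to be trivial via $\bigcap_m Stab_G(m)=1$ is precisely the congruence subgroup property you are not entitled to assume. The correct argument must use a non-trivial element of $N$ itself: pick $1\neq g\in N$ and a vertex $w$ with $w^g\neq w$; for $h,k\in Rist_G(w)$ the element $[g,h]=(h^{-1})^g h$ lies in $N$ and its first factor is supported on the disjoint subtree $\mathcal{T}_{w^g}$ (Observation~\ref{obs:conjugation}), so $[[g,h],k]=[h,k]\in N$, giving $Rist_G(w)'\leq N$ and, by level transitivity and normality, $Rist_G(|w|)'\leq N$.

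The final assembly also contains an error: $\overline{U_m}\cong\varprojlim_e U_m/U_m'U_m^e$ cannot hold, since the left side is an open subgroup of $\widehat G$ containing the non-abelian group $U_m$, while the right side is abelian. What cofinality actually gives is $\overline{U_m}=\varprojlim_{m'\geq m,\,e}U_m/(U_{m'}'U_{m'}^{e})$, with $U_m$ rather than $U_{m'}$ in the numerator. The clean finish is to note that $\{U_m'U_m^e\}$ is cofinal among \emph{all} finite-index normal subgroups of $G$, so that $\widehat G=\varprojlim_{m,e}G/U_m'U_m^e$, and then to apply exactness of inverse limits of finite groups to $1\to U_m/U_m'U_m^e\to G/U_m'U_m^e\to G/U_m\to 1$ to read off $\ker(\widehat G\twoheadrightarrow\widetilde G)=\varprojlim_{m,e}U_m/U_m'U_m^e$. (You also use that $U_m/U_m'U_m^e$ is finite, which requires $G$ to be finitely generated; this is harmless for the groups $G_n$ of this paper but worth flagging for the general statement.)
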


\begin{theorem}
\label{thm:branchkernel}
For $n \neq 3$, the branch kernel, and thus the congruence kernel, for $G_n$ is the inverse limit
\[\varprojlim_{m\geq 1} M_n^m\]
 where $M_n$ is a finite abelian group. When $n\geq 5$ is even, $M_n$ is cyclic of order $n-1$ and when $n=4$ or $n\geq 5$ is odd, $M_n$ has exponent bounded between $(n-1)$ and $2(n-1)$.
\end{theorem}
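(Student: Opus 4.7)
The plan is to apply the Bartholdi-Siegenthaler-Zaleskii formula quoted just before the statement and reduce the problem to controlling the abelianizations of the rigid stabilizers. By Theorems \ref{thm:4max}, \ref{thm:oddrist}, and \ref{thm:evensubgroups}, in each of the three cases $Rist_{G_n}(m)$ decomposes as the internal direct product $X^m * B$ of $n^m$ copies of a fixed \emph{branching subgroup} $B$, where $B = K_4$ when $n = 4$, $B = K_n$ when $n \geq 5$ is odd, and $B = G_n$ when $n \geq 5$ is even. Consequently $Rist_{G_n}(m)^{ab} \cong (B^{ab})^{n^m}$. Once we know $B^{ab}$ has bounded exponent (verified below), the inner limit over $e$ in the BSZ formula collapses, and the branch kernel becomes an inverse limit of direct powers of the finite abelian group $M_n := B^{ab}$, with transition maps induced by $Rist_{G_n}(m+1) \hookrightarrow Rist_{G_n}(m)$; on the abelianization these descend to sum-over-children maps coming from the wreath product identification.

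Next I would identify $M_n$ and bound its exponent in each case. For $n \geq 5$ even, Proposition \ref{prop:abel} yields $M_n = G_n^{ab} \cong (\mathbb{Z}/(n-1)\mathbb{Z})^n$, of exponent exactly $n-1$; the reduction to a cyclic factor of order $n-1$ claimed in the statement would come from exploiting the $\lambda$-symmetry of Observation \ref{obs:simplification}, which identifies the $n$ coordinates of $G_n^{ab}$ up to cyclic rotation. For $n = 4$ or odd $n \geq 5$, $B$ is a finite-index subgroup of $G_n$ (index $3$ or $2$ respectively); applying the transfer map $G_n^{ab} \to B^{ab}$ and using $\exp(G_n^{ab}) = n-1$ shows that $\exp(B^{ab})$ divides $[G_n : B]\cdot(n-1)$, which is exactly $2(n-1)$ in the odd case. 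The lower bound $n-1$ is obtained by exhibiting a character of $B$ of order $n-1$: the homomorphism $\epsilon: G_n \to \mathbb{Z}/(n-1)\mathbb{Z}$ of Lemma \ref{lem:surject} restricts nontrivially to $B$ in both cases, and pulling back a generator through Proposition \ref{prop:abel} and the explicit generating set of Proposition \ref{prop:stab4.1} provides the required character.

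The main obstacle is the sharp upper bound in the $n = 4$ case, where the transfer bound gives only $3(n-1) = 9$ while the statement claims $2(n-1) = 6$, and the explicit character construction for the lower bound in both non-even cases. Both require explicit calculation in $B$: one must first obtain a generating set for $B$ via the Reidemeister-Schreier method (as in Proposition \ref{prop:stab4.1}), express each generator as a word in the $a_i$, then use the contracting word-problem algorithm of Section \ref{sec:wpandab} together with the commutator identities of Theorem \ref{thm:branchingcommutator} and Proposition \ref{prop:elementinverse} to determine which of these generators survive in $B^{ab}$ and with what orders. The subtle point is showing that no unexpected commutator relation in $G_n$ forces an additional collapse of $B^{ab}$; ruling this out is precisely what keeps the exponent from dropping below $n - 1$, and the refinement from $9$ to $6$ in the $n = 4$ case will come from a specific commutator-in-$G_4'$ identity that cuts the transfer bound.
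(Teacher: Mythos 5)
Your overall skeleton matches the paper: apply the Bartholdi--Siegenthaler--Zalesskii formula, use Theorems \ref{thm:4max}, \ref{thm:oddrist}, \ref{thm:evensubgroups} to write $Rist_{G_n}(m)\cong B^{n^m}$ for a fixed branching subgroup $B$, note $Rist_{G_n}(m)^{ab}\cong (B^{ab})^{n^m}$, and use bounded exponent to collapse the limit over $e$. The problems are in how you control $B^{ab}$. First, the upper bound. The claim that the transfer gives $\exp(B^{ab})\mid [G_n:B]\cdot\exp(G_n^{ab})$ is false: what the transfer yields is that the composite $G_n^{ab}\to B^{ab}\to G_n^{ab}$ is multiplication by the index, which bounds nothing about $\exp(B^{ab})$. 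The infinite dihedral group is a counterexample to your divisibility statement: its abelianization has exponent $2$, yet its index-$2$ translation subgroup has infinite abelianization. So even in the odd case, where your bound happens to produce the right number $2(n-1)$, the derivation does not stand, and your fallback for $n=4$ (redo Reidemeister--Schreier and chase the word problem) is left entirely unexecuted. The paper's upper bound is much more elementary: $K_4$ (resp.\ $K_n$, $n$ odd) is generated by conjugates of explicit elements of order $6$ (resp.\ $2(n-1)$) --- the normal generators $a_ia_j$ from Theorem \ref{thm:4max} and the set $\{a_{n-1}a_1, a_na_2, a_ia_{i+2}\}$ --- and a group generated by elements of order dividing $d$ has abelianization of exponent dividing $d$.

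Second, the lower bound. Restricting $\epsilon$ to $B$ does not produce a character of order $n-1$ when $n\geq 5$ is odd: there $K_n=\{g:\epsilon(g)\equiv 0\bmod 2\}$ and $n-1$ is even, so $\epsilon(K_n)$ is the subgroup of even residues in $\mathbb{Z}/(n-1)\mathbb{Z}$, of order $(n-1)/2$. The paper instead uses the full abelianization map: $K_n$ surjects onto the index-$2$ ``even coordinate-sum'' subgroup of $(\mathbb{Z}/(n-1)\mathbb{Z})^n$, which for $n\geq 5$ still contains elements of order $n-1$ (e.g.\ the image of $a_1a_2$), giving $\exp(K_n/K_n')\geq n-1$. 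Finally, your proposed reduction of $M_n$ to a cyclic group in the even case via the $\lambda$-symmetry of Observation \ref{obs:simplification} does not work: $\lambda$ induces an (outer) automorphism permuting the coordinates of $G_n^{ab}\cong(\mathbb{Z}/(n-1)\mathbb{Z})^n$, but it does not identify them in any quotient --- the paper simply takes $M_n=G_n/G_n'$ in that case, and the passage to the form $\varprojlim_m M_n^m$ with a cyclic $M_n$ is a re-indexing of the inverse system, not a statement that $B^{ab}$ is cyclic.
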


\begin{proof}
For $n=4$,  $\rist_{G_4}(m)/\rist_{G_4}(m)' \cong (K_4)^{4^m}/(K_4')^{4^m}=(K_4/K_4')^{4^m}$. Now $K_4$ is a subgroup of index $3$ containing $G_4'$ and hence surjects onto a subgroup of index $3$ in $G_4/G_4'=(\mathbb{Z}/3\mathbb{Z})^4$. The image of $K_4$ is then an abelian group of exponent $3$ and so $K_4/K_4'$ has exponent at least $3$. It is easy to check that the normal generators of $K_4$ given by Theorem \ref{thm:4max} have order 6. Since conjugating does not change the order of an element, $K_4$ has a generating set consisting of elements of order 6 and so $K_4/K_4'$ has exponent at most 6. Now since $K_4$ has finite index in a finitely generated group, it is finitely generated. Therefore $K_4/K_4'$ is a finite abelian group with exponent between 3 and 6.

For odd $n \geq 5$, $\rist_{G_n}(m)/\rist_{G_n}(m)' \cong (K_n)^{n^m}/(K_n')^{n^m}=(K_n/K_n')^{n^m}$. Now $K_n$ is a subgroup of index $2$ containing $G_n'$ and as such surjects onto a subgroup of index $2$ in $G_n/G_n'=(\mathbb{Z}/(n-1)\mathbb{Z})^n$. Since $n\geq 5$, the image of $K_n$ is an abelian group of exponent $(n-1)$ and so $K_n/K_n'$ has exponent at least $(n-1)$. Moreover, since $n$ is odd, a generating set for $K_n$ is $\{a_{n-1}a_1, a_na_2, a_ia_{i+2} \mid 1 \leq i \leq n-2 \}$. It is easy to check that each of these elements has order $2(n-1)$. Thus $K_n/K_n'$ has exponent at most $2(n-1)$. 

For even $n\geq 5$, 
\[\rist_{G_n}(m)/\rist_{G_n}(m)' \cong (G_n)^{n^m}/(G_n')^{n^m}=(G_n/G_n')^{n^m}=\big((\mathbb{Z}/ (n-1)\mathbb{Z})^n\big)^{n^m}.\]

Now since for all $n\geq 4$, $\rist_{G_n}(m)/\rist_{G_n}(m)'$ has finite exponent, the collection $\{\rist_{G_n}(m)/\rist_{G_n}(m)'\}$ is cofinal with $\{\rist_{G_n}(m)/\rist_{G_n}(m)'\rist_{G_n}(m)^e\}$. Further, since 
 
\[
  \rist_{G_n}(m)/\rist_{G_n}(m+1) =
  \begin{cases}
       (G_n/G_n')^{n^m} & \text{if $n \geq 5$ is even} \\
       (K_n/K_n')^{n^m} & \text{if $n=4$ or $n\geq 5$ is odd} 
 \end{cases}
\]

we see that similarly $\{ (G_n/G_n')^m \}$ and $\{(K_n/K_n')^m \}$ respectively also form cofinal sets with $\rist_{G_n}(m)/\rist_{G_n}(m+1)$. In particular, the branch kernel is \[\varprojlim_{m \geq 1} M_n^m\]

where
\[
 M_n =
  \begin{cases}
       G_n/G_n' & \text{if $n\geq 5$ is even} \\
       K_n/K_n' & \text{if $n=4$ or $n\geq 5$ is odd} 
 \end{cases}
\]
\end{proof}

\begin{remark}
Our techniques only put bounds on the exponent of $K_n/K_n'$ for $n=4$ and odd $n\geq 5$. It would be desirable to precisely understand this group.
\end{remark}

\subsection{Just Infinite-ness}

\begin{definition}
A group is said to be just infinite if it is infinite but every proper quotient is finite.
\end{definition}

In \cite{Gri00} Theorem 4, a criterion for determining when a branch group is just infinite is posed. 

\begin{theorem*}[\cite{Gri00}]
A branch group $G$ is just infinite if and only if for each $m\geq 1$, the index of $\rist_G(m)'$ in $\rist_G(m)$ is finite.
\end{theorem*}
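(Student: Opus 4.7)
The plan is to prove each implication separately.

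For the backward direction $(\Leftarrow)$, assume $[Rist_G(m):Rist_G(m)']$ is finite for every $m \geq 1$, and let $1 \neq N \trianglelefteq G$. The heart of the argument is the classical lemma that in any branch group, every non-trivial normal subgroup contains $Rist_G(m)'$ for some level $m$. To prove it, I would fix a non-identity $g \in N$; since $G$ acts faithfully on $\mathcal{T}$, $g$ acts non-trivially on some level, and after passing to a deeper level if necessary there is a vertex $u$ of some level $m$ with $u^g \neq u$. For any $h \in Rist_G(u)$ the commutator
\[ [g,h] = (h^g)^{-1}\cdot h \]
lies in $N$ and decomposes in the disjoint direct product $Rist_G(u^g) \times Rist_G(u)$ with $u$-coordinate $h$ and $u^g$-coordinate $(h^g)^{-1}$. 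Taking the iterated commutator $[[g,h_1],[g,h_2]]$ of two such elements then yields an element of $N$ whose $u$-coordinate is the arbitrary commutator $[h_1,h_2] \in Rist_G(u)'$, because the $u^g$- and $u$-factors commute and the commutator in a direct product is taken coordinate-wise. Projecting and conjugating by elements of $G$ realizing level-transitivity on $X^m$ then spreads this conclusion to every vertex $v$ of level $m$, so $Rist_G(m)' \leq N$. Granted the lemma, the branch hypothesis gives $[G : Rist_G(m)] < \infty$ and the standing assumption gives $[Rist_G(m) : Rist_G(m)'] < \infty$, so $Rist_G(m)'$ has finite index in $G$ and $[G:N] \leq [G : Rist_G(m)'] < \infty$.

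For the forward direction $(\Rightarrow)$, assume $G$ is just infinite. The level rigid stabilizer $Rist_G(m)$ is normal in $G$ (since $G$ permutes rigid vertex stabilizers within each level), hence $Rist_G(m)'$ is characteristic in $Rist_G(m)$ and normal in $G$. Provided $Rist_G(m)' \neq 1$, the just-infinite hypothesis immediately gives $[G : Rist_G(m)'] < \infty$, and a fortiori $[Rist_G(m) : Rist_G(m)'] < \infty$. To eliminate the remaining possibility $Rist_G(m)' = 1$, observe that $Rist_G(m)$ would then be an infinite abelian normal subgroup of finite index in $G$, so that $G$ is virtually abelian and just infinite. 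A just infinite virtually abelian group is forced to be virtually $\mathbb{Z}$ (any faithful integral representation of the finite quotient on $\mathbb{Z}^n$, $n\geq 2$, admits proper invariant subgroups of infinite index, violating just-infiniteness). But then $Rist_G(m)$ would be virtually cyclic; yet by the branch hypothesis it contains $Rist_G(u_1)\times\dots\times Rist_G(u_{n^m})$ as an internal direct product of $n^m \geq 2$ nontrivial infinite factors. This direct-product decomposition is incompatible with being virtually $\mathbb{Z}$, a contradiction.

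The main obstacle is the coordinate bookkeeping in the backward direction's key lemma: one must use very carefully that $u$ and $u^g$ lie in disjoint subtrees so that $h$ and $h^g$ commute, and then combine the resulting first-commutator element of $N$ with a second commutator in $N$ to extract the full $Rist_G(u)'$ in the $u$-coordinate. Promoting this to all of $Rist_G(m)'$ requires both normality of $N$ and level-transitivity of $G$ on $X^m$. A secondary subtlety, in the forward direction, is ruling out the abelian edge case $Rist_G(m)' = 1$; here the argument relies on the structural incompatibility between the branching direct-product decomposition of $Rist_G(m)$ and the narrow class of just infinite virtually abelian groups.
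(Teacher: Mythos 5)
First, a point of comparison: the paper itself gives no proof of this statement; it is quoted verbatim from Theorem~4 of \cite{Gri00}. So your proposal must be measured against the standard argument, whose outline you correctly reproduce (the key lemma that every non-trivial normal subgroup of a branch group contains some $Rist_G(m)'$, plus the finite-index bookkeeping). However, both directions of your write-up contain genuine errors.

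In the backward direction, your commutator $[[g,h_1],[g,h_2]]$ does not deliver what you claim. Computing coordinatewise in $Rist_G(u^g)\times Rist_G(u)$ gives
\[
[[g,h_1],[g,h_2]] \;=\; \bigl([h_1^{-1},h_2^{-1}]\bigr)^g\,[h_1,h_2],
\]
whose $u^g$-component $([h_1^{-1},h_2^{-1}])^g$ is non-trivial precisely in the non-abelian situation the lemma is designed for. You acknowledge the element only has the \emph{$u$-coordinate} you want, and then invoke ``projecting'' --- but projection to a coordinate is not an operation available inside $N$: the image of an element of $N$ under $\pi_u$ need not lie in $N$, so you have not shown $Rist_G(u)'\leq N$, only that $N$ contains elements whose $u$-coordinate is a commutator. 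The standard repair is the \emph{mixed} commutator: for $h_2\in Rist_G(u)$ plain, the $u^g$-component of $[g,h_1]$ commutes with $h_2$, so $[[g,h_1],h_2]=[h_1,h_2]$ exactly, and this lies in $N$ by normality. Then conjugation (level transitivity) and the decomposition $Rist_G(m)'=\prod_{|v|=m}Rist_G(v)'$ finish the lemma as you describe.

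In the forward direction, the claim that ``a just infinite virtually abelian group is forced to be virtually $\mathbb{Z}$'' is false, and your parenthetical justification is exactly where it fails: invariant subgroups of infinite index in $\mathbb{Z}^n$ correspond to proper $\mathbb{Q}$-invariant subspaces, and $\mathbb{Q}$-irreducible integral representations of finite groups in rank $\geq 2$ abound. For instance $\mathbb{Z}^2\rtimes C_4$ (rotation by order $4$), and more generally crystallographic groups with rationally irreducible point group, are just infinite, virtually abelian, and not virtually cyclic. With the correct picture your contradiction evaporates: $\mathbb{Z}^{n^m}$ is itself a direct product of $n^m$ infinite groups, so the branching decomposition of $Rist_G(m)$ is perfectly compatible with being virtually $\mathbb{Z}^{n^m}$. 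The edge case $Rist_G(m)'=1$ must instead be excluded by the tree action, and no just-infiniteness is needed: if $Rist_G(m)$ were abelian, pick $1\neq z\in Rist_G(m)$; then $z$ moves some vertex, hence moves some vertex $u$ of a level $k\geq m$. Since $Rist_G(k)$ has finite index in the infinite group $G$ and $G$ is level transitive, $Rist_G(u)\neq 1$; choose $1\neq h\in Rist_G(u)\leq Rist_G(k)\leq Rist_G(m)$. Commutativity of $Rist_G(m)$ forces $h=h^z$, while $h^z\in Rist_G(u^z)$ by Observation \ref{obs:conjugation}; as $u\neq u^z$ lie on the same level, $Rist_G(u)\cap Rist_G(u^z)=\{1\}$, so $h=1$, a contradiction. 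Thus no branch group has an abelian level rigid stabilizer, and the dichotomy in your first case ($Rist_G(m)'\neq 1$, hence of finite index in $G$ by just-infiniteness) completes the direction.
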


In \cite{BSZ12}, it is shown the $G_3/G_3''$ is an infinite group and so $G_3$ is not just infinite. For $n\geq 4$, the proof of Theorem \ref{thm:branchkernel} shows $\rist_{G_n}(m)/\rist_{G_n}(m)'$ is finite. Thus we obtain the following result.

\begin{theorem}
\label{thm:justinfinite}
$G_n$ is just infinite if and only if $n\neq 3$.
\end{theorem}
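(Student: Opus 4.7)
The plan is to prove this by invoking the Grigorchuk criterion cited just above the statement: a branch group $G$ is just infinite if and only if $Rist_G(m)'$ has finite index in $Rist_G(m)$ for every $m \geq 1$. The proof then naturally splits along the dichotomy $n = 3$ versus $n \geq 4$, and almost all of the work has already been done in the proof of Theorem \ref{thm:branchkernel}.

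For the reverse direction, that $G_3$ fails to be just infinite, I would simply invoke \cite{BSZ12}, where it is shown that $G_3/G_3''$ is infinite. This gives $G_3''$ as a normal subgroup of infinite index whose quotient is not finite, so $G_3$ has an infinite proper quotient and therefore is not just infinite. Alternatively, one could observe directly that the Grigorchuk criterion fails for $G_3$, since by \cite{BSZ12} the rigid stabilizers of $G_3$ have abelianization that is not finitely generated (which is the obstruction behind Proposition \ref{prop:elementinverse} failing at $n=3$, as remarked after that proposition).

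For the forward direction ($n \geq 4$ implies just infinite), I would apply the Grigorchuk criterion by verifying that $Rist_{G_n}(m)/Rist_{G_n}(m)'$ is finite for all $m$. This is exactly the content of what was established inside the proof of Theorem \ref{thm:branchkernel}: for $n=4$ and odd $n \geq 5$ we have $Rist_{G_n}(m) \cong K_n^{\,n^m}$, so its abelianization is $(K_n/K_n')^{n^m}$, and $K_n/K_n'$ is a finitely generated abelian group of bounded exponent (between $(n-1)$ and $2(n-1)$), hence finite. For even $n \geq 5$ we have $Rist_{G_n}(m) \cong G_n^{\,n^m}$ by Theorem \ref{thm:evensubgroups}, so the abelianization is $\bigl((\Z/(n-1)\Z)^n\bigr)^{n^m}$, which is obviously finite. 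In all three subcases the hypothesis of Grigorchuk's criterion holds.

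The statement is therefore essentially a corollary, and there is no real obstacle: all substantive computational content (the identification of the rigid stabilizers as $X^m * K_n$ or $X^m * G_n$, and the finiteness of $K_n/K_n'$) is already assembled. The only care needed is to cite the $G_3$ case correctly and to explicitly note that finiteness of $M_n = K_n/K_n'$ or $G_n/G_n'$, established in the course of proving Theorem \ref{thm:branchkernel}, immediately supplies what Grigorchuk's criterion requires.
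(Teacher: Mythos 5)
Your proposal is correct and follows essentially the same route as the paper: invoke Grigorchuk's criterion, cite \cite{BSZ12} for the fact that $G_3/G_3''$ is infinite to dispose of $n=3$, and use the finiteness of $Rist_{G_n}(m)/Rist_{G_n}(m)'$ already established in the proof of Theorem \ref{thm:branchkernel} for $n\geq 4$. No gaps.
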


\section{Maximal subgroups}\label{sec:finiteindex}
In this final section, we present examples to show that triviality of rigid kernel is not necessarily preserved when moving to subgroups of finite index, even if they are maximal. In doing so, we present new examples of branch groups with non-trivial rigid kernels, adding to the only currently known example of the Hanoi towers group.

\begin{theorem}
\label{thm:finiteindex}
For $n\geq 4$, let $d >2$ be such that $d \mid (n-1)$ and let $H_{n,d}$ be the set of elements $g$ of $G_n$ with $\epsilon(g)\equiv 0 \bmod d$. The $H_{n,d}$ is a subgroup of index $d$ in $G_n$ and is a regular branch group with non-trivial rigid kernel.
\end{theorem}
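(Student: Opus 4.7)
The plan is to handle the three claims in turn. The first is immediate: by Lemma \ref{lem:surject}, $\epsilon: G_n \twoheadrightarrow \mathbb{Z}/(n-1)\mathbb{Z}$ is a surjective homomorphism, and since $d$ divides $n-1$, composing with reduction modulo $d$ yields a surjection $G_n \twoheadrightarrow \mathbb{Z}/d\mathbb{Z}$ whose kernel is precisely $H_{n,d}$, so $H_{n,d}$ has index $d$.

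For the branch structure, I would exploit the containment $G_n' \leq H_{n,d}$, which holds because $\epsilon$ factors through the abelianization. By Theorem \ref{thm:branchingcommutator}, $G_n'$ is a branching subgroup of $G_n$, so $X^m * G_n'$ is contained in $G_n'$ (hence in $H_{n,d}$) and has finite index in $G_n$; this gives $Rist_{H_{n,d}}(m) \supseteq X^m * G_n'$ of finite index. For level transitivity, the first-level image of $G_n'$ equals the commutator subgroup of the first-level image of $G_n$, which by Lemma \ref{lem:altsym} is the Klein four-group $V_4$ when $n=4$ and $A_n$ when $n \geq 5$, both transitive on $X$; combined with $X * G_n' \leq G_n'$ an induction shows $G_n'$, and therefore $H_{n,d}$, is level transitive.

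For the rigid kernel, write $L_n$ for the maximal branching subgroup of $G_n$: $L_n = G_n$ when $n \geq 5$ is even by Theorem \ref{thm:evensubgroups}, and $L_n = K_n$ when $n=4$ or $n \geq 5$ is odd by Theorems \ref{thm:4max} and \ref{thm:oddrist}. Then $Rist_{G_n}(v) = v * L_n$ for each $v \in X$, and using $\epsilon(v*g) = \epsilon(g)$ (Lemma \ref{lem:epsilonequiv}) one computes
\[
Rist_{H_{n,d}}(v) = v * (L_n \cap H_{n,d}),
\]
so $Rist_{H_{n,d}}(1) = X * (L_n \cap H_{n,d})$ as an internal direct product. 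To conclude that the rigid kernel is non-trivial, it suffices to produce, for every $k \geq 2$, an element of $Stab_{H_{n,d}}(k)$ not in $Rist_{H_{n,d}}(1)$. My construction is
\[
h = (v_1 * g_1)(v_2 * g_2)
\]
for distinct $v_1, v_2 \in X$ and $g_1, g_2 \in L_n \cap Stab_{G_n}(k-1)$ chosen so that $\epsilon(g_1) + \epsilon(g_2) \equiv 0 \bmod d$ and $\epsilon(g_i) \not\equiv 0 \bmod d$ for $i=1,2$; the first condition forces $h \in H_{n,d} \cap Stab_{G_n}(k) = Stab_{H_{n,d}}(k)$, while the second obstructs the direct product decomposition required of elements of $X * (L_n \cap H_{n,d})$.

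The main obstacle is the existence of such $g_1, g_2$, which reduces to understanding the image of $\epsilon$ on $Stab_{G_n}(k-1)$. Using $Stab_{G_n}(k-1) = X^{k-2} * Stab_{G_n}(1)$ from Theorems \ref{thm:4max}, \ref{thm:oddrist}, and \ref{thm:evensubgroups}, one has $\epsilon(Stab_{G_n}(k-1)) = \mathbb{Z}/(n-1)\mathbb{Z}$ when $n = 4$ or $n \geq 5$ is even, so $\epsilon(g_1) = 1$ and $\epsilon(g_2) = -1$ work immediately; when $n \geq 5$ is odd, Proposition \ref{prop:oddstab1} forces the image to lie in $2\mathbb{Z}/(n-1)\mathbb{Z}$, requiring a short case split on the parity of $d$: for $d$ even take $\epsilon(g_1) = 2$ and $\epsilon(g_2) = d - 2$; for $d$ odd, note $2d \mid (n-1)$ and take $\epsilon(g_1) = 2$, $\epsilon(g_2) = 2d - 2$. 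In every case the hypothesis $d \geq 3$ prevents the required residues from collapsing to zero modulo $d$.
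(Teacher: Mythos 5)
Your proposal is correct and follows essentially the same route as the paper: both arguments rest on the observation that an element of $Rist_{H_{n,d}}(1)$ must have each first-level state individually satisfying $\epsilon\equiv 0\bmod d$, while membership in $Stab_{H_{n,d}}(k)$ only constrains the sum of the states' $\epsilon$-values, and both exhibit witnesses of the form (a deep element with $\epsilon\not\equiv 0\bmod d$) paired against its inverse in another coordinate --- the paper's explicit choice being $\beta^{\pm 2}$ (or $\beta^{\pm 1}$) for $\beta=a_1\cdots a_n$ pushed down to level $m$, which is an instance of your generic $g_1,g_2$. Your write-up additionally spells out the verification that $H_{n,d}$ is a branch group via $G_n'\leq H_{n,d}$, a point the paper's proof leaves implicit.
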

\begin{proof}
First observe that $H_{n,d}$ contains $G_n'$ and thus also contains $X^m*G_n'$ for any $m$. Since $G_n'$ acts as $A_n$ on the top level, $H_{n,d}$ is level transitive. Further, since $X^m*G_n'$ has finite index in $G_n$ it also has finite index in $H_{n,d}$ and we conclude that $H_{n,d}$ is a regular branch group.

For any $n$ and $d$ as in the theorem, $H_{n,d}$ has index $d$ in $G_n$ by Lemma \ref{lem:surject}. We will construct explicit elements that are in $\Stab_{H_{n,d}}(m)$ but not in $\rist_{H_{n,d}}(k)$ for all $k \leq m$. Let $\beta=a_1a_2\cdots a_n$.

If $n\geq 4$ is odd, then 
\[\beta=(a_1a_3\cdots a_n, 1, \dots, 1, a_2a_4\cdots a_{n-1})(1,n)\]
and 
\[\beta^2=(a_1a_3\cdots a_na_2a_4\cdots a_{n-1},1,\dots, 1, a_2a_4\cdots a_{n-1}a_1a_3\cdots a_n)_1.\] 
Clearly, $\beta^2$ has exponent sum $2n$ and is not an element of $H_{n,d}$. But $H_{n,d}$ does contains $G_n'$ and therefore also $X*G_n'$ and all elements of the form $(g,1, \dots, 1, g^{-1})_1$ for $g\in G_n$. Combining these elements we get that
$\beta^2 \equiv (\beta^2, 1, \dots ,1)_1 \bmod H_{n,d}$ and so likewise $(\beta^2, 1, \dots, 1)_1$ is not contained in $H_{n,d}$. Inductively we get for any $m$, 
\[\beta^2 \equiv (\beta^2, 1, \dots, 1)_m \bmod H_{n,d}\]
and so $(\beta^2, 1, \dots, 1)_m$ is not contained in $H_{n,d}$.

But again, since $H_{n,d}$ contains all elements of the form $(g, 1, \dots, 1, g^{-1})_1$, the element $(\beta^2, 1, \dots, 1, \beta^{-2})_1 \in \Stab_{H_{n,d}}(1)$ and again inductively for all $m$,  $(\beta^2, 1, \dots, 1, \beta^{-2})_m \in \Stab_{H_{n,d}}(m)$. But $(\beta^2, 1, \dots, 1, \beta^{-2})_m \notin \rist_{H_{n,d}}(k)$ for any $k$, otherwise  $(\beta^2, 1, \dots, 1)_{m-k}$ would be in the group $H_{n,d}$, a contradiction.

Now if $n\geq 4$ is even, then
\[\beta=(a_1a_3\cdots a_{n-1}, 1, \dots, 1, a_2a_4\cdots a_n)_1\]
and so by the same discussion above $\beta \notin H_{n,d}$ and for all $m$ 
\[\beta \equiv (\beta, 1, \dots, 1)_m \bmod H_{n,d}\]
so $(\beta, 1, \dots, 1)_m$ is not an element of $H_{n,d}$ but $(\beta, 1, \dots, 1, \beta^{-1})_m$ is. The same arguments show that $(\beta, 1, \dots, 1, \beta^{-1})_m$ is not in $\rist_{H_{n,d}}(k)$ for any $k$.
\end{proof}

\section{Hausdorff Dimension}\label{sec:haus}
For a closed subgroup $H$ of $\Aut(\mathcal{T})$, the Hausdorff dimension of $H$ can be calculated \cite{bs97} by
\begin{equation}
\label{eq:haus}
dim_{\mathrm{H}}(H)=\liminf_{m\rightarrow \infty} \frac{\log|H/\Stab_H(m)|}{\log |\Aut(\mathcal{T})/\Stab_{\Aut(\mathcal{T})}(m)|}. 
\end{equation}
Ab\'ert and Vir\'ag showed that with probability 1 the closure of the subgroup generated by three random automorphisms of a binary tree has Hausdorff dimension 1 \cite{av05}. Siegenthaler then constructed the first explicit examples of topologically finitely generated groups of Hausdorff dimension 1 \cite{sie08}.

As a consequence of the work in previous sections, we show that $\overline{G_n}$ has Hausdorff dimension arbitrarily close to 1. 
\begin{theorem}
\label{thm:haus}
For $n\geq 3$, the Hausdorff dimension for $\overline{G_n}$ is
\[
  dim_{\mathrm{H}}(\overline{G_n}) =
  \begin{cases}
  	   \vspace{.2cm}
       1-\frac{\log(48)}{\log(331776)} & \text{if $n=4$} \\
       \vspace{.2cm}
       1-\frac{\log(2)}{\log(n!)} & \text{if $n\geq 5$ is even} \\
       1-\frac{\log(2)}{n\log(n!)} & \text{if $n$ is odd}
  \end{cases}
\]
\end{theorem}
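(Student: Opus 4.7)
The plan is to apply formula~\eqref{eq:haus} directly. Since $G_n$ is residually finite with each $Stab_{G_n}(m)$ of finite index, the quotient $\overline{G_n}/Stab_{\overline{G_n}}(m)$ coincides with $G_n/Stab_{G_n}(m)$, so it suffices to compute the indices $|G_n:Stab_{G_n}(m)|$; the denominator is the classical $|Aut(\mathcal{T}):Stab_{Aut(\mathcal{T})}(m)| = (n!)^{(n^m-1)/(n-1)}$. All three cases then reduce to quantifying how many constraints the group imposes at each level.

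For even $n \geq 5$, Theorem~\ref{thm:evensubgroups} gives $Stab_{G_n}(m) = X^m * G_n$, so $G_n/Stab_{G_n}(m)$ is the $m$-fold iterated wreath power of $A_n$, of order $(n!/2)^{(n^m-1)/(n-1)}$. The ratio is independent of $m$ and equals $1 - \log 2/\log(n!)$.

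For odd $n \geq 5$, Proposition~\ref{prop:oddstab1} identifies the image of $Stab_{G_n}(1)$ in $S_n^n$ under the first-level state map as the index-$2$ subgroup cut out by $\prod_i \mathrm{sign}(h_i) = 1$ (using that $\epsilon \bmod 2$ coincides with sign composed with the level-$1$ action, since each $\sigma_i$ is an $(n-1)$-cycle of odd parity). Combined with $Stab_{G_n}(m+1) = X^m * Stab_{G_n}(1)$ from Theorem~\ref{thm:oddrist}, this gives $|Stab_{G_n}(m):Stab_{G_n}(m+1)| = ((n!)^n/2)^{n^{m-1}}$ for $m \geq 1$; telescoping with $|G_n:Stab_{G_n}(1)| = n!$ yields $|G_n:Stab_{G_n}(m)| = (n!)^{(n^m-1)/(n-1)}/2^{(n^{m-1}-1)/(n-1)}$, and the ratio tends to $1 - \log 2/(n\log(n!))$.

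The main work is the $n=4$ case. Let $Q_1 \leq A_4^4$ be the image of $Stab_{G_4}(1)$ under the first-level state map composed with the surjection $G_4 \twoheadrightarrow A_4$; by Theorem~\ref{thm:4max} one has $|Stab_{G_4}(m):Stab_{G_4}(m+1)| = |Q_1|^{4^{m-1}}$ for $m \geq 1$, so the problem reduces to computing $|Q_1|$. Post-composing with $A_4 \twoheadrightarrow A_4/V_4 \cong \mathbb{Z}/3\mathbb{Z}$ produces a homomorphism $\eta: G_4 \to \mathbb{Z}/3\mathbb{Z}$, and mimicking Lemma~\ref{lem:epsilonequiv} the discrepancy $g \mapsto \eta(g) - \sum_{i=1}^4 \eta(g_i)$ is a homomorphism $G_4 \to \mathbb{Z}/3\mathbb{Z}$ that vanishes on every generator $a_i$ and hence on all of $G_4$. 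This forces $Q_1$ into the index-$3$ subgroup of $A_4^4$ cut out by $\sum_i \overline{\eta}(h_i) \equiv 0 \bmod 3$, of order $6912$. For the reverse containment, $X*G_4' \leq Stab_{G_4}(1)$ contributes all of $V_4^4$ to $Q_1$ (since $G_4 \twoheadrightarrow A_4$ passes to $G_4' \twoheadrightarrow V_4$), and the elements of $I_4$ of the form $(h,1,1,h^{-1})_1$ (in any two coordinates, as $h$ varies in $G_4$) project onto a generating set for the sum-zero submodule of $(\mathbb{Z}/3\mathbb{Z})^4$ modulo $V_4^4$. Hence $|Q_1| = 6912$, giving $|G_4:Stab_{G_4}(m)| = 12 \cdot 6912^{(4^{m-1}-1)/3}$, and the ratio tends to $\log 6912/(4\log 24)$, which equals $1 - \log 48/\log 331776$ by a short algebraic manipulation. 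The only non-routine step is pinning down the order of $Q_1$; the rest follows from Theorems~\ref{thm:4max}, \ref{thm:oddrist}, and \ref{thm:evensubgroups}.
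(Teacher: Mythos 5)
Your proposal follows the same route as the paper: apply the Barnea--Shalev formula and compute $|G_n:Stab_{G_n}(m)|$ level by level using Theorems \ref{thm:4max}, \ref{thm:oddrist}, and \ref{thm:evensubgroups}, and your index computations and limits all check out ($6912 = 24^4/48$, so $\log 6912/(4\log 24) = 1-\log 48/\log 331776$, etc.). The one place you go beyond the paper is the $n=4$ case: where the paper simply asserts that the index-$3$ claim for the image of $Stab_{G_4}(1)$ in $A_4^4$ "can easily be checked from the generators of Proposition \ref{prop:stab4.1}," you give an actual argument --- the upper bound via the vanishing of the discrepancy homomorphism $g\mapsto \eta(g)-\sum_i\eta(g_i)$ built from $A_4\twoheadrightarrow A_4/V_4$ (the exact analogue of Lemma \ref{lem:epsilonequiv}), and the lower bound from $X*G_4'$ and $I_4$. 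That is a genuine improvement in rigor and is correct.

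The only gap is that the third branch of the statement covers all odd $n\geq 3$, but your argument for the odd case invokes Proposition \ref{prop:oddstab1} and Theorem \ref{thm:oddrist}, which are stated only for odd $n\geq 5$; in particular $Stab_{G_n}(m+1)=X^m*Stab_{G_n}(1)$ is not available to you for $n=3$ from those results. The paper covers $n=3$ by citing Lemmas 5.7 and 5.8 of \cite{Ski16} for $|Stab_{G_3}(1)/Stab_{G_3}(2)|=\tfrac{6^3}{2}$ and for the stability $|Stab_{G_3}(m-1)/Stab_{G_3}(m)|=|Stab_{G_3}(1)/Stab_{G_3}(2)|^{3^{m-2}}$. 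You should either restrict your claim to $n\geq 4$ or supply the $n=3$ input separately; everything else stands.
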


\begin{proof}
For $n=4$, $|G_4/\Stab_{G_4}(1)|=|A_4|=\frac{4!}{2}$ by Lemma \ref{lem:altsym}. It can easily be checked from the generators of Proposition \ref{prop:stab4.1} that $\Stab_{G_4}(1)/\Stab_{G_4}(2)$ is an index 3 subgroup of $A_4\times A_4 \times A_4 \times A_4$, so $|\Stab_{G_4}(1)/\Stab_{G_4}(2)|=\frac{4!^4}{3\cdot 2^4}$. For $m\geq 2$, $|\Stab_{G_4}(m-1)/\Stab_{G_4}(m)|=|\Stab_{G_4}(1)/\Stab_{G_4}(2)|^{4^{m-2}}$ by Theorem \ref{thm:4max}. Hence equation \ref{eq:haus} yields

\begin{align*}
dim_{\mathrm{H}}(\overline{G_4})  &=\liminf_{m\rightarrow \infty} \frac{\log\big(\frac{4!^{1+4+\cdots + 4^{m-1}}}{ 2^{1+4+\cdots 4^{m-1}}3^{1+4+\cdots 4^{m-2}}}\big)}{\log(4!^{1+4+\cdots 4^{m-1}})}\\
 &=\liminf_{m\rightarrow \infty} \frac{\log(4!^{\frac{4^m-1}{3}})-\log(2^{\frac{4^m-1}{3}})-\log(3^{\frac{4^{m-1}-1}{3}})}{\log(4!^{\frac{4^m-1}{3}})}\\
 &=\liminf_{m\rightarrow\infty} 1-\frac{\log(2)}{\log(4!)}-\frac{(4^{m-1}-1)\log(3)}{(4^m-1)\log(4!)}\\
 &=1-\frac{\log(2)}{\log(4!)}-\frac{\log(3)}{4\log(4!)}\\
 &=1-\frac{\log(48)}{\log(331776)}.
\end{align*}

For even $n\geq 5$, $G_n/\Stab_{G_n}(1)=A_n$ and $\Stab_{G_n}(m-1)/\Stab_{G_n}(m)=(A_n)^{n^{m-1}}$ by Lemma \ref{lem:altsym} and Theorem \ref{thm:evensubgroups}. Therefore

\begin{align*}
dim_{\mathrm{H}}(\overline{G_n})  &= \frac{\log\big(\frac{n!^{1+n+\cdots n^{m-1}}}{2^{1+n+\cdots n^{m-1}}}\big)}{\log(n!^{1+n+\cdots n^{m-1}})}\\
    &=\liminf_{m\rightarrow \infty} \frac{\log((n!)^{\frac{n^m-1}{n-1}})-\log(2^{\frac{n^m-1}{n-1}})}{\log(n!^{\frac{n^m-1}{n-1}})}\\
    &=1-\frac{\log(2)}{\log(n!)}.
\end{align*}

Finally, when $n$ is odd $|G_n/\Stab_{G_n}(1)|=|S_n|=n!$ by Lemma \ref{lem:altsym}. Additionally, $|\Stab_{G_n}(1)/\Stab_{G_n}(2)|=\frac{n!^n}{2}$ by Proposition \ref{prop:oddstab1} and Lemma 5.7 in \cite{Ski19}. Moreover, $|\Stab_{G_n}(m-1)/\Stab_{G_n}(m)|=|\Stab_{G_n}(1)/\Stab_{G_n}(2)|^{4^{m-2}}=\frac{n!^{n^{m-1}}}{2^{n^{m-2}}}$ by Theorem \ref{thm:evensubgroups} and Lemma 5.8 in \cite{Ski19}. Thus

\begin{align*}
dim_{\mathrm{H}}(\overline{G_n})  &=\liminf_{m \rightarrow \infty} \frac{\log\big(\frac{n!^{1+n+\cdots n^{m-1}}}{2^{1+n+\cdots n^{m-2}}}\big)}{\log(n!^{1+n+\cdots n^{m-1}})}\\
   &=\liminf_{m\rightarrow \infty} \frac{\log(n!^{\frac{n^m-1}{n-1}})-\log(2^{\frac{n^{m-1}-1}{n-1}})}{\log(n!^{\frac{n^m-1}{n-1}})}\\
   &=\liminf_{m\rightarrow \infty} 1- \frac{(n^{m-1}-1)\log(2)}{(n^m-1)\log(n!)}\\
   &=1-\frac{\log(2)}{n\log(n)}.
\end{align*}

\end{proof}

\begin{corollary}
For all $\epsilon >0$, there exists $n$ such that $dim_{\mathrm{H}}(\overline{G_n})>1-\epsilon$.
\end{corollary}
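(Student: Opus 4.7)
The plan is to read off the asymptotics directly from the formulas established in Theorem \ref{thm:haus}. Given $\epsilon > 0$, we only need to find a single $n$ (in either the even or odd branch of the formula) for which the corresponding error term is smaller than $\epsilon$, since in both cases the Hausdorff dimension is of the form $1$ minus a positive quantity.

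First I would restrict to the even case $n \geq 5$, where Theorem \ref{thm:haus} gives
\[
\dim_{\mathrm{H}}(\overline{G_n}) = 1 - \frac{\log 2}{\log(n!)}.
\]
Since $\log(n!) \geq \log n \to \infty$ as $n \to \infty$ (one can invoke Stirling's approximation $\log(n!) \sim n\log n$, but the trivial lower bound $n! \geq n$ already suffices), the subtracted quantity $\frac{\log 2}{\log(n!)}$ tends to $0$. Hence one can choose $n_0 \geq 5$ even with $\log(n_0!) > \frac{\log 2}{\epsilon}$, and then
\[
\dim_{\mathrm{H}}(\overline{G_{n_0}}) = 1 - \frac{\log 2}{\log(n_0!)} > 1 - \epsilon,
\]
which proves the corollary. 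The odd-$n$ formula $1 - \frac{\log 2}{n \log(n!)}$ gives the same conclusion (in fact converging faster to $1$), so either parity works.

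There is no real obstacle here: the work was all done in Theorem \ref{thm:haus}, and the corollary is just the observation that the explicit denominators $\log(n!)$ and $n\log(n!)$ diverge. The only thing to be careful about is to avoid the $n = 4$ case, whose dimension is a specific constant strictly less than $1$ and thus not useful for letting $\epsilon \to 0$; restricting to even $n \geq 5$ (or to odd $n \geq 3$) sidesteps this.
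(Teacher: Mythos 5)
Your proposal is correct and matches the paper's intent exactly: the paper states this corollary without proof precisely because it follows immediately from the formulas in Theorem \ref{thm:haus}, where the error terms $\frac{\log 2}{\log(n!)}$ and $\frac{\log 2}{n\log(n!)}$ tend to $0$ as $n\to\infty$. Your care in noting that one should let $n$ grow (rather than rely on the fixed $n=4$ value) is the only substantive point, and you handled it correctly.
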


\bibliographystyle{alpha}

\end{document}